    \documentclass[11pt]{article}
    \usepackage[margin=1in, a4paper]{geometry}
    \usepackage{amsthm,amsfonts,amsmath,amssymb,bbm}
    \usepackage{mleftright}
    \usepackage{etoolbox}
    \usepackage[dvipsnames]{xcolor}
    \usepackage{hyperref}
    \usepackage{graphicx}
    \usepackage{authblk}
    
    \hypersetup{
        colorlinks=true,
        linkcolor=blue,
        urlcolor=blue,
        citecolor=blue
    }
    
    \makeatletter
    \let\ams@starttoc\@starttoc
    \makeatother
    \usepackage[parfill]{parskip}
    \makeatletter
    \let\@starttoc\ams@starttoc
    \patchcmd{\@starttoc}{\makeatletter}{\makeatletter\parskip\z@}{}{}
    \makeatother
    
    \usepackage[numbers,square]{natbib}
    \usepackage{enumerate}
    
    \numberwithin{equation}{section}
    
    \newtheorem{theorem}{Theorem}[section]
    \newtheorem{proposition}[theorem]{Proposition}
    \newtheorem{lemma}[theorem]{Lemma}
    \newtheorem{corollary}[theorem]{Corollary}
    
    \theoremstyle{definition}

    \newtheorem{remark}[theorem]{Remark}
    
    \newcommand{\re}{\textnormal{Re}}   
    \newcommand{\lb}{\mleft(}
    \newcommand{\rb}{\mright)}
    \newcommand{\lbrb}[1]{\lb #1 \rb}
        \newcommand{\gammaEuler}{\gamma_{\mathrm E}}

    \newcommand{\Eb}{\mathsf E}
    \renewcommand{\P}{\mathsf P}
    
    \newcommand{\Cb}{\mathbb C}
    \newcommand{\Nb}{\mathbb N}
    \newcommand{\Rb}{\mathbb R}
    \newcommand{\ind}[1]{\mathbbm{1}_{\mleft\{#1\mright\}}}
    
    \newcommand{\Var}{\operatorname{Var}}
    \newcommand{\Cov}{\operatorname{Cov}}
    \newcommand{\D}{\mathrm{d}}
    \newcommand{\Pois}{\operatorname{Poisson}}
    \newcommand{\Gumbel}{\operatorname{Gumbel}}
    \newcommand{\bo}{\mathrm{O}}
\newcommand{\so}{\mathrm{o}}
    
        \renewcommand{\Re}{\operatorname{Re}}
        \renewcommand{\Im}{\operatorname{Im}}
        
    \newenvironment{enumeratei}{\begin{enumerate}[(i)]}{\end{enumerate}}

\begin{document}
    \title{Asymptotics for Beta-Splitting Trees via Homogeneous
    Fragmentations and Meromorphic Potential Theory}
    
    \author{
    Yoana R. Chorbadzhiyska$^{*}$,
    Martin Minchev$^{\dagger, *}$,
    and Mladen Savov$^{*,\ddagger}$
    }   
    
    \date{}
    \maketitle
    \thispagestyle{empty}
    
    \begingroup
    \renewcommand{\thefootnote}{\fnsymbol{footnote}}
    \footnotetext[1]{Faculty of Mathematics and Informatics,
    Sofia University ``St. Kliment Ohridski'', Bulgaria.}
    \footnotetext[2]{Institute of Mathematics,
    University of Zurich, Switzerland.}
    \footnotetext[3]{Institute of Mathematics and Informatics,
    Bulgarian Academy of Sciences, Bulgaria.}
    \endgroup
    
    \renewcommand{\thefootnote}{\arabic{footnote}}
    \setcounter{footnote}{0}
    
    \begin{abstract}
    Inspired by recent work of Aldous, Janson, and Pittel on the critical
    beta-splitting model, we study the full beta-splitting family for
    $\beta>-2$ by employing a canonical continuous-time embedding into a homogeneous
    exchangeable fragmentation. In this representation, the asymptotic frequency
    of a tagged fragment is $e^{-\xi_t}$, where $\xi$ is a subordinator with
    Laplace exponent
    \[
            \phi_\beta(z)
            =
            \int_0^1(1-s^z)s^{\beta+1}(1-s)^\beta\D s.
    \]
    We express the continuous height of a typical leaf, its occupation
    probabilities, the discrete height, and the total continuous-time length in
    terms of the potential measure of $\xi$. Renewal theory yields first-order
    asymptotics and a central limit theorem for the continuous-time height. Next, a regenerative-composition representation gives Gaussian limits for the discrete
height above and at the critical value, and a non-Gaussian power-law limit below it. 

We obtain residue
    expansions for the potential measure of the subordinator $\xi$, which transfers to the mean continuous height of the tree. For the case
    $-2<\beta\leq0$, this is facilitated by the property that $\xi$ belongs to the family of {meromorphic} subordinators, and for $\beta>0$, we employ the theory of generalised
    Nevanlinna functions, augmented by the specific structure of these subordinators, which can be roughly viewed as extended meromorphic subordinators. 
    
    We finally investigate the maximum continuous-time height $D_n^*$.  Combining
    additive martingales for homogeneous fragmentations with a conditional
    Poisson approximation of sums of weakly dependent Bernoulli variables that account for collisions, we prove
    \[
            \frac{D_n^*}{\log n}
            \longrightarrow
            \frac{2}{\phi_\beta(1)},
            \qquad\text{in probability},
    \]
    and establish a mixed Gumbel limit for
    $\phi_\beta(1)D_n^*-2\log n$. At the critical value $\beta=-1$, this
    identifies the limiting constant as $2$ and resolves an open problem of
    Aldous and Janson.

    \end{abstract}
    
    {\small
    \noindent\textbf{Keywords:} beta-splitting trees; exchangeable fragmentations;
    meromorphic subordinators.
    
    \noindent\textbf{MSC2020 Classification:} Primary 60J80; Secondary 60G51,
    60C05, 60F05, 44A10.
    }
    \tableofcontents
    \section{Introduction}
    
    The beta-splitting model was introduced in \cite{Aldous-1996} as a
    one-parameter family of distributions on rooted binary cladograms. In phylogenetics, a cladogram is a rooted tree whose leaves represent species,
    or other biological groups, and whose branches record their successive
    separation from common ancestors. A group consisting of a common ancestor and
    all its descendants is called a \emph{clade}, from the Greek word \emph{klados}, meaning ``branch''. Such parametric models make it
    possible to compare the balance of observed tree shapes with predictions from
    theoretical models.
    
    In the beta-splitting model, a clade containing $m$ leaves is split into two
    daughter clades containing $i$ and $m-i$ leaves, with a probability depending
    on a parameter $\beta$. This parameter controls the balance of the resulting
    tree: as $\beta\downarrow-2$, the trees become increasingly unbalanced and
    approach the comb; $\beta=-3/2$ gives the
    \emph{proportional to distinguishable arrangements} (PDA) model;
    $\beta=0$ gives the Yule, or
    \emph{equal-rates Markov} (ERM), model; and increasing $\beta$ favours more
    balanced splits. We refer to
    \cite{Aldous-2001-Phylogenetic,Sainudiin-Veber-2016} for background and
    biological interpretations of these models.
    
    In a recent series of papers, Aldous and Pittel
    \cite{Aldous-Pittel-2025} and Aldous and Janson
    \cite{beta2-arxiv,Aldous-Janson-Exchangeable,
    Aldous-Janson-2025-Mellin} studied the model at the critical value
    $\beta=-1$. Background, related quantities, and many open questions are
    discussed in the survey \cite{beta2-arxiv}. This critical model can be
    described briefly as follows:
    \begin{itemize}
        \item Start with $m$ labels in a single block.
        \item Split a block of size $m$ into blocks of sizes $i$ and $m-i$ with
              probability proportional to $1/(i(m-i))$.
        \item Repeat recursively until all blocks are singletons.
    \end{itemize}
    The resulting singletons are the leaves of the tree. This model has a natural embedding in continuous time through the rule
    \begin{equation}
        \label{eq: rule critical}
    \text{split independently a block of size $m$ after an
    $\operatorname{Exp}(h_{m-1})$ time, where
    $h_m:=\sum_{r=1}^m\frac1r$.}
    \end{equation}
    We refer to quantities of the discrete model as \textit{discrete} or
    \textit{discrete-time} and for the embedded one as \textit{continuous}
    or \textit{continuous-time}.
    
    A basic question is to understand the behaviour of a uniformly chosen leaf,
    for example its continuous-time height, its discrete-time height, their moments and
    central limit theorems, and the expected total length of the tree. Aldous and
    Pittel \cite{Aldous-Pittel-2025} study several of these questions through
    recurrences. Aldous and Janson
    \cite{Aldous-Janson-2025-Mellin} obtain sharper results in the critical case
    through a Mellin-transform analysis of the function
    \[
            z\mapsto\psi(z+1)-\psi(1),
            \qquad
            \text{where}\qquad
            \psi(z):=(\log\Gamma(z))'
    \]
    is the digamma function.
    
    In this work, we use the framework of homogeneous exchangeable
    fragmentations to study these questions for all admissible
    $\beta>-2$. More precisely, the canonical continuous-time beta-splitting
    tree on $n$ leaves is the restriction to $[n]$ of a homogeneous
    fragmentation of $\Nb$, and the frequency of a tagged fragment is described
    by a canonical subordinator.
    
    The connection between Markov branching trees and fragmentation processes
    has been studied before. In particular,
    \cite{Haas-Miermont-Pitman-Winkel-2008} relate sampling-consistent Markov
    branching trees to dislocation measures and obtain continuum-tree scaling
    limits for discrete fragmentation trees, including beta-splitting models.
    Our use of fragmentation is different: rather than taking a scaling limit of
    the discrete tree, we use the homogeneous fragmentation as an exact
    continuous-time representation of every finite beta-splitting tree.
    
    Let us briefly sketch this connection; see
    Section~\ref{sec:exchangeable} for a more thorough treatment. Start with an
    interval fragmentation $I(t)$ of $[0,1]$, in which different intervals split
    independently at rates described by a dislocation measure $\nu(\D x)$ on
    $[1/2,1)$, where $x$ denotes the larger relative mass. Let
    $U_1,U_2,\ldots$ be iid uniform random variables on $(0,1)$. At time $t$,
    place two labels $i$ and $j$ in the same block when $U_i$ and $U_j$ belong to
    the same interval of $I(t)$. This gives an exchangeable partition
    $\Pi(t)$ of $\Nb$. Its restriction
    \[
            \Pi_n(t):=\Pi(t)_{|[n]}
    \]
    is a fragmentation process on the partitions of $[n]$.
    The process $\Pi_n$ is the continuous-time embedding of a Markov branching
    scheme whose split probabilities satisfy
    \[
            q(n,i)
            \propto
            \binom ni
            \int_0^1x^i(1-x)^{n-i}\nu(\D x),
            \qquad
            \text{for }1\le i\le n-1.
    \]
    The corresponding continuous-time rule is
    \begin{equation}
        \label{eq: rule general}
    \text{split independently a block of size $m$ after an
    $\operatorname{Exp}(\phi_\nu(m-1))$ time,}
    \end{equation}
    where
    \[
            \phi_\nu(k)
            :=
            \int_{1/2}^1
            \left[1-x^{k+1}-(1-x)^{k+1}\right]\nu(\D x).
    \]
    
    Taking
    \[
            \nu_\beta(\D x)
            =
            x^\beta(1-x)^\beta\D x,
            \qquad \text{on }x\in[1/2,1),
            \qquad
            \text{for }\beta>-2,
    \]
    we recover the beta-splitting model from \cite{Aldous-1996}. After a change
    of variables, the exponent of the tagged fragment becomes
    \begin{equation}\label{eq:phi-beta-intro}
            \phi_\beta(z)
            =
            \int_0^1
            (1-s^z)s^{\beta+1}(1-s)^\beta\D s.
    \end{equation}
    For $\beta>-1$, this can be written using the beta function
    \[
            B(z_1,z_2)
            :=
            \int_0^1s^{z_1-1}(1-s)^{z_2-1}\D s
            =
            \frac{\Gamma(z_1)\Gamma(z_2)}
            {\Gamma(z_1+z_2)},
            \qquad
            \text{for }\Re z_1>0\text{ and }\Re z_2>0,
    \]
    as
    \[
            \phi_\beta(z)
            =
            B(\beta+2,\beta+1)
            -
            B(z+\beta+2,\beta+1).
    \]
    For $\beta\in(-2,-1)$, the same identity is understood through analytic
    continuation of the beta function.
    
    At the critical value $\beta=-1$, a direct calculation gives
    \[
            \phi_{-1}(z)
            =
            \psi(z+1)-\psi(1),
            \qquad\text{and}\qquad
            \phi_{-1}(k)=h_k.
    \]
    In particular,
    \[
            \phi_{-1}(m-1)=h_{m-1},
    \]
    so \eqref{eq: rule general} reduces exactly to
    \eqref{eq: rule critical}. Thus the continuous-time rule introduced in the
    critical model is the specification of the general fragmentation
    construction.
    
    The value $\beta=-1$ is called \emph{critical} because it separates
    different regimes: first,
    \[
            \nu_\beta([1/2,1))<\infty
            \qquad\text{if and only if}\qquad
            \beta>-1,
    \]
    so $\beta=-1$ is the boundary between finite and infinite dislocation
    activity. Second, the same transition appears in the expected discrete-time height of a
    typical leaf $L_n$:
    \[
            \Eb L_n\asymp\log n
            \quad(\beta>-1),
            \qquad
            \Eb L_n\asymp(\log n)^2
            \quad(\beta=-1),
            \qquad
            \Eb L_n\asymp n^{-\beta-1}
            \quad(-2<\beta<-1).
    \]
    These three orders already appear in the original analysis of
    \cite[Proposition~4]{Aldous-1996}. We recover them through the tagged
    subordinator and give the corresponding exact constants in
    Corollary~\ref{cor:discrete-height-phase-results}. Figure~\ref{fig:beta-splitting-trees} illustrates the effect of $\beta$ on
    the continuous-time tree and on its discrete shape.
    
    \begin{figure}[t]
    \centering
    \includegraphics[width=\textwidth]{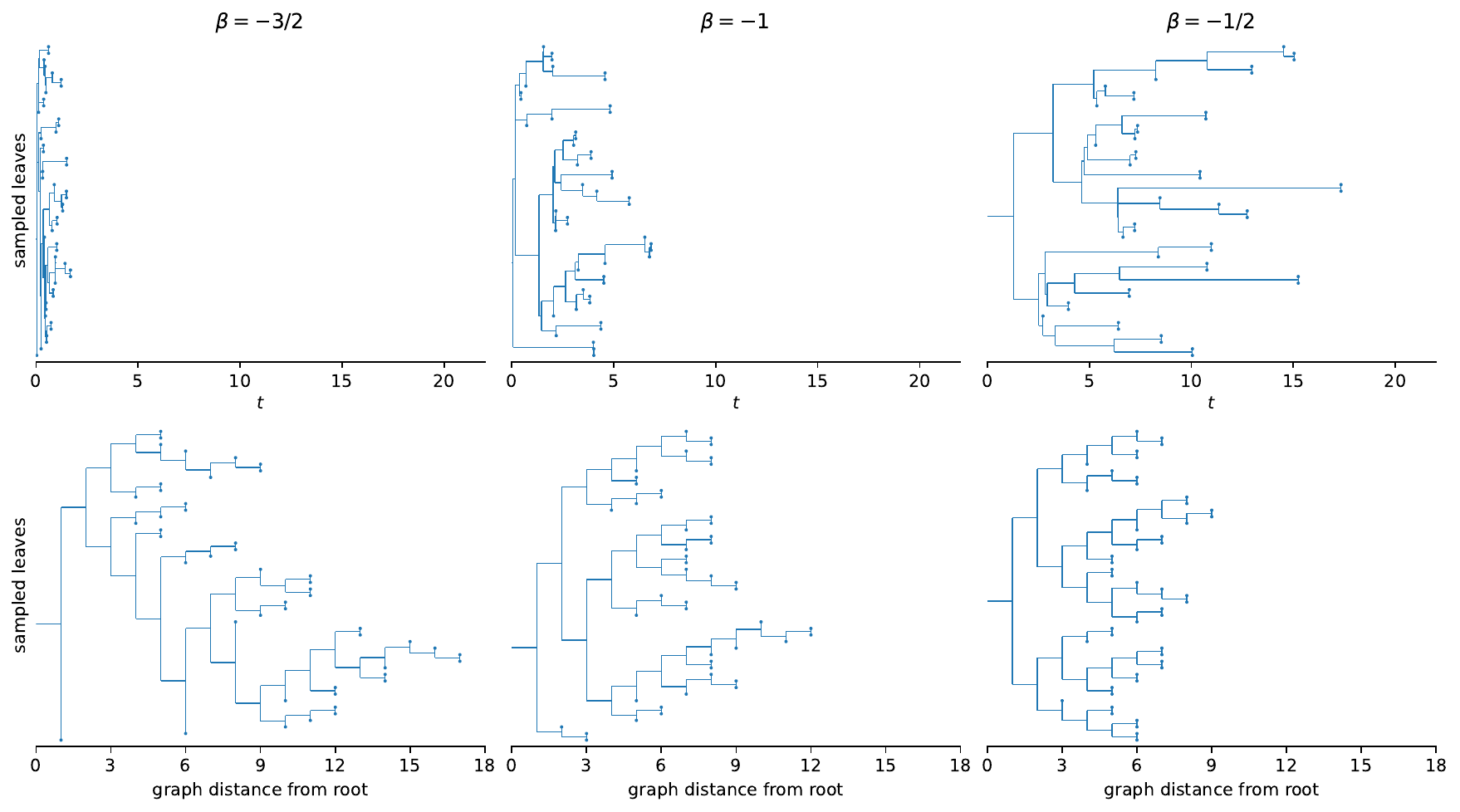}
    \caption{Simulations of beta-splitting trees with $n=48$ leaves for
    $\beta=-3/2$, $\beta=-1$, and $\beta=-1/2$. The top row shows the canonical
    continuous-time trees on the common time range $t\in[0,22]$. The bottom row
    shows the same realizations with unit edge lengths, and hence only their
    discrete tree shapes.}
    \label{fig:beta-splitting-trees}
    \end{figure}
    
   By exchangeability, $\Eb L_n$ is also the expected number of splits along
the spine leading to label~$1$. The number of labels in the block containing
label~$1$ evolves as a decreasing Markov chain. At $\beta=-1$, this is the
\textit{harmonic descent chain} introduced in
\cite{Aldous-Janson-Li-2024}; see also
\cite[Section~3.1]{beta2-arxiv}. The name refers to the harmonic numbers
appearing in its transition probabilities in \eqref{eq: rule critical}.
The successive numbers of labels which leave the tagged block form a
composition of $n-1$, that is, an ordered sequence of positive integers with
sum $n-1$. Gnedin and Pitman introduced the class of
\textit{regenerative composition structures} in
\cite{Gnedin-Pitman-2005}. Iksanov
\cite{Iksanov-2025-Regenerative} identified the critical harmonic descent
chain with the decrement chain of such a composition, and this connection was
further developed in
\cite[Proposition~2.1]{Iksanov-Nikitin-Yakymiv-2026}. We observe that the same
representation applies throughout the beta-splitting family and use it to
obtain the corresponding limit laws in
Corollary~\ref{cor:Ln-clt-appendix} and
Remark~\ref{rem:Ln-power-limit} of
Appendix~\ref{app:discrete-height}.

    The importance of $\phi_\beta$ stems from the tagged fragment. If
    $\Pi_1(t)$ is the block containing label $1$, then
    \[
            |\Pi_1(t)|=e^{-\xi_t},
            \qquad \text{for a subordinator $\xi$ with } 
            \Eb e^{-z\xi_t}=e^{-t\phi_\beta(z)},
    \]
    that is, $\phi_\beta$ is the Laplace exponent of some subordinator $\xi$.
    Expectations of several functionals of a typical leaf can consequently be
    written in terms of the potential measure
    of $\xi$, defined by
    \begin{equation}\label{eq: U_beta}
            U_\beta(\D x)
            :=
            \int_0^\infty\P(\xi_t\in\D x)\D t,
            \qquad
            \text{where}\qquad
            \int_{[0,\infty)}e^{-zx}U_\beta(\D x)
            =
            \frac1{\phi_\beta(z)}.
    \end{equation}
    Standard renewal theory for $\xi$ then gives the first-order behaviour and the central limit
    theorem for the continuous height. For sharper results, we study the
    meromorphic continuation of $1/\phi_\beta$.
    
    For $-2<\beta<0$, the L\'evy density of $\xi$ is a positive mixture of
    exponentials, so $\xi$ is a meromorphic subordinator in the sense of
    \cite{Kuznetsov-Kyprianou-Pardo-2012}. In this case, $\phi$ is \textit{special Bernstein function}, and thus has a tractable Mittag--Leffler expansion. This gives exact residue expansions
    for the potential measure and the mean height. The case $\beta=0$ is
    explicit. For $\beta>0$, the positive-mixture property is lost. However,
    we still provide an infinite series expansion as well error bounds for its partial sums, using inversion techniques. One of the main structural differences from the case $\beta<0$ is the appearance of nonreal poles of the meromorphic extension of
    $1/\phi$. A link with with the theory of generalised Nevanlinna functions ensures their finite number, and we get an even more precise understanding 
    of their structure in the specific case of beta-splitting via complex-analytical arguments in Appendix~\ref{app:extmeromorphic-potential-proof}.

    Taking advantage of the same fragmentation representation, we also study the
    maximum continuous height $D_n^*$. If $N_i(t)$ is the number of sampled
    labels in the $i$th fragment and
    \[
            C_n(t)
            =
            \sum_i\binom{N_i(t)}2,
    \]
    then
    \[
            D_n^*\le t
            \qquad\text{if and only if}\qquad
            C_n(t)=0.
    \]
    Thus the maximum height is a shattering question for the whole
    fragmentation. Since
    \[
            \Eb C_n(t)
            =
            \binom n2
            \Eb\sum_iP_i(t)^2
            =
            \binom n2e^{-t\phi_\beta(1)},
    \]
    the natural scale is
    $2\log n/\phi_\beta(1)$. We prove in Theorem \ref{thm:max-height-results} that
    \[
            \frac{D_n^*}{\log n}
            \longrightarrow
            \frac2{\phi_\beta(1)},
            \qquad\text{in probability},
    \]
    and that $\phi_\beta(1)D_n^*-2\log n$ has a mixed Gumbel limit. At
    $\beta=-1$, this resolves \cite[Open Problem 3]{beta2-arxiv} and identifies the constant as $2$.
    \section{Results}\label{sec:results}
    
    We first state some identities linking quantities from the beta-splitting model for all
    admissible $\beta$ and the subordinator $\xi$ similar to \cite[Proposition~4.1]{Aldous-Janson-2025-Mellin}. Next, we state the discrete height
phase transition and refer to the corresponding fluctuation results. We then give the
    meromorphic expansions separately for $-2<\beta<0$, $\beta=0$, and
    $\beta>0$. The main analytic result is the residue description of the
    potential measure, including the positive-beta case. The final result concerns
    the maximum continuous-time height and is independent of the potential
    analysis.
    
    Keeping the notation of \cite{beta2-arxiv}, let
    \begin{itemize}
        \item $D_n$ be the continuous height of a uniformly chosen leaf,
        \item $L_n$ its discrete height,
        \item $a_\beta(n,j)$ the probability that the tagged leaf is ever
              contained in a clade of size $j$,
        \item $\Lambda_n$ the total continuous length of the tree.
    \end{itemize}
    These quantities admit formulas through $\xi$, and therefore their expectations can be expressed through
    its potential measure $U_\beta$.  The proofs use mainly the paintbox construction and
    are, in essence, the same as in the critical case; see
    \cite[Proposition~4.1]{Aldous-Janson-2025-Mellin}.
    For every nonnegative Borel function $f$ on $[0,\infty)$, introduce
    \begin{equation}\label{eq:potential-operator-results}
            \mathcal U_\beta f
            :=
            \Eb\left[\int_0^\infty f(\xi_t)\D t\right]
            =
            \int_{[0,\infty)}f(x)U_\beta(\D x),
    \end{equation}
    and for $n\ge2$ and $2\le j\le n$, define
    \begin{equation}
        \begin{split}
            \label{eq:def F_n}
            F_n(x)
            &:=
            1-(1-e^{-x})^{n-1},\\
            G_{n,j}(x)
            &:=
            e^{-(j-1)x}(1-e^{-x})^{n-j},\\
            H_n(x)
            &:=
            e^x\left[1-(1-e^{-x})^n\right]
            -n(1-e^{-x})^{n-1}.
        \end{split}
    \end{equation}
    \begin{theorem}[Potential identities]
    \label{thm:potential-identities-results}
    For every $\beta>-2$ and $n\ge2$, the following identities hold.
    \begin{enumeratei}
    
    \item For every $t\ge0$, the continuous height and the expected total length satisfy
    \begin{equation}\label{eq:potential-identities-Dn-Lambda}
            \P(D_n>t)=\Eb\bigl(F_n(\xi_t)\bigr),
            \qquad
            \Eb D_n=\mathcal U_\beta F_n,
            \qquad\text{and}\qquad
            \Eb\Lambda_n=\mathcal U_\beta H_n.
    \end{equation}
    
    \item The expected discrete height satisfies
    \begin{equation}\label{eq:an-j-results}
            \Eb L_n=\sum_{j=2}^n a_\beta(n,j),
            \qquad\text{where}\qquad
            a_\beta(n,j)
            =
            \phi_\beta(j-1)\binom{n-1}{j-1}
            \mathcal U_\beta G_{n,j}.
    \end{equation}
    
    \item Let $M_{n-1}$ be the maximum of $n-1$ independent exponential random
    variables of rate $1$, independent of $\xi$. Then
    \begin{equation}\label{eq:Dn-passage-results}
            D_n\stackrel{d}=T_{M_{n-1}},
            \qquad\text{where}\qquad
            T_x:=\inf\{t\ge0:\xi_t\ge x\}.
    \end{equation}
    
    \end{enumeratei}
    \end{theorem}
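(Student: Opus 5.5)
Everything goes through the paintbox representation of $\Pi(t)$ together with the tagged-fragment identity $|\Pi_1(t)|=e^{-\xi_t}$. By exchangeability, a uniformly chosen leaf can be taken to be label $1$. Conditionally on the interval fragmentation $I(\cdot)$, the labels $2,\dots,n$ are iid uniform points $U_2,\dots,U_n$, independent of the interval containing $U_1$.

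\textbf{Part (i), distribution of $D_n$.} The leaf $1$ is still in a non-singleton block at time $t$ exactly when some $U_j$, $2\le j\le n$, falls in the interval containing $U_1$. Conditioning on $\xi_t$, this probability is $1-(1-e^{-\xi_t})^{n-1}=F_n(\xi_t)$, which gives $\P(D_n>t)=\Eb F_n(\xi_t)$. Integrating over $t\in[0,\infty)$ and using Tonelli yields $\Eb D_n=\mathcal U_\beta F_n$.

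\textbf{Part (i), total length.} I would write $\Lambda_n=\int_0^\infty \#\{\text{blocks of }\Pi_n(t)\text{ of size}\ge2\}\D t$. The number of such blocks equals $\sum_{j=1}^n \mathbbm 1\{\text{block of }j\text{ has size}\ge2\}/|\text{block of }j|$. By exchangeability its expectation is $n\,\Eb\bigl[\mathbbm 1\{N\ge1\}/(1+N)\bigr]$, where, given $\xi_t$, $N\sim\mathrm{Bin}(n-1,e^{-\xi_t})$. With $p=e^{-x}$, the standard identity
\[
\Eb\frac1{1+N}=\frac{1-(1-p)^n}{np}
\]
gives
\[
n\Bigl(\Eb\tfrac1{1+N}-\P(N=0)\Bigr)=e^{x}\bigl[1-(1-e^{-x})^n\bigr]-n(1-e^{-x})^{n-1}=H_n(x).
\]
Integrating in $t$ gives $\Eb\Lambda_n=\mathcal U_\beta H_n$.

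\textbf{Part (iii).} Realise $U_j=1-e^{-E_j}$ with $E_j$ iid $\mathrm{Exp}(1)$. I expect this step to be the main obstacle, since it is the one that needs a genuine coupling rather than a computation. The plan is to use the fact that the law of the tagged fragment and the remaining points can be coupled (by size-biased sampling and homogeneity) so that the interval containing label $1$ at time $t$ is $[0,e^{-\xi_t})$, with the other labels placed on the complementary exponential scale. Concretely, I would rescale by placing the tagged point at the left end after relabelling via uniform rotation. One then needs to check that $j$ has been separated from $1$ by time $t$ iff $\xi_t\ge E_j$. This holds because the fragments split off from the tagged one are exactly the pieces $[e^{-\xi_{s}},e^{-\xi_{s-}})$. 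Hence $D_n=\inf\{t:\xi_t\ge\max_j E_j\}=T_{M_{n-1}}$ in law. A consistency check is that $\P(T_M>t)=\P(M>\xi_t)=\Eb F_n(\xi_t)$, which matches part (i).

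\textbf{Part (ii).} Under the same coupling, the tagged block has size $j$ at time $t$ exactly when $j-1$ of the $E_i$ exceed $\xi_t$ and $n-j$ are at most $\xi_t$. The probability of this is $\binom{n-1}{j-1}G_{n,j}(\xi_t)$. While in a size-$j$ block, the block splits at rate $\phi_\beta(j-1)$ by \eqref{eq: rule general}. The tagged leaf is ever in a clade of size $j$ iff exactly one split occurs from size $j$, so
\[
a_\beta(n,j)=\phi_\beta(j-1)\,\Eb\int_0^\infty\mathbbm 1\{|\Pi_{n,1}(t)|=j\}\,\D t,
\]
by the compensator of the counting process. This equals $\phi_\beta(j-1)\binom{n-1}{j-1}\,\mathcal U_\beta G_{n,j}$. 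Finally, $L_n$ is the number of splits along the spine, which is $\sum_{j\ge2}\mathbbm 1\{\text{ever size }j\}$, so taking expectations gives $\Eb L_n=\sum_{j=2}^n a_\beta(n,j)$.
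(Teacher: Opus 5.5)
Your proposal is correct. For the law and mean of $D_n$ and for part (ii), it is essentially the paper's proof. The paper likewise shows $Z_n(t)=\#(\Pi_1(t)\cap[n])\stackrel{d}{=}1+\operatorname{Bin}(n-1,e^{-\xi_t})$ given $\xi_t$. It then obtains $a_\beta(n,j)$ from the occupation time at size $j$, using that this size is visited at most once and held for an $\operatorname{Exp}(\phi_\beta(j-1))$ time; your compensator identity expresses the same fact. Note that both computations use only this fixed-$t$ marginal law, so no coupling is needed for them.

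Two points differ from the paper.

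\emph{Total length.} The paper conditions on $\mathbf P(t)$ and uses that fragment $i$ receives $\operatorname{Bin}(n,P_i(t))$ labels. It then applies the size-biased identity $\Eb\sum_i g(P_i(t))=\Eb[e^{\xi_t}g(e^{-\xi_t})]$ with $g(y)=1-(1-y)^n-ny(1-y)^{n-1}$. Your route instead writes the number of non-singleton blocks as $\sum_j\mathbbm{1}\{\cdot\}/|\text{block of }j|$ and uses exchangeability. This is a valid alternative: it needs only the law of $Z_n(t)$ and the identity $\Eb[1/(1+N)]=(1-(1-p)^n)/(np)$. It is the same size-biasing, carried out over labels rather than over fragments.

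\emph{Part (iii).} You overestimated the difficulty here. Your \emph{consistency check} is already a complete proof. Since $\{T_x>t\}=\{\xi_t<x\}$, it gives $\P(T_{M_{n-1}}>t)=\Eb F_n(\xi_t)=\P(D_n>t)$ for all $t\ge0$, and equal tails mean equal laws. This is exactly the paper's argument.

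The coupling is therefore unnecessary, and as sketched it has errors:
\begin{itemize}
\item If the tagged interval is $[0,e^{-\xi_t})$, you need $U_j=e^{-E_j}$, not $1-e^{-E_j}$. With your choice, $j$ remains with label $1$ iff $E_j<-\log(1-e^{-\xi_t})$, not iff $E_j>\xi_t$.
\item No single rotation makes all the nested tagged intervals start at $0$.
\end{itemize}
If you want a pathwise version (the paper uses one for $L_n$ in its appendix), argue as follows. Conditionally on $\xi$, the departure times of labels $2,\dots,n$ are iid with $\P(T_j>t\mid\xi)=e^{-\xi_t}$. Hence they can be realised as $T_{E_j}$, with $E_j$ iid $\operatorname{Exp}(1)$ independent of $\xi$, and then $D_n=\max_jT_{E_j}=T_{\max_jE_j}$.
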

    
    Put
    \begin{equation}\label{eq:mu-tau-results}
            \mu_\beta:=\phi_\beta'(0)
            =-\int_0^1\log(s)s^{\beta+1}(1-s)^\beta\D s,
            \quad\text{and}\quad
            \tau_\beta^2:=-\phi_\beta''(0)
            =\int_0^1(\log s)^2s^{\beta+1}(1-s)^\beta\D s.
    \end{equation}
    Based on the representation of $D_n$ as a passage time in
    \eqref{eq:Dn-passage-results}, standard renewal theory for $\xi$ gives 
    another proof of the central limit theorem for $D_n$, considered at
    $\beta=-1$ in \cite[Theorem~1.7]{Aldous-Pittel-2025},
    \cite[Theorem~2]{beta2-arxiv},
    \cite[Theorem~1.5]{Aldous-Janson-2025-Mellin}, and
    \cite[Theorem~9]{Kolesnik-2025}.
    
    \begin{corollary}[Typical continuous height]
    \label{cor:Dn-renewal-results}
    For every $\beta>-2$, as $n\to\infty$,
    \begin{equation}\label{eq:Dn-renewal-results}
            \Eb D_n\sim\frac{\log n}{\mu_\beta},
            \qquad\text{and}\qquad
            \frac{D_n-\mu_\beta^{-1}\log n}
            {\sqrt{\tau_\beta^2\mu_\beta^{-3}\log n}}
            \xrightarrow{d}\mathcal N(0,1).
    \end{equation}
    \end{corollary}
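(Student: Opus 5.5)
We will derive both statements from the passage-time representation \eqref{eq:Dn-passage-results} of Theorem~\ref{thm:potential-identities-results}(iii): $D_n\stackrel{d}=T_{M_{n-1}}$, with $M_{n-1}$ independent of $\xi$. The subordinator $\xi$ has Laplace exponent $\phi_\beta$ and no drift, since $\phi_\beta(z)/z\to0$ as $z\to\infty$. Its mean is $\mu_\beta=\phi_\beta'(0)<\infty$, and its variance is $\Var\xi_1=\tau_\beta^2<\infty$. Both integrals in \eqref{eq:mu-tau-results} converge for every $\beta>-2$: near $s=0$ the factor $s^{\beta+1}(\log s)^k$ is integrable because $\beta+1>-1$, and near $s=1$ we have $(\log s)^k(1-s)^\beta\approx(1-s)^{k+\beta}$ with $k+\beta>-1$ for $k=1,2$.

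The first step recalls the classical renewal facts for $T_x$, the first passage of a subordinator with finite positive mean and variance:
\[
\Eb T_x\sim x/\mu_\beta \quad\text{(elementary renewal theorem)}, \qquad \frac{T_x-x/\mu_\beta}{\sqrt{\tau_\beta^2\mu_\beta^{-3}x}}\xrightarrow{d}\mathcal N(0,1).
\]
The central limit theorem for $T_x$ follows from the CLT for $\xi_t$ by the standard inversion argument: $\{T_x>t\}=\{\xi_t<x\}$, up to a null set. One takes $t=x/\mu_\beta+y\sqrt{\tau_\beta^2 x/\mu_\beta^3}$ and then applies the Gaussian limit of $(\xi_t-\mu_\beta t)/(\tau_\beta\sqrt t)$.

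The second step handles the random level $M_{n-1}$. We have $M_{n-1}=\log n+G_n$ in distribution, where $G_n$ converges to a Gumbel law, so $G_n=\bo_{\P}(1)$, and $\Eb M_{n-1}=h_{n-1}=\log n+\bo(1)$.
\begin{itemize}
\item For the CLT, we condition on $M_{n-1}=x$. The fluctuation $(x-\log n)/\mu_\beta$ is $\bo(1)$, hence negligible against $\sqrt{\log n}$. Also $\sqrt{x/\log n}\to1$ uniformly for $|x-\log n|\le K$. Combined with the uniformity in $x$ of the convergence in step one (distribution functions converge to a continuous limit, hence uniformly), dominated convergence over the law of $M_{n-1}$ gives the Gaussian limit.
\item For the mean, $\Eb D_n=\Eb\,m(M_{n-1})$ with $m(x):=\Eb T_x=U_\beta([0,x))$ by the potential representation. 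The function $m$ is subadditive and increasing, and the renewal bound gives $m(x)\le c(1+x)$. Therefore $\Eb m(M_{n-1})/\log n\to1/\mu_\beta$, because $M_{n-1}/\log n\to1$ in $L^1$. The $L^1$ convergence holds since $\Eb M_{n-1}/\log n\to1$ and $M_{n-1}\ge0$, with convergence in probability.
\end{itemize}

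The main technical point is the uniformity in the first bullet. The cleanest route is monotonicity. For $|x-\log n|\le K$, sandwich $T_x$ between $T_{\log n-K}$ and $T_{\log n+K}$. Both endpoints satisfy the same CLT with the same centring up to $\bo(1)$. The tail event $|G_n|>K$ has probability that is small uniformly in $n$ as $K\to\infty$. This avoids any delicate uniform local estimates.
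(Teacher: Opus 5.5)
Your proposal is correct and follows the paper's proof essentially step by step. You use the passage-time representation $D_n\stackrel{d}=T_{M_{n-1}}$, the identity $\Eb T_x=U_\beta([0,x))$ with its linear bound and first-order asymptotics, uniform integrability of $U_\beta(M_{n-1})/\log n$ for the mean, and the monotone sandwich $T_{\log n-K}\le T_{M_{n-1}}\le T_{\log n+K}$ combined with the CLT for $\xi$ inverted to $T_x$. The differences are only cosmetic: you cite the elementary renewal theorem instead of Karamata's Tauberian theorem, you take a fixed window $K$ and then let $K\to\infty$ instead of the paper's slowly growing $K_n=\log\log n$, and you get uniform integrability from $L^1$ convergence of $M_{n-1}/\log n$ rather than from $L^2$-boundedness.
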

    By exchangeability, $\Eb L_n=\Eb\overline D_n$ in the notation of
    \cite{Aldous-1996}. Proposition~4 of that reference gives the following
    first-order asymptotics. The constants below are its constants rewritten in
    terms of the Laplace exponent of the tagged subordinator. An alternative derivation, using Theorem~\ref{thm:potential-identities-results}, is given in
    Appendix~\ref{app:discrete-height}.
    
    \begin{corollary}[Discrete-height phase transition]
    \label{cor:discrete-height-phase-results}
    For every $\beta>-2$, as $n\to\infty$,
    \begin{equation}\label{eq:discrete-height-phase-results}
            \Eb L_n
            \sim
            \begin{cases}
            \displaystyle
            \frac{\phi_\beta(\infty)}{\mu_\beta}\log n,
            &\beta>-1,\\[1em]
            \displaystyle
            \frac1{2\mu_{-1}}(\log n)^2,
            &\beta=-1,\\[1em]
            \displaystyle
            \frac{\Gamma(\beta+2)}{(-\beta-1)\phi_\beta(-\beta-1)}
            n^{-\beta-1},
            &-2<\beta<-1,
            \end{cases}
    \end{equation}
    where
    \[
            \phi_\beta(\infty)
            :=
            \lim_{a\to\infty}\phi_\beta(a)
            =
            B(\beta+2,\beta+1),
            \qquad\text{for }\beta>-1.
    \]
    \end{corollary}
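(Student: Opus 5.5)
By Theorem~\ref{thm:potential-identities-results}(ii), $\Eb L_n=\mathcal U_\beta f_n$ with $f_n:=\sum_{j=2}^n\phi_\beta(j-1)\binom{n-1}{j-1}G_{n,j}$. I would first put $f_n$ in closed form. Write $m_\beta(\D s):=s^{\beta+1}(1-s)^\beta\D s$, set $k=j-1$ and $y=e^{-x}$, and insert $\phi_\beta(k)=\int_0^1(1-s^k)\,m_\beta(\D s)$. The $k=0$ term vanishes, so the binomial theorem gives
\[
f_n(x)=\int_0^1\Bigl[1-\bigl(1-(1-s)e^{-x}\bigr)^{n-1}\Bigr]m_\beta(\D s)
=\int_0^1F_n\bigl(x+a(s)\bigr)\,m_\beta(\D s),\qquad a(s):=\log\tfrac1{1-s}.
\]
Tonelli then yields $\Eb L_n=\int_0^1 g_n(a(s))\,m_\beta(\D s)$, where $g_n(a):=\mathcal U_\beta[F_n(\cdot+a)]$. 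Arguing as in Theorem~\ref{thm:potential-identities-results}(iii), $g_n(a)=\Eb\,T_{(M_{n-1}-a)^+}$. This is the renewal function $V(x):=U_\beta([0,x])$ evaluated at the shifted maximum, up to the convention at $0$.

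Next I would change variables to $a=a(s)$, so that $m_\beta(\D s)=w_\beta(a)\,\D a$ with $w_\beta(a)=(1-e^{-a})^{\beta+1}e^{-(\beta+1)a}$. The three regimes come from the two competing facts $g_n(a)\approx(\log n-a)^+/\mu_\beta$ and $w_\beta(a)\approx e^{-(\beta+1)a}$ for large $a$. The inputs are the renewal bounds $V(x)\le C(1+x)$ and $V(x)\sim x/\mu_\beta$, together with $M_{n-1}-\log n\to G$ (standard Gumbel) with all exponential moments controlled.

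\textbf{Case $\beta>-1$.} Here $m_\beta$ has finite mass $\phi_\beta(\infty)$. For each fixed $a$, $g_n(a)/\log n\to1/\mu_\beta$, and $g_n(a)\le C(1+\Eb M_{n-1})=\bo(\log n)$ uniformly in $a$. Dominated convergence then gives $\phi_\beta(\infty)\log n/\mu_\beta$.

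\textbf{Case $\beta=-1$.} Now $w_{-1}\equiv1$, and I would show $\int_0^\infty g_n(a)\,\D a\sim\int_0^{\log n}(\log n-a)\,\D a/\mu_{-1}=(\log n)^2/(2\mu_{-1})$. To do this I would split the integral at $a=\log n-K\sqrt{\log n}$ and use $V(x)=x/\mu_{-1}+\bo(1)$ (finite second moment), so that the region near $\log n$ and the Gumbel fluctuations contribute $\bo(\log n)$.

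\textbf{Case $-2<\beta<-1$.} Put $\lambda:=-\beta-1\in(0,1)$. The weight $e^{\lambda a}$ grows, so the mass concentrates near $a=\log n$. I would substitute $a=\log n-b$, which gives
\[
\Eb L_n\approx n^{\lambda}\int_{\Rb}e^{-\lambda b}\,\Eb V\bigl((M_{n-1}-\log n+b)^+\bigr)\,\D b .
\]
Passing to the limit requires dominated convergence, and this is where the bounds $V(x)\le C(1+x)$ and the uniform exponential-moment bounds on $M_{n-1}-\log n$ enter. Fubini then factorises the limit as $\Eb e^{-\lambda G}\cdot\int_0^\infty e^{-\lambda c}V(c)\,\D c=\Eb e^{-\lambda G}/(\lambda\phi_\beta(\lambda))$, using \eqref{eq: U_beta}. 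This produces the shape $\Gamma(\cdot)/((-\beta-1)\phi_\beta(-\beta-1))$. I expect the main obstacle to be exactly this step. One part is the justification of the interchange; the other is getting the Gamma constant right. For the constant I would avoid the Gumbel limit altogether and compute $\int_0^\infty e^{\lambda a}\,\Eb V\bigl((M_{n-1}-a)^+\bigr)\,\D a$ exactly. Using $\P(M_{n-1}\le x)=(1-e^{-x})^{n-1}$, this becomes a Beta integral in $e^{-x}$, whose asymptotics $n^{\lambda}\Gamma(\cdot)$ follow from Stirling. The error from replacing $w_\beta$ by $e^{\lambda a}$ near $a=0$ contributes only $\bo(\log n)=\so(n^\lambda)$.
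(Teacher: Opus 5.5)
Your plan is sound and is essentially the paper's proof. Summing $a_\beta(n,j)$ with the binomial theorem and substituting $s=1-e^{-a}$, your formula $\Eb L_n=\int_0^\infty w_\beta(a)\,\Eb U_\beta\bigl([0,(M_{n-1}-a)^+)\bigr)\,\D a$ becomes, after Fubini, exactly the paper's identity $\Eb L_n=\Eb V_\beta(M_{n-1})$, where $V_\beta(x)=\int_0^x k_\beta(a)U_\beta(x-a)\,\D a$ and $k_\beta=w_\beta$. The three regimes then rest on the same inputs: the linear renewal bound, $U_\beta(x)\sim x/\mu_\beta$, $L^2$ control of $M_{n-1}/\log n$, and the Beta moment $\Eb e^{\lambda M_{n-1}}=\Gamma(n)\Gamma(1-\lambda)/\Gamma(n-\lambda)$. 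The only organisational difference is that the paper takes $x\to\infty$ in the deterministic function $V_\beta$ before substituting $M_{n-1}$, whereas you take $n\to\infty$ inside the $a$-integral. The paper's order also makes your $\sqrt{\log n}$ splitting at $\beta=-1$ unnecessary, since $V_{-1}(x)=\int_0^xU_{-1}(u)\,\D u\sim x^2/(2\mu_{-1})$ together with uniform integrability suffices.

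Two corrections are needed for $-2<\beta<-1$. First, the sign in the Gumbel route is wrong. With $c=G_n+b$ one has $e^{-\lambda b}=e^{-\lambda c}e^{+\lambda G_n}$, so that route gives $\Eb e^{\lambda G}=\Gamma(1-\lambda)=\Gamma(\beta+2)$. As you wrote it, $\Eb e^{-\lambda G}=\Gamma(1+\lambda)$, which would produce the wrong constant. Second, your ``exact'' computation is not literally a Beta integral. It equals $\Eb\bigl[e^{\lambda M_{n-1}}\Psi(M_{n-1})\bigr]$, where $\Psi(x)=\int_0^x e^{-\lambda c}U_\beta([0,c])\,\D c\uparrow 1/(\lambda\phi_\beta(\lambda))$. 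You therefore still need a squeeze on top of the Beta moment, namely $\Psi(A)\bigl(\Eb e^{\lambda M_{n-1}}-e^{\lambda A}\bigr)\le\Eb\bigl[e^{\lambda M_{n-1}}\Psi(M_{n-1})\bigr]\le\Psi(\infty)\Eb e^{\lambda M_{n-1}}$. This is the paper's $\varepsilon$--$A$ step. Your splitting $w_\beta=e^{\lambda a}+(w_\beta-e^{\lambda a})$, with an integrable remainder contributing $\bo(\log n)$, makes it slightly cleaner than the paper's version, because $\Psi$ is monotone.
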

    The corresponding fluctuation results, based on the theory of regenerative
compositions, are collected in Appendix~\ref{app:discrete-height}. For $\beta>-1$, $L_n$ satisfies a
central limit theorem with fluctuations of order $(\log n)^{1/2}$, while at
$\beta=-1$ its fluctuations are of order $(\log n)^{3/2}$. For
$-2<\beta<-1$, the limit is instead non-Gaussian: $n^{\beta+1}L_n$ converges
almost surely to an explicit exponential functional of the tagged
subordinator. At $\beta=-1$, the occupied-gap representation recovers
Iksanov's construction \cite{Iksanov-2025-Regenerative}; a stronger joint
formulation is given in
\cite[Proposition 2.1]{Iksanov-Nikitin-Yakymiv-2026}.

    \subsection{Meromorphic and residue expansions}\label{sec:meromorphic}
    
    For later use, define, for every $\beta>-2$,   \begin{equation}\label{eq:eta-results}
            \eta_\beta
            :=U_\beta(\{0\})
            =\lim_{a\to\infty}\frac1{\phi_\beta(a)}
            =
            \begin{cases}
            B(\beta+2,\beta+1)^{-1},
            &\beta>-1,\\
            0,
            &-2<\beta\le-1.
            \end{cases}
    \end{equation}
    
    \subsubsection{The range \texorpdfstring{$-2<\beta<0$}{-2<beta<0}}
    
    For $-2<\beta<0$, the L\'evy density of the tagged subordinator is, for $x>0$,
    \begin{equation}\label{eq:levy-mixture-negative-results}
            \lambda_\beta(x)
            =e^{-(\beta+2)x}(1-e^{-x})^\beta
            =\sum_{m=0}^\infty
            \frac{(-\beta)_m}{m!}e^{-(m+\beta+2)x},
    \end{equation}
    where $(a)_m$ denotes the rising factorial.  The coefficients in
    \eqref{eq:levy-mixture-negative-results} are positive, so $\xi$ is a
    meromorphic subordinator in the sense of
    \cite{Kuznetsov-Kyprianou-Pardo-2012}.  Therefore the zeros of $\phi_\beta$
    are real and simple and interlace with its simple poles; write them as
    \[
            0=s_{\beta,0}>s_{\beta,1}>s_{\beta,2}>\cdots.
    \]
    We then have the following representation of the
    potential measure.
    
    \begin{theorem}[Potential measure for $-2<\beta<0$]
    \label{thm:negative-potential-results}
    For $-2<\beta<0$,
    \begin{equation}\label{eq:U-negative-results}
            U_\beta(\D x)
            =\eta_\beta\delta_0(\D x)
            +\left[
            \frac1{\mu_\beta}
            +\sum_{k\ge1}
            \frac{e^{s_{\beta,k}x}}{\phi_\beta'(s_{\beta,k})}
            \right]\D x.
    \end{equation}
    The residues $1/\phi_\beta'(s_{\beta,k})$ are positive, and the series
    converges locally uniformly for $x>0$.  Equivalently, for $\re z>0$,
    \begin{equation}\label{eq:reciprocal-negative-results}
            \frac1{\phi_\beta(z)}
            =\eta_\beta+\frac1{\mu_\beta z}
            +\sum_{k\ge1}
            \frac1{\phi_\beta'(s_{\beta,k})(z-s_{\beta,k})}.
    \end{equation}
    In particular,
    \begin{equation}\label{eq:finite-part-negative-results}
            \eta_\beta
            -\sum_{k\ge1}
            \frac1{s_{\beta,k}\phi_\beta'(s_{\beta,k})}
            =-\frac{\phi_\beta''(0)}{2\mu_\beta^2}.
    \end{equation}
    \end{theorem}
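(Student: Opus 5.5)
Since the L\'evy density \eqref{eq:levy-mixture-negative-results} is a positive mixture of exponentials with rates $\rho_m:=m+\beta+2>0$ and weights $a_m:=(-\beta)_m/m!>0$, integrating term by term gives the partial-fraction form
\[
\phi_\beta(z)=\int_0^\infty(1-e^{-zx})\lambda_\beta(x)\,\D x=\sum_{m\ge0}a_m\Bigl(\frac1{\rho_m}-\frac1{\rho_m+z}\Bigr)=\sum_{m\ge0}\frac{a_m z}{\rho_m(\rho_m+z)}.
\]
Here $a_m\sim m^{-\beta-1}/\Gamma(-\beta)$, so the series converges since $\beta>-2$. This exhibits $\phi_\beta$ as a meromorphic function on $\Cb$ with simple poles at $-\rho_m$ and positive residues $a_m$ of the reciprocal structure. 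On each interval $(-\rho_{m+1},-\rho_m)$, the function $\phi_\beta$ is strictly increasing on the real line, since $\phi_\beta'(z)=\sum a_m/(\rho_m+z)^2>0$, and it runs from $-\infty$ to $+\infty$. Hence each such interval contains exactly one simple zero $s_{\beta,k}$, and $\phi_\beta'(s_{\beta,k})>0$. On $(-\rho_0,\infty)$ the only zero is $0$. Off the real axis, $\Im\phi_\beta(z)=\Im z\sum a_m/|\rho_m+z|^2\neq0$, so there are no nonreal zeros. This is the standard meromorphic-subordinator structure of \cite{Kuznetsov-Kyprianou-Pardo-2012}, and I would simply cite it.

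Next I would derive the Mittag--Leffler expansion \eqref{eq:reciprocal-negative-results}. The key observation is that $-1/\phi_\beta$ is again a Nevanlinna (Pick) function: $\Im\phi_\beta>0$ on the upper half-plane implies $\Im(-1/\phi_\beta)>0$ there. It is also real and meromorphic with only real simple poles. By the Krein/Chebotarev representation of such functions,
\[
\frac1{\phi_\beta(z)}=A+Bz+\sum_k\frac{r_k}{z-s_{\beta,k}},
\]
with residues $r_k=1/\phi_\beta'(s_{\beta,k})>0$ and $\sum_k r_k/(1+s_{\beta,k}^2)<\infty$. A priori this may require convergence-enforcing terms.

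To remove them and fix the constants, I would use the behaviour as $z\to+\infty$. Since $1/\phi_\beta(z)\to\eta_\beta$, as in \eqref{eq:eta-results}, we must have $B=0$. The harder part, which I expect to be the main obstacle, is showing that $\sum_k r_k<\infty$ so that the series converges without compensators and the constant equals $\eta_\beta$. My first attempt would be to write $1/\phi_\beta=\int e^{-zx}U_\beta(\D x)$ and exploit the fact that $U_\beta$ has a density on $(0,\infty)$ that is completely monotone. Indeed, the potential of a special, complete Bernstein function is completely monotone, which is what \cite{Kuznetsov-Kyprianou-Pardo-2012} proves for meromorphic subordinators. Bernstein's theorem then gives $u(x)=\int e^{-sx}\sigma(\D s)$, and the meromorphy forces $\sigma$ to be atomic at $\{0\}\cup\{-s_{\beta,k}\}$ with masses $1/\mu_\beta$ and $r_k$. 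The residue at $0$ equals $1/\phi_\beta'(0)=1/\mu_\beta$. As a consequence, $u(x)=1/\mu_\beta+\sum_k r_ke^{s_{\beta,k}x}$. Since the $-s_{\beta,k}$ grow roughly linearly, lying in successive intervals $(\rho_k,\rho_{k+1})$, this series converges locally uniformly for $x>0$, with no need for $\sum r_k<\infty$ beyond what integrability of $u$ near $0$ requires. Taking Laplace transforms term by term, justified by monotone convergence since all terms are positive, yields \eqref{eq:U-negative-results} and \eqref{eq:reciprocal-negative-results} for $\Re z>0$. The atom $\eta_\beta\delta_0$ is identified from the limit $z\to\infty$.

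Finally, \eqref{eq:finite-part-negative-results} follows by comparing constant terms as $z\to0$. From the Taylor expansion $\phi_\beta(z)=\mu_\beta z+\tfrac12\phi_\beta''(0)z^2+\bo(z^3)$ we get
\[
\frac1{\phi_\beta(z)}=\frac1{\mu_\beta z}-\frac{\phi_\beta''(0)}{2\mu_\beta^2}+\bo(z).
\]
On the other hand, letting $z\to0$ in \eqref{eq:reciprocal-negative-results} gives the constant term $\eta_\beta-\sum_k 1/(s_{\beta,k}\phi_\beta'(s_{\beta,k}))$. Here dominated convergence applies because $\sum_k r_k/|s_{\beta,k}|=\int_0^\infty(u(x)-1/\mu_\beta)\,\D x<\infty$, which follows from the exponential decay of the series at large $x$ together with integrability near $0$. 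Equating the two constant terms gives the claim.
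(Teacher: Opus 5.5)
Your proof is correct. It reaches the result by a somewhat different route from the paper.

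\textbf{How your route differs.} The paper applies Chebotarev's representation of meromorphic Nevanlinna functions to the complete Bernstein function $z/\phi_\beta(z)$. This delivers the partial fractions \eqref{eq:reciprocal-negative-results} directly, and $\sum_n B_n/b_n^2<\infty$ makes the series converge after division by $z$. The constants are fixed by the limits at $0$ and $+\infty$. Only then is $U_\beta$ identified, by exhibiting a measure with Laplace transform $1/\phi_\beta$ and using uniqueness.

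You work in the opposite order. Since $1/\phi_\beta=\frac1z\cdot\frac{z}{\phi_\beta(z)}$ with $z/\phi_\beta$ complete Bernstein, $1/\phi_\beta$ is a Stieltjes function, $1/\phi_\beta(z)=\eta_\beta+\int_{[0,\infty)}\sigma(\D s)/(z+s)$. Hence $U_\beta=\eta_\beta\delta_0+u(x)\D x$ with $u(x)=\int e^{-sx}\sigma(\D s)$ completely monotone, and you then read off $\sigma$. This makes positivity, the identification of $U_\beta$ and term-by-term transforms immediate once $\sigma$ is known.

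\textbf{What you still need to justify.} All the content now sits in ``meromorphy forces $\sigma$ to be atomic'', and this should be argued explicitly via the Stieltjes inversion formula. The function $\int\sigma(\D s)/(z+s)=1/\phi_\beta(z)-\eta_\beta$ extends meromorphically across $(-\infty,0]$ and is real there away from $0,s_{\beta,1},s_{\beta,2},\ldots$. So $\sigma$ is carried by $\{0\}\cup\{-s_{\beta,k}:k\ge1\}$. Its mass at $-s_{\beta,k}$ is the residue $1/\phi_\beta'(s_{\beta,k})$, and its mass at $0$ is $1/\mu_\beta$. This is exactly the fact the paper imports from Chebotarev. Your dominated-convergence proof of \eqref{eq:finite-part-negative-results} is fine and equivalent to the paper's monotone-convergence argument.

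\textbf{Two incorrect side claims.} Nothing you finally use depends on either.

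First, the ``main obstacle'' $\sum_k r_k<\infty$ is false for every $\beta\in(-2,0)$. Monotone convergence in the expansion gives $\sum_k r_k=\lim_{z\to\infty}z\,(1/\phi_\beta(z)-\eta_\beta)-1/\mu_\beta$, and this limit is $+\infty$:
\begin{itemize}
\item for $\beta\le-1$, because $\eta_\beta=0$ and $\phi_\beta(z)=\so(z)$;
\item for $-1<\beta<0$, because $\phi_\beta(\infty)-\phi_\beta(z)=B(z+\beta+2,\beta+1)\asymp z^{-\beta-1}$.
\end{itemize}
Equivalently, $u(0+)=\infty$. Avoiding compensators only needs $\sum_k r_k/|s_{\beta,k}|<\infty$. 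That follows at once from finiteness of $\int\sigma(\D s)/(z_0+s)$ at a single $z_0>0$, so the paragraph with the Krein representation and the coefficient $B$ can be dropped.

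Second, linear growth of $|s_{\beta,k}|$ alone does not give locally uniform convergence of $\sum_k r_k e^{s_{\beta,k}x}$. It follows from that summability together with $e^{sx}\le C_{\delta,z_0}/(z_0-s)$ for $x\ge\delta$, as in the paper. Alternatively, use that the tails are decreasing in $x$.
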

    Applying the last result in \eqref{eq:potential-identities-Dn-Lambda} we obtain a series expansion of $\Eb D_n$. Explicit error bounds are available as a by-product of the proof, see \eqref{eq:error_bound}.
    \begin{theorem}[Mean height expansion for $-2<\beta<0$]
    \label{thm:negative-height-results}
    For every $-2<\beta<0$ and every fixed $r\in\mathbb N_0$, as $n\to\infty$,
    \begin{equation}\label{eq:EDn-negative-exact-results}
    \begin{split}
            \Eb D_n
            &=
            \frac{h_{n-1}}{\mu_\beta}
            -\frac{\phi_\beta''(0)}{2\mu_\beta^2}
            -\sum_{k\ge1}
            \frac{\Gamma(-s_{\beta,k})\Gamma(n)}
            {\phi_\beta'(s_{\beta,k})\Gamma(n-s_{\beta,k})}
            \\
            &=
            \frac{h_{n-1}}{\mu_\beta}
            -\frac{\phi_\beta''(0)}{2\mu_\beta^2}
            -\sum_{k=1}^{r}
            \frac{\Gamma(-s_{\beta,k})\Gamma(n)}
            {\phi_\beta'(s_{\beta,k})\Gamma(n-s_{\beta,k})}
            +O_{\beta,r}\bigl(n^{s_{\beta,r+1}}\bigr).
    \end{split}
    \end{equation}
    \end{theorem}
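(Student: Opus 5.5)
We start from Theorem~\ref{thm:potential-identities-results}(i), which gives $\Eb D_n=\mathcal U_\beta F_n$ with $F_n(x)=1-(1-e^{-x})^{n-1}$. Into this we substitute the density representation \eqref{eq:U-negative-results} of Theorem~\ref{thm:negative-potential-results}. The atom $\eta_\beta\delta_0$ contributes $\eta_\beta F_n(0)=\eta_\beta$. The remaining part is
\[
\int_0^\infty F_n(x)\Bigl[\frac1{\mu_\beta}+\sum_{k\ge1}\frac{e^{s_{\beta,k}x}}{\phi_\beta'(s_{\beta,k})}\Bigr]\D x .
\]

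We evaluate the pieces one at a time. For the constant term, substitute $u=1-e^{-x}$ to get
\[
\int_0^\infty F_n(x)\,\D x=\int_0^1\frac{1-u^{n-1}}{1-u}\,\D u=h_{n-1}.
\]
For each exponential term with $s=s_{\beta,k}<0$, write $F_n=1-(1-e^{-x})^{n-1}$. The same substitution gives
\[
\int_0^\infty e^{sx}F_n(x)\,\D x=-\frac1s-B(-s,n)=-\frac1s-\frac{\Gamma(-s)\Gamma(n)}{\Gamma(n-s)}.
\]
Summing these, the terms $-1/(s_{\beta,k}\phi_\beta'(s_{\beta,k}))$ combine with $\eta_\beta$. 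By \eqref{eq:finite-part-negative-results} this combination equals $-\phi_\beta''(0)/(2\mu_\beta^2)$, and the first line of \eqref{eq:EDn-negative-exact-results} follows.

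The interchange of sum and integral needs care. All residues $1/\phi_\beta'(s_{\beta,k})$ are positive, and the integrands $e^{s_{\beta,k}x}F_n(x)$ are nonnegative, so Tonelli justifies exchanging sum and integral for the full series. The two resulting series $\sum_k 1/(|s_{\beta,k}|\phi_\beta'(s_{\beta,k}))$ and $\sum_k B(-s_{\beta,k},n)/\phi_\beta'(s_{\beta,k})$ are each finite. The first converges by \eqref{eq:finite-part-negative-results}, or simply because it equals $\sum_k\int_0^\infty e^{s_kx}\D x/\phi'(s_k)$, which is bounded by the integral of the density against $F_1\le1$ near infinity plus an integrable piece near $0$. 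The second is dominated by the first, since $B(-s,n)\le -1/s$. So splitting the sum is legitimate.

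For the truncated statement, we bound the tail
\[
R_r(n):=\sum_{k>r}\frac{\Gamma(-s_k)\Gamma(n)}{\phi_\beta'(s_k)\Gamma(n-s_k)}=\int_0^\infty (1-e^{-x})^{n-1}\rho_r(x)\,\D x,\qquad \rho_r(x)=\sum_{k>r}\frac{e^{s_kx}}{\phi_\beta'(s_k)} .
\]
This tail bound is the main obstacle. The individual terms behave like $n^{s_k}\Gamma(-s_k)$, but $\Gamma(-s_k)$ grows super-exponentially in $k$, so the bound cannot be obtained termwise. Instead we work with the density. Since the $s_k$ are decreasing, for $x\ge1$ we have $\rho_r(x)\le e^{s_{r+1}(x-1)}\rho_r(1)$. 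Hence
\[
\int_1^\infty(1-e^{-x})^{n-1}\rho_r\,\D x\le C\int_0^\infty (1-e^{-x})^{n-1}e^{s_{r+1}x}\D x=C\,B(-s_{r+1},n)=O(n^{s_{r+1}}).
\]
Near $0$ we use $(1-e^{-x})^{n-1}\le x^{n-1}$, together with the bound $\rho_r(x)\le U$-density $=O(x^{-1-\beta'})$ for some exponent less than one, which should follow from the small-$x$ behaviour of $U_\beta$ for $\beta<0$. This makes $\int_0^1 x^{n-1}\rho_r\,\D x$ decay like $1/n$ times a constant, which is too weak. We therefore refine: on $[0,1]$ we have $(1-e^{-x})^{n-1}\le (1-e^{-1})^{n-1}$ for $x\le1$, which is exponentially small, so that piece is $o(n^{s_{r+1}})$ provided $\int_0^1\rho_r<\infty$. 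That integrability holds because $\rho_r$ is bounded by the potential density, which is locally integrable. This yields the $O_{\beta,r}(n^{s_{\beta,r+1}})$ error.
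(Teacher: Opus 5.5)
Your proof is correct. The exact series is derived as in the paper: Tonelli against the density in \eqref{eq:U-negative-results}, the integrals $h_{n-1}$ and $-1/s-B(-s,n)$, and the finite-part identity \eqref{eq:finite-part-negative-results}. Your parenthetical alternative justification via ``$F_1\le 1$'' is garbled, since $F_1\equiv 0$. It is also unnecessary, because \eqref{eq:finite-part-negative-results} already gives convergence.

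The genuine difference is in the remainder. You work with the tail density $\rho_r$ and split at $x=1$. For $x\ge1$, positivity of the residues and $s_{\beta,k}\le s_{\beta,r+1}$ give $\rho_r(x)\le e^{s_{\beta,r+1}(x-1)}\rho_r(1)$, so this part is $O(B(-s_{\beta,r+1},n))$. On $[0,1]$, the factor $(1-e^{-1})^{n-1}$ together with integrability of $\rho_r$ makes the contribution exponentially small. Integrability holds since $\int_0^\infty\rho_r=\sum_{k>r}1/(|s_{\beta,k}|\phi_\beta'(s_{\beta,k}))<\infty$. This argument is valid.

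The paper instead stays termwise. Put $a_{\beta,k}:=-s_{\beta,k}$. Since $aB(a,n)=\Gamma(n)/\prod_{j=1}^{n-1}(a+j)$ is decreasing in $a$, one has $B(a_{\beta,k},n)\le (a_{\beta,r+1}/a_{\beta,k})\,B(a_{\beta,r+1},n)$ for $k\ge r+1$. Hence the whole tail is at most $a_{\beta,r+1}B(a_{\beta,r+1},n)\sum_{k>r}1/(a_{\beta,k}\phi_\beta'(s_{\beta,k}))$, and $B(a,n)\le\Gamma(a)(n-1)^{-a}$ finishes.

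So your stated reason for abandoning the termwise route is a misdiagnosis. The super-exponential growth of $\Gamma(-s_{\beta,k})$ only matters if you apply the asymptotic $B(a,n)\sim\Gamma(a)n^{-a}$ separately to each $k$, and that asymptotic is not uniform in $a$. For fixed $n$ the terms obey $B(a,n)\le 1/a$, and the ratio comparison above is uniform in $k$.

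The paper's route gives the explicit bound \eqref{eq:error_bound} for every $n\ge2$ in one line, without splitting. Yours costs a split and a less transparent constant. In exchange, it only needs the tail density to be dominated by a multiple of $e^{s_{\beta,r+1}x}$ away from zero and to be integrable near zero.
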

    
    \begin{remark}\label{rem:harmonic-number-results}
    Keeping $h_{n-1}$ unexpanded in
    \eqref{eq:EDn-negative-exact-results} separates the residue error from the
    ordinary integer-power corrections.  If desired, one may use
    $h_{n-1}=\log n+\gamma+\bo(n^{-1})$ afterwards.
    For $\beta=-1$,
\eqref{eq:EDn-negative-exact-results} recovers, and through its exact first
equality strengthens, \cite[Theorem~7.3]{Aldous-Janson-2025-Mellin}.
    \end{remark}
    
    \begin{remark}[The endpoint $\beta=0$]\label{rem:beta-zero-results}
    At $\beta=0$,
    \[
            \phi_0(z)=\frac{z}{2(z+2)},
            \qquad
            \text{and}\qquad
            \frac1{\phi_0(z)}=2+\frac4z.
    \]
    Consequently,
    \begin{equation}\label{eq:beta-zero-results}
            U_0(\D x)=2\delta_0(\D x)+4\D x,
            \qquad
            \text{and}\qquad
            \Eb D_n=2+4h_{n-1}.
    \end{equation}
    \end{remark}

    \subsubsection{The positive-beta residue expansion}\label{subsec:MerbetaPos}
    
    Recall that $\phi_\beta$ and $\eta_\beta$ were defined in
    \eqref{eq:phi-beta-intro} and \eqref{eq:eta-results}, respectively.
    The beta-function representation
    \begin{equation}\label{eq:phi-beta-meromorphic-results}
            \phi_\beta(z)
            =
            B(\beta+2,\beta+1)
            -
            B(z+\beta+2,\beta+1)
    \end{equation}
    gives a meromorphic continuation of $\phi_\beta$ on $\Cb$.
    
    For $\beta>0$, let $\mathcal Z_\beta$ denote the set of distinct nonzero
    zeros of $\phi_\beta$. For $\rho\in\mathcal Z_\beta$, let $m_\rho$ be its
    multiplicity and write the principal part of $1/\phi_\beta$ at $\rho$ as
    \begin{equation}\label{eq:principal-part-positive-results}
            \mathcal P_{\beta,\rho}(z)
            :=
            \sum_{\ell=1}^{m_\rho}
            \frac{c_{\rho,\ell}}{(z-\rho)^\ell}.
    \end{equation}
    Define also, for $x>0$ and $n\ge2$,
    \begin{equation}\label{eq:principal-part-inverses-positive-results}
            u_{\beta,\rho}(x)
            :=
            e^{\rho x}
            \sum_{\ell=1}^{m_\rho}
            c_{\rho,\ell}\frac{x^{\ell-1}}{(\ell-1)!},
            \qquad\text{and}\qquad
            \mathcal B_{\beta,\rho}(n)
            :=
            \sum_{\ell=1}^{m_\rho}
            \frac{c_{\rho,\ell}}{(\ell-1)!}
            \frac{\partial^{\ell-1}}{\partial\rho^{\ell-1}}B(-\rho,n).
    \end{equation}
    
    \begin{theorem}[Zero structure for $\beta>0$]
    \label{thm:positive-zero-results}
    Let $\beta>0$. Every nonzero zero of $\phi_\beta$ lies in the open left
    half-plane. There are only finitely many nonreal zeros, counted with
    multiplicity, and they occur in conjugate pairs.
    
If $\beta=n_\beta$ is a positive integer, then $1/\phi_{n_\beta}$ is rational,
$\mathcal Z_{n_\beta}$ consists of exactly $n_\beta$ simple zeros, and
\[
\begin{cases}
\text{all these zeros are nonreal}, & \text{if $n_\beta$ is even},\\
-3n_\beta-4\text{ is the unique real zero}, & \text{if $n_\beta$ is odd}.
\end{cases}
\]

If $\beta$ is noninteger, let
\[
        \beta=n_\beta+\theta_\beta,
        \qquad 
        \text{with }
        n_\beta=\lfloor\beta\rfloor \text{ and }
\theta_\beta\in(0,1).
\]
If $n_\beta$ is even, then $\phi_\beta$ has exactly $n_\beta$ nonreal
zeros, counted with multiplicity. If $n_\beta$ is odd, then
$\phi_\beta$ has at most $n_\beta+1$ nonreal zeros, counted with
multiplicity. If this upper bound is attained, then all real zeros are
simple.
    
    Moreover, for every $\sigma>0$ such that
    $\phi_\beta$ has no zero on $\re z=-\sigma$, 
    \begin{equation}\label{eq:Z-strip-results}
        \text{the set }    \mathcal Z_{\beta,\sigma}
            :=
            \{\rho\in\mathcal Z_\beta:-\sigma<\re\rho<0\}
            \text{ is finite.}
    \end{equation}
    \end{theorem}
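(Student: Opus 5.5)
The plan is to work with the beta-function form \eqref{eq:phi-beta-meromorphic-results}. Writing
\[
R_\beta(z):=\frac{\Gamma(z+\beta+2)}{\Gamma(z+2\beta+3)},
\qquad
\phi_\beta(z)=\Gamma(\beta+1)\bigl[R_\beta(0)-R_\beta(z)\bigr],
\]
the nonzero zeros of $\phi_\beta$ are exactly the solutions of $R_\beta(z)=R_\beta(0)$ with $z\neq0$.

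\textbf{Location.} For $\re z\ge0$, $z\ne0$, use \eqref{eq:phi-beta-intro}. When $\re z>0$ we have $|s^z|<1$ on $(0,1)$, and when $z=iy$ with $y\ne0$ the function $1-\cos(y\log s)$ is not a.e.\ zero. In both cases
\[
\re\phi_\beta(z)=\int_0^1\bigl(1-\re s^z\bigr)s^{\beta+1}(1-s)^\beta\D s>0.
\]
So every nonzero zero lies in $\re z<0$. Conjugate symmetry is immediate because $\phi_\beta$ is real on the real axis.

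\textbf{Strip finiteness and finiteness of nonreal zeros.} By Stirling, $R_\beta(z)=z^{-\beta-1}(1+\so(1))$ as $|z|\to\infty$ in any sector avoiding the negative axis. Hence on the strip $-\sigma\le\re z\le0$ we get $R_\beta(z)\to0\ne R_\beta(0)$ as $|\Im z|\to\infty$. The zeros in the strip are therefore confined to a compact set, which gives \eqref{eq:Z-strip-results}; the hypothesis of no zero on $\re z=-\sigma$ handles the boundary. Near the negative axis I will apply the reflection formula:
\[
R_\beta(z)=\frac{\Gamma(-z-2\beta-2)}{\Gamma(-z-\beta-1)}\cdot\frac{\sin\pi(z+2\beta+3)}{\sin\pi(z+\beta+2)}.
\]
The Gamma-ratio is $\asymp|z|^{-\beta-1}$. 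The sine ratio is bounded once $|\Im z|\ge1$, so all zeros with large modulus satisfy $|\Im z|<1$. In the strip $|\Im z|<1$, $\re z\ll0$, I intend to show by a Rouché comparison on each small box between consecutive poles $-\beta-2-k$ that the equation $R_\beta=R_\beta(0)$ has exactly as many solutions in the box as on the real segment. The idea is that $R_\beta$ is real, runs monotonically from $\pm\infty$ across the tiny value $R_\beta(0)$, and is large except near its zeros $-2\beta-3-k$. This yields finitely many nonreal zeros.

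\textbf{Integer $\beta=n$.} Here $R_n(z)=1/P(z)$ with $P(z)=\prod_{j=n+2}^{2n+2}(z+j)$, so $1/\phi_n$ is rational and the nonzero zeros are the $n$ remaining roots of $P(z)-P(0)$. Substitute $z=w-c$ with $c=(3n+4)/2$, so that $P(z)=Q(w)$ and $Q$ has symmetric real roots $a_j\in[-n/2,n/2]$. Pairing the factors $w^2-a_j^2$ and using $c^2>2a_j^2$, one gets for all complex $w$:
\[
|w|<c\ \Longrightarrow\ |Q(w)|<Q(c)=P(0).
\]
For real $|w|>c$ one gets $|Q(w)|>Q(c)$. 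Consequently the only possible real zeros are at $w=\pm c$, that is, at $z=0$ and $z=-3n-4$. The value $z=-3n-4$ is a zero precisely when $Q$ is even, i.e.\ when $n$ is odd.

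Simplicity follows from Gauss--Lucas. The critical points of $P$ are real, lie in the root hull $[-2n-2,-n-2]$, and there $|P|<P(0)$. So no root of $P-P(0)$ is also a critical point, and all zeros are simple. Counting $n$ zeros in total gives the stated dichotomy between even and odd $n$.

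\textbf{Noninteger $\beta$ (main obstacle).} I will count via the argument principle on a large contour: the boundary of a disc of radius $N+\tfrac12$ shifted to avoid poles. The poles of $\phi_\beta$ inside are $\{-\beta-2-k\}$. By the Rouché step above, the real zeros fill the intervals between consecutive poles and zeros of $R_\beta$, one per interval in the far region, except for a finite defect near the origin. Matching zero count with pole count plus the winding number contributes the total, and subtracting the real zeros leaves the nonreal count.

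The parity of $n_\beta$ enters through the sign of $R_\beta$ on $(-\beta-2,0)$ and on the first few intervals. The factor $\sin\pi(z+2\beta+3)/\sin\pi(z+\beta+2)$ changes sign pattern according to $\theta_\beta$ and the parity of $n_\beta$, and this determines whether an extra real crossing can occur.

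As a cross-check, and for the upper bound $n_\beta+1$ in the odd case, I would use the generalised Nevanlinna structure. The number of nonreal zeros in the upper half-plane is bounded by the number $\kappa$ of negative squares of the associated kernel, obtained by perturbing from the integer case $n_\beta$ or $n_\beta+1$ via continuity in $\beta$ (zeros move continuously and can leave the real axis only in colliding pairs). In the extremal case of the odd-$n_\beta$ bound, a multiple real zero would absorb a pair and reduce the nonreal count, so attaining the bound forces all real zeros to be simple. Making the exact counting for noninteger $\beta$ rigorous is where I expect the real work.
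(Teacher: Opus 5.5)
The location, strip-finiteness and integer-$\beta$ parts are sound. Your far-field Rouch\'e step is also a legitimate, more elementary substitute for the Krein--Langer argument, but only once it is stated correctly. As written it is backwards: $R_\beta$ is \emph{small}, of order $|z|^{-\beta-1}$, away from its poles. So for large $|z|$ the solutions of $R_\beta=R_\beta(0)$ sit in small discs around the poles $-\beta-2-k$, one per disc, and are therefore real and simple.

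The genuine gap is the noninteger count, which is the substance of the theorem. Take a large contour on which $|R_\beta|\le R_\beta(0)/2$; the winding number is then $0$, and this is what Lemma~\ref{lem:g} supplies. The argument principle on such a contour only yields ``zeros inside $=$ poles inside''. To turn this into ``exactly $n_\beta$'' or ``at most $n_\beta+1$'' nonreal zeros, you need the \emph{exact} number of real zeros inside the contour. That includes the bounded region your Rouch\'e argument does not reach, and sign patterns only give the parity of the number of crossings on each interval.
\begin{itemize}
\item On $(-\beta-2-j,-\beta-1-j)$ with $j$ even and $2\le j\le n_\beta+1$, we have $R_\beta>0$ and $R_\beta\to+\infty$ at both endpoints. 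Parity therefore allows two solutions (or a double one), which would lower the nonreal count by two. The paper excludes this with the quantitative bound $R_\beta(z)>1/(j!(n_\beta+2-j)!)>R_\beta(0)$ there (Lemma~\ref{lem:nozero}).
\item Exactly one zero in \emph{every} $\mathcal J_n$ for even $n_\beta$, small $n$ included, needs a monotonicity argument. This is the log-derivative computation in Proposition~\ref{prop:realzeroesEven}.
\item For odd $n_\beta$, both the bound and the simplicity clause come from the exact identity \eqref{eq:odd-zero-count}. That identity uses at least one zero in each $\mathcal I_k$, exactly one simple zero in $\mathcal I_k$ for $k\ge n_\beta+2$, no zeros in the $\mathcal J_k$, and Lemma~\ref{lem:nozero}.
\end{itemize}

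Your fallbacks do not close this.
\begin{itemize}
\item With the rank bound of Section~\ref{sec:nevanlinna-preliminaries}, $\kappa\le n_\beta+1$, the Nevanlinna route gives at most $2(n_\beta+1)$ nonreal zeros. Reaching $n_\beta+1$ would need the sharper bound $\kappa\le(n_\beta+1)/2$. That does follow, since only the terms $m\le n_\beta$ with $m\equiv n_\beta\pmod 2$ have $\varepsilon_\beta c_{\beta,m}<0$, but you do not derive it. It also still gives nothing exact for even $n_\beta$.
\item Continuity in $\beta$ does not conserve zeros. As $\beta$ tends to an integer, the poles $-\beta-2-k$ of $R_\beta$ coalesce with its zeros $-2\beta-3-k'$, and infinitely many real zeros of $\phi_\beta$ disappear into these pairs; the integer case has only $n$ nonzero zeros. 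So ``zeros leave the real axis only in colliding pairs'' controls nothing by itself.
\item ``A multiple real zero absorbs a pair'' is a heuristic, not a proof, without an exact count.
\end{itemize}

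Finally, a minor point in the integer case: the claim $|w|<c\Rightarrow|Q(w)|<Q(c)$ for complex $w$ is false. For $n=1$, $Q(w)=w^2-\tfrac14$ and $c=\tfrac72$, the point $w=3.49i$ gives $|Q(w)|\approx12.43>12=Q(c)$. Only the real-axis version holds, but that is all your real-zero location and Gauss--Lucas simplicity arguments actually use.
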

    
  For $\beta>0$, let us define
\[
        \mathcal E_\beta
        :=
        \{\rho\in\mathcal Z_\beta:
        \rho\notin\Rb\text{ or }m_\rho>1\},
        \qquad\text{and}\qquad
        \mathcal S_\beta
        :=
        \mathcal Z_\beta\setminus\mathcal E_\beta.
\]
By Proposition~\ref{appendix:zeros:integer_beta} for integer $\beta$, and Proposition~\ref{prop:complexZero} otherwise, the set
$\mathcal E_\beta$ is finite, while every element of
$\mathcal S_\beta$ is real and simple.

\begin{theorem}[Global residue representation for $\beta>0$]
\label{thm:positive-global-results}
Let $\beta>0$. Then
\begin{equation}\label{eq:u-positive-global-results}
        U_\beta(\D x)
        =
        \eta_\beta\delta_0(\D x)
        +
        \left[
        \frac1{\mu_\beta}
        +
        \sum_{\rho\in\mathcal E_\beta}
        u_{\beta,\rho}(x)
        +
        \sum_{s\in\mathcal S_\beta}
        \frac{e^{sx}}{\phi_\beta'(s)}
        \right]\D x.
\end{equation}
The last series converges locally absolutely and uniformly for $x>0$.
Equivalently, as an identity of meromorphic functions,
\begin{equation}\label{eq:reciprocal-positive-global-results}
        \frac1{\phi_\beta(z)}
        =
        \eta_\beta
        +
        \frac1{\mu_\beta z}
        +
        \sum_{\rho\in\mathcal E_\beta}
        \mathcal P_{\beta,\rho}(z)
        +
        \sum_{s\in\mathcal S_\beta}
        \frac1{\phi_\beta'(s)(z-s)},
\end{equation}
where the last series converges locally absolutely and uniformly on
$\Cb\setminus\mathcal Z_\beta$. Consequently, for every $n\ge2$,
\begin{equation}\label{eq:EDn-positive-global-results}        \Eb D_n
        =
        \frac{h_{n-1}}{\mu_\beta}
        -\frac{\phi_\beta''(0)}{2\mu_\beta^2}
        -\sum_{\rho\in\mathcal E_\beta}
        \mathcal B_{\beta,\rho}(n)
        -\sum_{s\in\mathcal S_\beta}
        \frac{\Gamma(-s)\Gamma(n)}
        {\phi_\beta'(s)\Gamma(n-s)},
\end{equation}
and the last series converges absolutely.
\end{theorem}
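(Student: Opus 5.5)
The plan is to first establish the meromorphic identity \eqref{eq:reciprocal-positive-global-results} and then deduce the other two statements from it. The measure statement \eqref{eq:u-positive-global-results} follows by Laplace inversion, and \eqref{eq:EDn-positive-global-results} follows by integrating $F_n$ against $U_\beta$ via Theorem~\ref{thm:potential-identities-results}.

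\textbf{Step 1: a Mittag-Leffler expansion of $1/\phi_\beta$.} Write
\[
\phi_\beta(z)=B(\beta+2,\beta+1)-\Gamma(\beta+1)\frac{\Gamma(z+\beta+2)}{\Gamma(z+2\beta+3)}.
\]
By Stirling, $B(z+\beta+2,\beta+1)\sim\Gamma(\beta+1)z^{-\beta-1}$ uniformly away from the negative real axis, so $1/\phi_\beta(z)\to\eta_\beta$ there.

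Along the negative axis the Gamma ratio oscillates and grows. I would therefore use the reflection formula to rewrite the ratio as
\[
\frac{\sin(\pi(z+2\beta+3))}{\sin(\pi(z+\beta+2))}\cdot\frac{\Gamma(-z-2\beta-2)}{\Gamma(-z-\beta-1)}.
\]
This shows that, for large negative $\Re z$, the zeros of $\phi_\beta$ are the solutions of a perturbed equation $\sin(\pi(z+2\beta+3))/\sin(\pi(z+\beta+2))\approx c|z|^{-\beta-1}$. Since the right side is small, the zeros sit close to the zeros of $\sin(\pi(z+2\beta+3))$, that is, one real simple zero near each $-k-2\beta-3$. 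This is consistent with the finiteness of $\mathcal E_\beta$ given by Theorem~\ref{thm:positive-zero-results} and the cited propositions.

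Next I would choose contours $\Gamma_N$: circles, or rectangles, through the points $-N-2\beta-3+1/2$ on the negative axis. On these contours $|\phi_\beta|$ is bounded below uniformly in $N$; this follows from the sine-ratio being bounded away from zero at half-shifted points, together with the decay estimate elsewhere. Applying the Cauchy residue theorem to $\frac{1}{\phi_\beta(w)(w-z)}$ on $\Gamma_N$, the contour integral of $(1/\phi_\beta(w)-\eta_\beta)/(w-z)$ tends to $0$. The residues at $0$ give $1/(\mu_\beta z)$, the residues at the finite set $\mathcal E_\beta$ give the principal parts $\mathcal P_{\beta,\rho}$, and the residues at $\mathcal S_\beta$ give the simple terms $1/(\phi_\beta'(s)(z-s))$.

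\textbf{Step 2: absolute convergence of the simple-pole series.} I need $\sum_s|1/\phi_\beta'(s)|/|s|<\infty$. From the asymptotic equation, $\phi_\beta'(s)$ at a zero near $-k-2\beta-3$ has modulus of order $k^{\beta+1}$ times a constant. The series therefore converges absolutely, and the convergence is locally uniform off $\mathcal Z_\beta$. \textbf{This uniform lower bound on the contours, and the derivative asymptotics at the zeros, is the main obstacle.} I would handle it by carefully separating the regions $|\Im w|\le 1$ and $|\Im w|>1$ on $\Gamma_N$, using Stirling with the reflection formula in the first region and the plain decay estimate in the second.

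\textbf{Step 3: inversion to the potential measure.} Each term $1/(z-\rho)^\ell$ is the Laplace transform of $e^{\rho x}x^{\ell-1}/(\ell-1)!$ for $\Re z>\Re\rho$; $1/(\mu_\beta z)$ corresponds to $1/\mu_\beta$, and $\eta_\beta$ to $\eta_\beta\delta_0$. For $x>0$ the series $\sum_s e^{sx}/\phi_\beta'(s)$ converges absolutely and locally uniformly, since $e^{sx}$ decays geometrically in $k$. Termwise Laplace transforms are justified by Fubini for $\Re z>0$. This is immediate for the tail on $[\delta,\infty)$; near $x=0$ I would instead compare the Laplace transforms directly, using that the series of transforms converges. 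Uniqueness of the Laplace transform then identifies $U_\beta$.

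\textbf{Step 4: the mean height.} By Theorem~\ref{thm:potential-identities-results}, $\Eb D_n=\int F_n\,\D U_\beta$, and $F_n(0)=1$, so the atom contributes $\eta_\beta$. Compute
\[
\int_0^\infty\bigl(1-(1-e^{-x})^{n-1}\bigr)\D x=h_{n-1},
\]
and, for $\Re s<0$,
\[
\int_0^\infty e^{sx}\bigl(1-(1-e^{-x})^{n-1}\bigr)\D x=-\frac1s-B(-s,n).
\]
Differentiating in $\rho$ handles the higher-order terms and produces $\mathcal B_{\beta,\rho}(n)$. The sum of the $-1/s$ and $-\partial$ constants, together with $\eta_\beta$, equals the constant term of the Laurent expansion of $1/\phi_\beta-1/(\mu_\beta z)$ at $0$. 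Evaluating the identity of Step 1 at $z\to0$, this constant term is $-\phi_\beta''(0)/(2\mu_\beta^2)$. Interchanging the sum and the integral is justified because the terms satisfy $B(-s,n)=\Gamma(-s)\Gamma(n)/\Gamma(n-s)$, which is $O(|\Gamma(-s)|\,n^{s})$. Combined with the bound on $1/\phi_\beta'(s)$, this gives absolute convergence.
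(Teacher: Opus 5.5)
Your overall architecture matches the paper's: a contour Mittag--Leffler expansion of $1/\phi_\beta-\eta_\beta$, then absolute convergence of the simple-zero series, Laplace uniqueness, and integration of $F_n$. However, the picture of the zeros on which Steps 1--2 rest is inverted. Since $\Gamma(-z-2\beta-2)/\Gamma(-z-\beta-1)\asymp|z|^{-\beta-1}$, the equation $\phi_\beta(z)=0$, i.e.\ your reflected ratio equal to $c(\beta)=A_\beta/\Gamma(\beta+1)$, forces $\sin(\pi(z+2\beta+3))/\sin(\pi(z+\beta+2))\approx c(\beta)|z|^{\beta+1}$. This ratio is \emph{large}, not small. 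So the zeros lie next to the zeros of the denominator, that is, next to the poles $p_k=-k-\beta-2$ of $\phi_\beta$, at distance $\asymp k^{-\beta-1}$. At your points $-k-2\beta-3$ one has $1/\Gamma(z+2\beta+3)=0$, hence $\phi_\beta=A_\beta\neq0$ there; this is the paper's observation $f_\beta(-n-\beta-1)=c(\beta)$.

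Consequently your mechanism for the contour estimate is the wrong one. The quantity $1/\phi_\beta-\eta_\beta=B(z+\beta+2,\beta+1)/(A_\beta\phi_\beta(z))$ is small near the negative axis only if $\sin(\pi(z+\beta+2))$ stays away from $0$, i.e.\ if the crossing point stays a fixed distance from the $p_k$. A sine ratio bounded away from zero does not give this. Your crossing $-N-2\beta-\tfrac52$ is at distance $|\theta_\beta-\tfrac12|$ from the $p_k$. For $\theta_\beta=\tfrac12$ (e.g.\ $\beta=\tfrac32$), every $\Gamma_N$ therefore passes through a pole of $\phi_\beta$ lying within $O(N^{-\beta-1})$ of a zero. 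There $B$ is unbounded on $\Gamma_N$ and $1/\phi_\beta-\eta_\beta=-\eta_\beta$ at the crossing, so the uniform smallness you invoke fails. A finer local estimate, or simply moving the crossing, would repair this. The paper's Lemma~\ref{lem:g} avoids the issue by taking $\gamma=1-\theta_\beta/2$: the left side then sits at distance $\theta_\beta/2$ from the poles, and the sine ratio is bounded by $1/\sin(\pi\gamma)$.

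The same error leaves Step~2, which you rightly call the crux, unsupported. If the zeros really sat near $-k-2\beta-3$, then $|\sin(\pi(z+\beta+2))|=|\sin\pi\beta|$ there, giving $|\phi_\beta'|\asymp k^{-\beta-1}$, and $\sum_s1/(|s||\phi_\beta'(s)|)$ would diverge. The order $k^{\beta+1}$ you assert is in fact true, but for a different reason. Let $R_k$ be the residue of $B(z+\beta+2,\beta+1)$ at $p_k$, so $|R_k|\asymp k^{-\beta-1}$. Then $s_k-p_k\approx R_k/A_\beta$, and hence $\phi_\beta'(s_k)\approx A_\beta^2/R_k$.

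You have two ways to close Step~2. One is to carry out this corrected local analysis, controlling the regular part of $B$ near $p_k$. The other is the paper's shortcut, which needs no derivative asymptotics at all. By Propositions~\ref{prop:realzeroesEven} and~\ref{prop:realzeroesOdd}, the residues $1/\phi_\beta'(s)$ have a fixed sign for all sufficiently negative $s\in\mathcal S_\beta$. Evaluating the contour-grouped expansion at a real $z_0>0$ then upgrades grouped convergence to $\sum_s1/(|\phi_\beta'(s)|(z_0-s))<\infty$, which is all Steps~3--4 need.

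Finally, in Step~4 the bound $B(-s,n)=O(|\Gamma(-s)|n^{s})$ is useless for summing over $s$ at fixed $n$, since $|\Gamma(-s)|n^{s}\to\infty$ as $s\to-\infty$. Use instead $B(-s,n)\le\int_0^1t^{-s-1}\D t=1/|s|$, together with Tonelli and $0\le F_n\le1$.
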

The proof of the last theorem is presented in Section~\ref{sec:potential-positive-beta-proof} of the appendix.

   The following formulation isolates the finitely many zeros in a prescribed
strip and gives the error bound needed for asymptotics. Put
    \begin{equation}\label{eq:F-positive-results}
            F_\beta(z):=\frac1{\phi_\beta(z)}-\eta_\beta.
    \end{equation}
    For $\sigma>0$ such that $\phi_\beta$ has no zero on $\re z=-\sigma$, and
    for $x>0$, define
    \begin{equation}\label{eq:r-positive-results}
            r_{\beta,\sigma}(x)
            :=
            \frac1{2\pi i}
            \int_{-\sigma-i\infty}^{-\sigma+i\infty}
            e^{zx}F_\beta(z)\D z.
    \end{equation}
    
    \begin{theorem}[Finite-strip expansion for $\beta>0$]
    \label{thm:positive-strip-results}
    Let $\beta>0$ and let $\sigma>0$ be as above. Then
    \[
            r_{\beta,\sigma}(x)
            =
            O_{\beta,\sigma}(e^{-\sigma x}),
            \qquad\text{as }x\to\infty,
    \]
    and
    \begin{equation}\label{eq:u-positive-strip-results}
            U_\beta(\D x)
            =
            \eta_\beta\delta_0(\D x)
            +
            \left[
            \frac1{\mu_\beta}
            +
            \sum_{\rho\in\mathcal Z_{\beta,\sigma}}
            u_{\beta,\rho}(x)
            +
            r_{\beta,\sigma}(x)
            \right]\D x.
    \end{equation}
    Consequently, as $n\to\infty$,
    \begin{equation}\label{eq:EDn-positive-strip-results}
            \Eb D_n
            =
            \frac{h_{n-1}}{\mu_\beta}
            -
            \frac{\phi_\beta''(0)}{2\mu_\beta^2}
            -
            \sum_{\rho\in\mathcal Z_{\beta,\sigma}}
            \mathcal B_{\beta,\rho}(n)
            +
            O_{\beta,\sigma}(n^{-\sigma}).
    \end{equation}
    The nonreal terms are grouped with their complex conjugates, and the displayed
    expressions are real. Thus, by moving the contour across any prescribed finite
    collection of zeros, one obtains an asymptotic expansion to arbitrary fixed
    algebraic order.
    \end{theorem}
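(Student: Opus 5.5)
Since $F_\beta(z)=1/\phi_\beta(z)-\eta_\beta$ is the Laplace transform of the absolutely continuous part of $U_\beta$, the plan is to recover that density by Bromwich inversion along $\re z=c>0$ and then shift the contour to $\re z=-\sigma$. The residue at $z=0$ gives the constant $1/\mu_\beta$, and the residue at each $\rho\in\mathcal Z_{\beta,\sigma}$ gives $u_{\beta,\rho}(x)$. These residues come from expanding $e^{zx}$ about $\rho$ against the principal part \eqref{eq:principal-part-positive-results}. The poles of $\phi_\beta$ are zeros of $1/\phi_\beta$, so they contribute nothing. By \eqref{eq:Z-strip-results} only finitely many zeros lie in the strip, so the sum is finite.

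Growth of $F_\beta$ on vertical lines is the key input. From \eqref{eq:phi-beta-meromorphic-results}, Stirling gives $B(z+\beta+2,\beta+1)=\Gamma(\beta+1)z^{-\beta-1}(1+\bo(1/z))$ as $|\Im z|\to\infty$, uniformly for $\re z$ in compact sets. Hence $\phi_\beta(z)\to B(\beta+2,\beta+1)$, and
\[
F_\beta(z)=\frac{B(z+\beta+2,\beta+1)}{\phi_\beta(\infty)\phi_\beta(z)}=\bo(|z|^{-\beta-1}).
\]
Since $\beta>0$, this is integrable on vertical lines. This decay has three consequences:
\begin{enumeratei}
\item The Bromwich integral converges absolutely.
\item The horizontal segments at $\Im z=\pm T$ vanish as $T\to\infty$, which justifies the contour shift. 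One chooses $T$ to avoid zeros, which is possible because the zeros in the strip are finite.
\item $r_{\beta,\sigma}(x)=e^{-\sigma x}\frac1{2\pi}\int e^{iyx}F_\beta(-\sigma+iy)\D y$, so $|r_{\beta,\sigma}(x)|\le e^{-\sigma x}\frac1{2\pi}\|F_\beta(-\sigma+i\cdot)\|_{L^1}$.
\end{enumeratei}
To identify the inverse with the density of $U_\beta$, I would note that the inverse is continuous and its Laplace transform is $F_\beta$ on $\re z>0$. By uniqueness of Laplace transforms, it is the density of $U_\beta-\eta_\beta\delta_0$. Conjugate zeros give conjugate terms because $\phi_\beta(\bar z)=\overline{\phi_\beta(z)}$, so the expression is real. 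This proves \eqref{eq:u-positive-strip-results}.

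For \eqref{eq:EDn-positive-strip-results}, I would insert the expansion into $\Eb D_n=\mathcal U_\beta F_n$ from Theorem~\ref{thm:potential-identities-results}. The atom contributes $\eta_\beta F_n(0)=\eta_\beta$. The constant term contributes $\mu_\beta^{-1}\int_0^\infty F_n=\mu_\beta^{-1}h_{n-1}$; this follows from substituting $u=1-e^{-x}$. Each zero $\rho$ contributes the integral
\[
\int_0^\infty e^{\rho x}x^{\ell-1}\bigl[1-(1-e^{-x})^{n-1}\bigr]\D x .
\]
I would evaluate it for $\ell=1$ through $\int_0^\infty e^{\rho x}(1-(1-e^{-x})^{n-1})\D x=-1/\rho-B(-\rho,n)$, which is valid for $\re\rho<0$, and obtain $\ell>1$ by differentiating in $\rho$. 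This produces $-\mathcal B_{\beta,\rho}(n)$ plus terms coming from $-1/\rho$ and its derivatives. Collecting these $\rho$-only terms with $\eta_\beta$ should give the finite part of $1/\phi_\beta$ at $0$, namely $-\phi_\beta''(0)/(2\mu_\beta^2)$. To see this without tracking the constants term by term, one checks $n=1$ formally, or notes that the constant terms equal $\lim_{z\to0}\bigl(1/\phi_\beta(z)-1/(\mu_\beta z)\bigr)$. That limit equals the Bromwich-shifted identity evaluated at $z=0$, where the remainder term contributes $\int_0^\infty r_{\beta,\sigma}$.

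The remainder contribution $\int_0^\infty r_{\beta,\sigma}(x)F_n(x)\D x$ splits into a constant and a decaying part. Writing $F_n=1-(1-e^{-x})^{n-1}$, the constant $\int_0^\infty r_{\beta,\sigma}$ is absorbed into the finite part. The other piece, $-\int_0^\infty r_{\beta,\sigma}(x)(1-e^{-x})^{n-1}\D x$, I would handle by Fubini on the contour representation. It equals $-\frac1{2\pi i}\int_{\re z=-\sigma}F_\beta(z)B(-z,n)\D z$. Using $|B(-z,n)|\le C|\Gamma(-z)|n^{-\sigma}$ uniformly on the line, together with the decay of $F_\beta$ and the exponential decay of $\Gamma$ in $\Im z$, this is $\bo(n^{-\sigma})$.

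I expect the main obstacle to be the bookkeeping that collects all constant terms into $-\phi_\beta''(0)/(2\mu_\beta^2)$, together with justifying Fubini near $x=0$, where $r_{\beta,\sigma}$ must be integrable. The latter holds because $r_{\beta,\sigma}$ is bounded, since $F_\beta\in L^1$ on the line.
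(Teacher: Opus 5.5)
Your overall structure matches the paper's. You invert along $\re z=c>0$ and shift to $\re z=-\sigma$ using $F_\beta(z)=\bo(|\Im z|^{-\beta-1})$ on vertical strips. You bound $|r_{\beta,\sigma}(x)|\le A_{\beta,\sigma}e^{-\sigma x}$ with $A_{\beta,\sigma}=\frac1{2\pi}\int_{\Rb}|F_\beta(-\sigma+it)|\,\D t$. You identify the constant by letting $z\downarrow0$ in the Laplace transform of the density decomposition. Two minor remarks. First, the paper gets the density by Fourier inversion of the finite measure $e^{-cx}(U_\beta-\eta_\beta\delta_0)(\D x)$, whose Fourier transform $t\mapsto F_\beta(c+it)$ is integrable. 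This avoids your unproved assertion that the Bromwich integral has Laplace transform $F_\beta$. Second, your alternative ``check $n=1$'' cannot identify the constant: $F_1\equiv0$, and nothing at $n=1$ involves $\phi_\beta''(0)$. Only your $z\downarrow0$ argument does the job.

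The step that fails as written is the final remainder estimate. The bound $|B(-z,n)|\le C|\Gamma(-z)|n^{-\sigma}$ uniformly on $\re z=-\sigma$ is false, and $B(-z,n)$ has no exponential decay in $\Im z$. Write $-z=\sigma-it$. For integer $n$ one has $B(\sigma-it,n)=\Gamma(n)/\prod_{k=0}^{n-1}(\sigma+k-it)$, so $|B(-z,n)|\sim\Gamma(n)|t|^{-n}$ as $|t|\to\infty$. By contrast, $|\Gamma(\sigma-it)|$ decays like $|t|^{\sigma-1/2}e^{-\pi|t|/2}$. In other words, $\Gamma(n)/\Gamma(n-z)$ grows like $e^{\pi|t|/2}$ and cancels the decay of $\Gamma(-z)$, and the approximation $\Gamma(n)/\Gamma(n-z)\approx n^{z}$ is not uniform in $\Im z$. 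The conclusion $\bo(n^{-\sigma})$ is still true, via a cruder but uniform bound. On $\re z=-\sigma$,
\[
|B(-z,n)|\le\int_0^1 y^{\sigma-1}(1-y)^{n-1}\,\D y=B(\sigma,n)\le\Gamma(\sigma)(n-1)^{-\sigma},
\]
and together with $F_\beta(-\sigma+i\cdot)\in L^1$ this gives the claim. Simpler still, and this is what the paper does, you do not need the contour representation at all. Integrate your own pointwise bound (iii) against $(1-e^{-x})^{n-1}$:
\[
\left|\int_0^\infty r_{\beta,\sigma}(x)(1-e^{-x})^{n-1}\,\D x\right|\le A_{\beta,\sigma}B(\sigma,n)\le A_{\beta,\sigma}\Gamma(\sigma)(n-1)^{-\sigma}.
\]
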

    
    \begin{remark}[Simple and multiple zeros]\label{rem:multiple-zeros-results}
    If $\rho$ is simple, then
    \[
            c_{\rho,1}=\frac1{\phi_\beta'(\rho)},
            \qquad
            u_{\beta,\rho}(x)=\frac{e^{\rho x}}{\phi_\beta'(\rho)},
            \qquad\text{and}\qquad
            \mathcal B_{\beta,\rho}(n)
            =
            \frac{\Gamma(-\rho)\Gamma(n)}
            {\phi_\beta'(\rho)\Gamma(n-\rho)}.
    \]
    If $\rho$ has multiplicity $m$, its contribution to $\Eb D_n$ is a linear
    combination of derivatives of $B(-\rho,n)$ and is of order
    $n^{\re\rho}(\log n)^{m-1}$. The remainder remains
    $O_{\beta,\sigma}(n^{-\sigma})$.
    \end{remark}
    
    \subsection{Maximum continuous-time height}\label{sec:max-height}
    
    Our last result concerns the maximum continuous-time height, that is, the height
    of the tree itself.  Write
    \[
            D_n^*:=\max_{1\le i\le n}D_{n,i},
    \]
    where $D_{n,i}$ is the continuous-time height of leaf $i$, and define
    \begin{equation}\label{eq:r-beta-results}
            r_\beta:=\phi_\beta(1)=B(\beta+2,\beta+2).
    \end{equation}
    If $(P_i(t))_{i\ge1}$ are the ranked fragment frequencies, following \cite{BertoinRouaultDiscretization}, we define the associated
    additive martingale
    \begin{equation}\label{eq:W-beta-results}
            W_\beta(t):=e^{r_\beta t}\sum_{i\ge1}P_i(t)^2.
    \end{equation}
    
    \begin{theorem}[Height of the sampled tree]\label{thm:max-height-results}
    For every $\beta>-2$,
    \[
            W_\beta(t)\longrightarrow W_\beta(\infty),
            \qquad\text{almost surely and in }L^1,
    \]
    where $W_\beta(\infty)>0$ almost surely and
    $\Eb W_\beta(\infty)=1$.  Moreover,
    \begin{equation}\label{eq:max-height-lln-results}
            \frac{D_n^*}{\log n}
            \longrightarrow
            \frac2{r_\beta},
            \qquad
            \text{in probability},
    \end{equation}
    and, for every $x\in\Rb$,
    \begin{equation}\label{eq:max-height-fluctuations-results}
            \P\bigl(r_\beta D_n^*-2\log n\le x\bigr)
            \longrightarrow
            \Eb\left[
            \exp\left(-\frac12e^{-x}W_\beta(\infty)\right)
            \right].
    \end{equation}
    \end{theorem}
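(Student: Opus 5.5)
We prove Theorem~\ref{thm:max-height-results} in three stages: martingale convergence, a conditional Poisson approximation for $C_n$ at a time slightly below the critical scale, and then a final stretch on which each pair must survive.

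\textbf{Stage 1: the martingale $W_\beta$.} That $W_\beta$ is a martingale follows from the branching property and homogeneity: given $\mathcal F_s$, each fragment of mass $P_i(s)$ evolves as an independent scaled copy of the whole process, and $\Eb\sum_iP_i(t)^2=e^{-r_\beta t}$ because $\Eb e^{-\xi_t}=e^{-t\phi_\beta(1)}$. For uniform integrability, nondegeneracy and $\Eb W_\beta(\infty)=1$, we invoke the Biggins-type criterion for additive martingales of homogeneous fragmentations \cite{BertoinRouaultDiscretization}. With $\kappa(p):=\phi_\beta(p-1)$, the martingale with parameter $p=2$ is uniformly integrable provided $p\kappa'(p)<\kappa(p)$, that is,
\[
  2\phi_\beta'(1)<\phi_\beta(1).
\]
This holds because $\phi_\beta$ is concave, nonnegative and vanishes at $0$. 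We would check it directly:
\[
  \phi_\beta(1)-2\phi_\beta'(1)=\int_0^1\bigl(1-s+2s\log s\bigr)s^{\beta+1}(1-s)^\beta\D s ,
\]
and $1-s+2s\log s$ is not of one sign, so this needs a small argument. A cleaner route is an $L^2$ bound computed via the branching property, valid when $\phi_\beta(3)<2\phi_\beta(1)$, combined with strict concavity. Positivity of the limit on all of $\Omega$ follows from the zero--one law: $\{W_\beta(\infty)=0\}$ is stable under the branching, and there is no extinction.

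\textbf{Stage 2: conditional Poisson approximation.} Fix $x$ and set $t_n:=(2\log n+x)/r_\beta$. Choose an intermediate time $s_n\to\infty$ with $s_n=o(\log n)$, say $s_n=\sqrt{\log n}$. Condition on $\mathcal F_{s_n}$, the fragmentation of the paintbox masses (not of the labels). The labels are iid uniforms, so we can write $C_n(t_n)=\sum_{i<j}\ind{i\sim j\text{ at }t_n}$. Conditionally,
\[
  \Eb\bigl[C_n(t_n)\,\big|\,\mathcal F_{s_n}\bigr]=\binom n2 e^{-r_\beta(t_n-s_n)}\sum_k P_k(s_n)^2=\binom n2 n^{-2}e^{-x}\,W_\beta(s_n)\to\tfrac12e^{-x}W_\beta(\infty).
\]
We then apply a Poisson approximation for sums of dependent indicators, either Chen--Stein in the local dependence form or the method of factorial moments. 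Pairs sharing a label, and triples landing in a common block, contribute a collision term of order
\[
  n^3\,\Eb\sum_k P_k(t_n)^3\approx n^3e^{-t_n\phi_\beta(2)}\to0,
\]
since $\phi_\beta(2)/r_\beta>3/2$. This inequality needs checking; it follows from concavity, since $\phi_\beta(2)\ge$ a suitable bound. Disjoint pairs in different blocks at time $s_n$ evolve independently given $\mathcal F_{s_n}$, so their dependence is handled by that conditioning. This yields
\[
  \P\bigl(C_n(t_n)=0\bigr)\to\Eb\exp\bigl(-\tfrac12e^{-x}W_\beta(\infty)\bigr),
\]
which is \eqref{eq:max-height-fluctuations-results}, because $\{D_n^*\le t\}=\{C_n(t)=0\}$.

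\textbf{Stage 3: the law of large numbers.} Statement \eqref{eq:max-height-lln-results} follows immediately: the limit in Stage~2 tends to $1$ as $x\to\infty$ and to $0$ as $x\to-\infty$, the latter because $W_\beta(\infty)>0$ almost surely.

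\textbf{Main obstacle.} The delicate step is the collision control in Stage~2, uniformly over the random configuration at $s_n$. The third-moment bound
\[
  \Eb\sum_kP_k(t)^3=e^{-t\phi_\beta(2)}
\]
together with the ratio $\phi_\beta(2)>\tfrac32\phi_\beta(1)$ is exactly what makes triples negligible. If that inequality is tight or fails for some $\beta$, we would instead truncate: work on the event that $\max_k P_k(s_n)$ is small, and use the conditional second-moment structure directly.
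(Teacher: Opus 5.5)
There is a genuine gap in Stage~2, at the step you yourself flag as the main obstacle. The inequality $\phi_\beta(2)>\tfrac32 r_\beta$ is false. For any binary conservative dislocation, $1-x^3-(1-x)^3=\tfrac32\bigl[1-x^2-(1-x)^2\bigr]$, so $\phi_\beta(2)=\tfrac32\phi_\beta(1)$ for every $\beta>-2$; for example, $\phi_0(2)=1/4$ and $\phi_0(1)=1/6$. Consequently $n^3\,\Eb\sum_kP_k(t_n)^3=n^3e^{-t_n\phi_\beta(2)}=e^{-3x/2}$, which does not tend to zero. No first-moment bound on the triple term can work.

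The triple term is negligible only in probability, because its mean is carried by rare, atypically large fragments. The paper handles this with $S_3(t)\le P_*(t)S_2(t)$, where $P_*$ is the largest fragment. It then proves separately that $nP_*(t_n)\to0$ in probability, by Markov's inequality on $S_{p+1}$ for some $p>1$ close to $1$ with $\phi_\beta(p)/(p+1)>r_\beta/2$. Your fallback (truncate on $\max_kP_k(s_n)$ small) specifies neither the needed scale, namely $P_*=\so(e^{-r_\beta t/2})$, nor how to prove it.

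Within your intermediate-time setup there is another possible repair. Note that $\Eb[n^3S_3(t_n)\mid\mathcal F_{s_n}]=e^{-3x/2}e^{s_n\phi_\beta(2)}S_3(s_n)$ is the $p=3$ additive martingale. It tends to $0$ because $3\phi_\beta'(2)<\phi_\beta(2)$, but that too must be argued.

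Separately, conditioning on $\mathcal F_{s_n}$ does not make the indicators of disjoint pairs lying in the same $s_n$-block independent. So the local-dependence Chen--Stein bound does not apply in the form you describe. Conditioning on $\mathbf P(t_n)$ itself makes the labels iid and gives $b_3=0$. The Poisson parameter $\binom n2S_2(t_n)\to\tfrac12e^{-x}W_\beta(\infty)$ then follows directly.

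Stage~1 has the sign of the criterion reversed. For $e^{t\kappa(p)}\sum_iP_i(t)^p$ with $\kappa(p)=\phi_\beta(p-1)$, uniform integrability requires $p\kappa'(p)>\kappa(p)$. As a sanity check, at $p=1$ the martingale is identically $1$ and $p\kappa'(p)-\kappa(p)=\mu_\beta>0$. So you need $2\phi_\beta'(1)>\phi_\beta(1)$.

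Your inequality $2\phi_\beta'(1)<\phi_\beta(1)$ is false: at $\beta=0$, $2\phi_0'(1)=2/9>1/6$. Concavity with $\phi_\beta(0)=0$ only gives $\phi_\beta'(1)\le\phi_\beta(1)$. The correct inequality follows by symmetrising. The quantity $2\phi_\beta'(1)-\phi_\beta(1)$ is the integral of $-s^2\log s-(1-s)^2\log(1-s)-s(1-s)$ against $s^\beta(1-s)^\beta$. This integrand is positive, using $-\log s\ge 1-s$ and $-\log(1-s)\ge s$. It is also the same inequality that drives the largest-fragment lemma.

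The $L^2$ route is not available in general. We have $\Eb W_\beta(t)^2\ge e^{t(2\phi_\beta(1)-\phi_\beta(3))}$, which blows up at $\beta=0$ ($\phi_0(3)=3/10<1/3$) and at $\beta=-1$ ($11/6<2$).

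Stage~3 is fine once Stage~2 is repaired. Deducing the law of large numbers from the mixed Gumbel limit works because the limit is a proper distribution function, given $0<W_\beta(\infty)<\infty$ almost surely. It is a legitimate shortcut compared with the paper's separate first and second moment argument.
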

    
    \begin{remark}\label{rem:critical-max-height-results}
    At the critical value $\beta=-1$, we have $r_{-1}=1$.  Hence
    Theorem~\ref{thm:max-height-results} resolves
    \cite[Open Problem~3]{beta2-arxiv} and identifies the constant there as
    $c=2$. In particular, the proposed value
$c=1+\mu_{-1}^{-1}+\tau_{-1}^2/(2\mu_{-1}^3)\approx1.878$ in
    \cite[equation~(41)]{beta2-arxiv} does not hold, and the centered fluctuations are instead the mixed
    Gumbel law in \eqref{eq:max-height-fluctuations-results}.
    \end{remark}

    \section{Framework}\label{sec:framework}
    
    \subsection{Exchangeable fragmentations}
    \label{sec:exchangeable}
    
    Let $\mathcal P$ be the space of partitions of $\Nb$. We say that a random partition is
    \emph{exchangeable} if its law is invariant under finite permutations of
    $\Nb$. By Kingman's paintbox theorem, conditionally on its asymptotic
    frequencies, an exchangeable partition is obtained by assigning the labels
    independently to blocks with the corresponding probabilities; see
    \cite[Chapter~2.3]{Bertoin-2006}. In the conservative binary case with
    frequencies $(x,1-x)$, every label therefore chooses one of the two daughter
    blocks independently, with probabilities $x$ and $1-x$.
    
    A \emph{homogeneous exchangeable fragmentation} is a c\`adl\`ag Markov process
    $\Pi=(\Pi(t),t\ge0)$ on $\mathcal P$, started from the one-block partition,
    whose law is exchangeable and which has the fragmentation property:
    conditionally on $\Pi(t)$, the different blocks evolve independently, each
    according to the same law. The term \emph{exchangeable} refers to the labels,
    and \emph{homogeneous} to the time scale not depending on the
    asymptotic frequency of a block; see
    \cite[Chapter~3]{Bertoin-2006}.
    
    A fragmentation without erosion is determined by its \textit{dislocation measure},
    which describes the intensity and the relative sizes of the fragments created at
    a split. If the dislocation measure $\nu$ is finite, every block waits an
    exponential time of rate $\nu([1/2,1])$ and the normalized measure
    $\nu/\nu([1/2,1))$ is the law of its relative fragment sizes. Infinite
    dislocation measures are also allowed. In this case the process is constructed
    from a Poisson point measure with intensity $\D t\otimes\nu$, and the usual
    integrability condition guarantees that every finite restriction has a finite
    rate of splits; see \cite[Section 3.1-2]{Bertoin-2006}.
    
    For the beta-fragmentation, we consider conservative binary fragmentations without erosion, that is, each fragment splits in exactly two parts and the overall mass is preserved. The
    dislocation measure is
    \[
            \nu_\beta(\D x)
            =
            x^\beta(1-x)^\beta\D x,
            \qquad \text{on }x\in[1/2,1),
    \]
    and the integrability condition needed to define a fragmentation process,
    see \cite[Chapter~3]{Bertoin-2006}, translates to
    \[
            \int_{1/2}^1(1-x)\nu_\beta(\D x)<\infty,
            \qquad
            \text{so $\beta>-2$}.
    \]
    At a dislocation with frequencies $(x, 1-x)$,
    the labels in a block are assigned independently to the two daughters.
    Consequently, a block containing $m$ labels splits into ordered daughter
    blocks of sizes $i$ and $m-i$ at rate
    \begin{equation}\label{eq:finite-split-rate-prelim}
            \widehat q_\beta(m,i)
            :=
            \frac12\binom mi
            B(i+\beta+1,m-i+\beta+1),
            \qquad \text{for }1\le i\le m-1.
    \end{equation}
    Moreover,
    \[
            \sum_{i=1}^{m-1}\widehat q_\beta(m,i)
            =
            \phi_\beta(m-1),
    \]
    and therefore, conditionally on a split,
    \begin{equation}\label{eq:q-beta-prelim}
            q_\beta(m,i)
            =
            \frac{\widehat q_\beta(m,i)}
            {\phi_\beta(m-1)}.
    \end{equation}
    Thus the restriction to $[n]$ is the canonical continuous-time beta-splitting
    process. This is the standard finite-restriction construction for homogeneous
    fragmentations as also described, for example, in
    \cite[Corollary~4]{Haas-Miermont-Pitman-Winkel-2008}.
    
    \subsubsection{The tagged fragment}\label{sec:tagged-preliminaries}
    
    An especially convenient object for analysis is the so-called
    \emph{tagged}, or \emph{typical}, fragment. Let $\Pi_1(t)$ be the block
    containing label $1$ and let $|\Pi_1(t)|$ denote its asymptotic frequency. Recall that by exchangeability, considering label $1$ is equivalent to considering any other fixed label.
    Then \cite[Theorem~3.2]{Bertoin-2006} gives
    \begin{equation}\label{eq:tagged-subordinator-prelim}
            |\Pi_1(t)|=e^{-\xi_t},
            \qquad\text{and}\qquad
            \Eb e^{-z\xi_t}=e^{-t\phi_\beta(z)}
            \quad\text{for }\Re z\ge0,
    \end{equation}
    for the subordinator $\xi$ with Laplace exponent $\phi_\beta$, defined in \eqref{eq:phi-beta-intro}. In the
    beta-fragmentation case, the L\'evy density of $\xi$ is
    \begin{equation}\label{eq:levy-density-prelim}
            \lambda_\beta(x)
            =
            e^{-(\beta+2)x}(1-e^{-x})^\beta,
            \qquad \text{for }x>0.
    \end{equation}
    Let $(P_i(t))_{i\ge1}$ be the ranked fragment frequencies. Recall the Kingman paintbox construction: the associated partition of $[n]$ at time $t$ can be constructed by dropping $n$ iid uniform random variables on $(0,1)$ and grouping the indices which fall in intervals of lengths $(P_i(t))$. Therefore symmetric quantities involving all fragments, for example the expected number of pairs in the same block, can be represented via $(P_i(t))$. They can also be connected with the \textit{typical} block and thus expressed via $\xi$. Indeed, by \cite[Proposition~2.8]{Bertoin-2006}, for every nonnegative
    Borel function $f$,
    \begin{equation}\label{eq:size-biased-identity-prelim}
            \Eb\left[\sum_{i\ge1}P_i(t)f(P_i(t))\right]
            =
            \Eb f\bigl(|\Pi_1(t)|\bigr)
            =
            \Eb f(e^{-\xi_t}).
    \end{equation}
    Equivalently, for every
    nonnegative $g$ with $g(0)=0$,
    \[
            \Eb\sum_{i\ge1}g(P_i(t))
            =
            \Eb\left[e^{\xi_t}g(e^{-\xi_t})\right].
    \]
    For example, taking $f(y)=y^{q-1}$ gives, for $q>1$,
    \begin{equation}\label{eq:power-sum-mean-prelim}
            \Eb\sum_{i\ge1}P_i(t)^q
            =
            \Eb|\Pi_1(t)|^{q-1}
            =
            e^{-t\phi_\beta(q-1)}.
    \end{equation}
    The last identity is closelyt connected with the additive martingales of Section~\ref{sec:max-height-proof}.
    
    \subsection{Meromorphic subordinators}
    \label{sec:meromorphic-subordinator-preliminaries}
    
    Following \cite{Kuznetsov-Kyprianou-Pardo-2012}, a subordinator is called
    \emph{meromorphic} if its L\'evy density is a positive discrete mixture of
    exponentials, that is,
    \begin{equation}\label{eq:meromorphic-levy-density-prelim}
            \lambda(x)
            =
            \sum_{k\ge1}c_ke^{-\rho_kx},
            \qquad \text{for some }
            c_k>0,
            \text{ and }
            0<\rho_1<\rho_2<\cdots,
    \end{equation}
    under the usual L\'evy integrability condition. Using the classical L\'evy--Khintchine representation, its Laplace exponent can then
    be written as
    \begin{equation}\label{eq:meromorphic-phi-prelim}
    \begin{split}
            \phi(z)
            =dz+\int_0^\infty(1-e^{-zx})\lambda(x)\D x
            =d z+
            \sum_{k\ge1}
            c_k\frac{z}{\rho_k(\rho_k+z)},
            \qquad\text{for } d\ge0,
    \end{split}
    \end{equation}
    which gives a meromorphic continuation to $\Cb$, with poles at $-\rho_k$, which also motivates the name of the class.
    Moreover, $\phi$ is a complete Bernstein function, and its nonzero zeros are
    real, simple and interlace with its poles; see
    \cite[Theorem~1 and Corollary~2]{Kuznetsov-Kyprianou-Pardo-2012}. This mainly stems from the fact that a complete Bernstein function is a Nevanlinna--Pick function and therefore preserves the upper and lower complex half-planes; see
    \cite[Theorem~6.2]{Schilling-Song-Vondracek-2026}. The general Nevanlinna--Pick class and its integral representation are discussed in Chapter~6 of the last reference.
    
    The importance of this class for the beta-fragmentation follows from
    \eqref{eq:levy-mixture-negative-results}: for $-2<\beta<0$, the L\'evy
    density $\lambda_\beta$ is a positive mixture of exponentials. The tagged
    subordinator is therefore meromorphic.
    
    The product and partial-fraction formulas in
    \cite{Kuznetsov-Kyprianou-Pardo-2012} are stated mainly in terms of the roots
    of $q-\Psi$ for $q>0$, where $\Psi$ is the Laplace exponent of the process. The result needed in our case, that is, $q=0$, for the potential measure of an
    unkilled subordinator is not explicitly stated there.
    We provide it below and give the proof in
    Appendix~\ref{app:meromorphic-potential-proof}.
    
    \begin{proposition}[Potential measure of a meromorphic subordinator]
    \label{prop:U-expression}
    Let $\xi$ be an unkilled meromorphic subordinator with Laplace exponent
    $\phi$ and finite mean $\mu=\phi'(0)$. Let $(-\rho_k)$ and
    $(s_k)$ denote its nonzero poles and zeros, respectively, and put $s_0=0$.
    The sequences may be finite or infinite and are indexed so that
    \[
            0>-\rho_1>s_1>-\rho_2>s_2>\cdots.
    \]
    The displayed inequalities are truncated if either sequence terminates.
    Then, for $\Re z>0$,
    \begin{equation}\label{eq:meromorphic-reciprocal-prelim}
            \frac1{\phi(z)}
            =
            \eta+
            \sum_{k\ge0}
            \frac1{\phi'(s_k)(z-s_k)},
            \qquad
            \text{where }
            \eta:=\lim_{z\to+\infty}\frac1{\phi(z)},
    \end{equation}
    and the potential measure of $\xi$ is
    \begin{equation}\label{eq:meromorphic-potential-prelim}
            U(\D x)
            =
            \eta\delta_0(\D x)
            +
            \left[
            \sum_{k\ge0}
            \frac{e^{s_kx}}{\phi'(s_k)}
            \right]\D x.
    \end{equation}
    Moreover, the residues $1/\phi'(s_k)$ are positive and the series converge
    locally uniformly in their respective domains.
    \end{proposition}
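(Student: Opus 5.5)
The plan is to prove the partial-fraction expansion \eqref{eq:meromorphic-reciprocal-prelim} first, using the Nevanlinna--Pick structure of complete Bernstein functions, and then to invert it term by term to obtain \eqref{eq:meromorphic-potential-prelim}.

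\emph{Step 1: sign structure and residues.} Since $\phi$ is a complete Bernstein function, it is a Nevanlinna--Pick function, so $1/\phi$ maps the upper half-plane into the lower half-plane. Hence $-1/\phi$ is again a Nevanlinna--Pick function. It is meromorphic on $\Cb$, and its poles are exactly the zeros $s_k$ of $\phi$, which are real and simple by \cite[Theorem~1 and Corollary~2]{Kuznetsov-Kyprianou-Pardo-2012}. Its zeros are the poles $-\rho_k$ of $\phi$.

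I would then invoke the classical representation of meromorphic Nevanlinna--Pick functions with real simple poles and zeros (see \cite[Chapter~6]{Schilling-Song-Vondracek-2026}):
\[
-\frac1{\phi(z)}=a+bz+\sum_k A_k\Bigl(\frac1{s_k-z}-\frac{s_k}{1+s_k^2}\Bigr),
\qquad A_k\ge0,\ b\ge0.
\]
This gives positivity of the residues directly: near $s_k$, $-1/\phi(z)\approx -1/(\phi'(s_k)(z-s_k))$, so $A_k=1/\phi'(s_k)>0$. Alternatively, positivity follows from the interlacing: $\phi$ is increasing on each interval between consecutive poles, so $\phi'(s_k)>0$. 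At $s_0=0$ we have $\phi'(0)=\mu>0$.

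\emph{Step 2: identifying the constants.} Along $z=x\to+\infty$, $1/\phi(x)$ decreases to $\eta\in[0,\infty)$ because $\phi$ is increasing, and $1/\phi(x)=O(1)$ there. This forces $b=0$. To show the compensating terms can be dropped, I would compare the convergent series $\sum_k A_k/(1+s_k^2)$ with $\sum_k A_k/|s_k|$. Convergence of the latter I would get from evaluating at $z=x>0$: each term $A_k/(x-s_k)$ is positive, and their sum is bounded by $1/\phi(x)<\infty$. Letting $x\to0^+$ is not possible because of the $k=0$ term, so I would evaluate at $x=1$, where $x-s_k\ge |s_k|$. This gives $\sum_{k\ge1}A_k/|s_k|<\infty$, so the series may be rearranged as $\sum_k A_k/(z-s_k)$ plus a constant.

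Letting $x\to\infty$, monotone convergence (each term tends to $0$ and is dominated by its value at $x=1$) shows that this constant equals $\eta$, which proves \eqref{eq:meromorphic-reciprocal-prelim}. Local uniform convergence on compact sets avoiding $\{s_k\}$ follows from the bound $|z-s_k|\ge c|s_k|$ for large $k$.

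\emph{Step 3: inversion.} For $\Re z>0$, we have $\frac1{z-s_k}=\int_0^\infty e^{-zx}e^{s_kx}\D x$. The sum $\sum_k e^{s_kx}/\phi'(s_k)$ has nonnegative terms. Tonelli's theorem, applied at real $z>0$, gives
\[
\int_0^\infty e^{-zx}\sum_k\frac{e^{s_kx}}{\phi'(s_k)}\,\D x=\frac1{\phi(z)}-\eta .
\]
The function $1/\phi$ is the Laplace transform of $U$ by \eqref{eq: U_beta}. Laplace transforms of measures are unique on $(0,\infty)$, so \eqref{eq:meromorphic-potential-prelim} follows.

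For local uniform convergence on $x\ge\epsilon>0$, I would use $e^{s_kx}\le e^{s_k\epsilon}$ together with $e^{s_k\epsilon}\le C_\epsilon/|s_k|$. Combined with $\sum_k A_k/|s_k|<\infty$, this gives a summable dominating sequence, and the Weierstrass M-test applies.

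The main obstacle I expect is Step 2, namely justifying the Nevanlinna representation without the compensators and showing that the additive constant is exactly $\eta$ with $b=0$. If citing the representation theorem proves delicate, the fallback is a direct contour argument. One would integrate $1/(\phi(w)(w-z))$ over circles through midpoints between consecutive poles and zeros, where the interlacing gives lower bounds on $|\phi|$ via the product formula of \cite{Kuznetsov-Kyprianou-Pardo-2012}.
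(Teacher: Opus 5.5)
Your overall route works: the Nevanlinna representation of $-1/\phi$, removal of the compensators, then termwise Laplace inversion and uniqueness. But Step~2 is circular at exactly the point you flagged as the obstacle. From the compensated representation you only know, for $x>0$,
\[
\frac1{\phi(x)}=-a-bx+\sum_k A_k\Bigl(\frac1{x-s_k}+\frac{s_k}{1+s_k^2}\Bigr).
\]
For $k\ge1$ the compensators are negative, and each bracket is itself negative once $x|s_k|>1$. So the claim that $\sum_k A_k/(x-s_k)$ is bounded by $1/\phi(x)$ is precisely the uncompensated formula you are trying to prove. As written, nothing controls $\sum_k A_k|s_k|/(1+s_k^2)$.

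There is also a smaller slip in the comparison. The inequality $1-s_k\ge|s_k|$ bounds $\sum_k A_k/(1-s_k)$ by $\sum_k A_k/|s_k|$, not the other way round. The direction you need is $1-s_k\le(1+|s_1|^{-1})|s_k|$ for $k\ge1$, which is true but has to be used.

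The fix uses the ingredient you only bring in afterwards, namely the finite limit $\eta$ of $1/\phi$ at $+\infty$. Subtract the representation at $1$ from the one at $x\ge1$; the compensators cancel:
\[
\frac1{\phi(1)}-\frac1{\phi(x)}=b(x-1)+\sum_k A_k\Bigl(\frac1{1-s_k}-\frac1{x-s_k}\Bigr).
\]
Every term on the right is nonnegative, and the left-hand side is at most $1/\phi(1)$. Letting $x\to\infty$ gives $b=0$. Monotone convergence then gives $\sum_k A_k/(1-s_k)=1/\phi(1)-\eta<\infty$.

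The same subtraction at a general $z$ gives
\[
\frac1{\phi(z)}-\frac1{\phi(1)}=\sum_k A_k\Bigl(\frac1{z-s_k}-\frac1{1-s_k}\Bigr).
\]
Both series converge absolutely, locally uniformly off $\{s_k\}$, so you can split them and obtain \eqref{eq:meromorphic-reciprocal-prelim} directly, constant $\eta$ included. Your Steps~1 and~3 then go through unchanged.

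For comparison, the paper avoids compensators altogether. It applies the meromorphic complete-Bernstein representation, Lemma~\ref{lemma:Nevanlinna}, to $z/\phi(z)$, which is again complete Bernstein. That expansion already has the form $\gamma'+\alpha'z+\sum_n B_n'\bigl(1/b_n'-1/(z+b_n')\bigr)$. Dividing by $z$ immediately gives the uncompensated series, and the constants $\gamma'=1/\mu$ and $\alpha'=\eta$ are read off from the limits at $0$ and $+\infty$. Your route needs only the general Nevanlinna--Pick representation, at the cost of this extra monotone-convergence step.
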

    
    \begin{remark}
    The last proposition provides a useful way of identifying the potential measure without carrying out an infinite contour inversion, which is a major technical step in \cite{Aldous-Janson-2025-Mellin}. Formally, since $1/\phi$ is the Laplace transform of $U$, one may try to apply a Bromwich-type inversion formula and move the contour across the poles of $1/\phi$, collecting their residues. This is the classical procedure described in \cite[Chapters~25-26]{Doetsch1974}, and an analogous residue method is used in \cite[Sections~6-7]{Aldous-Janson-2025-Mellin}. For an infinite collection of poles, however, one still has to justify the contour deformation, the vanishing of the outer contour and the interchange with the infinite residue sum. The point of Proposition~\ref{prop:U-expression} is that $z/\phi(z)$ is a meromorphic complete Bernstein, and hence a Nevanlinna--Pick, function for which we know that there exists a unique positive partial-fraction representation. The proof in Appendix~\ref{app:meromorphic-potential-proof}, based on the meromorphic Nevanlinna representation of \cite[Theorem~1, p.~197]{chebotarev1949routh}, identifies the candidate measure first and then uses the uniqueness of Laplace transforms. In this way, no  explicit contour arguments are needed.
    \end{remark}
    
    \subsubsection{Extended meromorphic subordinators}
    \label{sec:nevanlinna-preliminaries}
    
    For $\beta>0$, the L\'evy density in \eqref{eq:levy-density-prelim} is still
    positive, but its expansion in powers of $e^{-x}$ is no longer a positive
    mixture. Put
    \[
            a_{\beta,m}:=m+\beta+2,
            \qquad\text{and}\qquad
            c_{\beta,m}:=\frac{(-\beta)_m}{m!}.
    \]
    The binomial series gives, locally uniformly for $x>0$,
    \begin{equation}\label{eq:signed-levy-expansion-prelim}
            \lambda_\beta(x)
            =
            \sum_{m=0}^\infty
            c_{\beta,m}e^{-a_{\beta,m}x},
    \end{equation}
    and hence
    \begin{equation}\label{eq:signed-phi-expansion-prelim}
            \phi_\beta(z)
            =
            \sum_{m=0}^\infty
            c_{\beta,m}
            \frac{z}{a_{\beta,m}(a_{\beta,m}+z)},
    \end{equation}
    but the signs of $c_{\beta,m}$ are not all the same.
    For example,
    \[
            \lambda_{3/2}(x)
            =
            e^{-7x/2}
            \left(
            1-\frac32e^{-x}
            +\frac38e^{-2x}
            +\frac1{16}e^{-3x}
            +\cdots
            \right).
    \]
    This illustrates that, although the signs are not the same, they do not alternate indefinitely. More generally, if
    $\beta>0$ is not an integer and
    \[
            n_\beta:=\lfloor\beta\rfloor,
            \qquad\text{and}
            \qquad
            \varepsilon_\beta:=(-1)^{n_\beta+1},
    \]
    then
    \begin{equation}\label{eq:eventual-sign-prelim}
            \varepsilon_\beta c_{\beta,m}>0,
            \qquad\text{for } m\ge n_\beta+1.
    \end{equation}
    In this paper, we
    refer to this finite change of sign situation as the \emph{extended meromorphic}
    case.
    We also note that if $\beta\in\Nb_0$, the series has a finite number of terms, and $\phi_\beta$ is rational. 
    
    In this extended setting, the property of preserving the upper and lower complex half-planes is lost: we no longer fall in the classical Nevanlinna--Pick class which forced the poles and zeros to be real and interlacing.
    An appropriate replacement is the finite-index theory of generalised Nevanlinna functions, for which we follow
    \cite{KreinLanger1977,DahoLanger1985,RovnyakSakhnovich2004}: let $Q$ be meromorphic on $\Cb\setminus\Rb$ and satisfy
    $Q(\bar z)=\overline{Q(z)}$. Define the kernel
    \[
            N_Q(z,w)
            :=
            \frac{Q(z)-\overline{Q(w)}}{z-\bar w}.
    \]
    We say that $Q$ belongs to the \textit{Nevanlinna class of index $\kappa$}, denoted by $\mathcal N_\kappa$, if the kernel $N_Q$ has $\kappa$ negative squares. This means that, for every finite choice $z_1,\ldots,z_n$ in the domain of holomorphy of $Q$, the Hermitian quadratic form
    \[
            \sum_{i,j=1}^n N_Q(z_i,z_j)c_i\overline{c_j}
    \]
    has at most $\kappa$ negative squares, or equivalently its representing matrix has at most $\kappa$ negative eigenvalues, and that equality is attained for some choice of points. The terminology comes from the fact that, after a change of coordinates, such a quadratic form can be written as
    \[
            |u_1|^2+\cdots+|u_p|^2-|v_1|^2-\cdots-|v_r|^2,
            \qquad\text{with }r\le\kappa,
    \]
    possibly together with zero terms. Thus $\kappa$ is the maximal number of independent negative directions of the kernel. We write
    \[
            \mathcal N_{<\infty}:=\bigcup_{\kappa\ge0}\mathcal N_\kappa.
    \]
    The class $\mathcal N_0$ is the usual Nevanlinna class.
    
    The Krein--Langer factorisation, in the form of
    \cite[Corollary]{DijksmaLangerLugerShondin2000}, shows that, after removing finitely many exceptional poles and zeros, a function in $\mathcal N_{<\infty}$ reduces to a classical Nevanlinna function. In particular, a function in $\mathcal N_{<\infty}$ which is meromorphic on $\Cb$ has only finitely many nonreal poles, and all but finitely many of its real poles are simple; see also
    \cite[Theorem~2.1]{RovnyakSakhnovich2004}. Since
    \[
            N_{-1/Q}(z,w)
            =
            \frac{N_Q(z,w)}
            {Q(z)\overline{Q(w)}},
    \]
    the transformation $Q\mapsto-1/Q$ preserves the number of negative squares. The corresponding statements therefore hold also for the zeros.
    
    The finite change of sign expansion \eqref{eq:signed-phi-expansion-prelim} places the
    positive-beta case in this finite-index class. Indeed, for noninteger
    $\beta>0$, define
    \begin{equation}\label{eq:q-generalized-nevanlinna-prelim}
    \begin{split}
            \mathfrak q_\beta(z)
            :=
            -\varepsilon_\beta\frac{\phi_\beta(z)}{z}
            =-\sum_{m=0}^{n_\beta}
            \frac{\varepsilon_\beta c_{\beta,m}}
            {a_{\beta,m}(a_{\beta,m}+z)} -
            \sum_{m\ge n_\beta+1}
            \frac{\varepsilon_\beta c_{\beta,m}}
            {a_{\beta,m}(a_{\beta,m}+z)}.
    \end{split}
    \end{equation}
    By \eqref{eq:eventual-sign-prelim}, the second sum is a classical Nevanlinna function. The
    first sum is rational and its kernel has finite rank. Indeed, for $a,d\in\mathbb R$, the kernel corresponding to
    $d/(a+z)$ is
    \[
            -\frac{d}{(a+z)(a+\overline w)},
    \]
    and hence has rank one. Thus, by the subadditivity of rank,
    the kernel of the complete finite sum has rank at most
    $n_\beta+1$; see \cite[Theorem~3.8(b), equation~(3.41)]{Tian2010}. Adding a Hermitian kernel of rank at most $n_\beta+1$ to a
    positive kernel can create at most $n_\beta+1$ negative squares;
    see \cite[Theorem~3.8(b), equation~(3.39)]{Tian2010}. Applying this to every Gram matrix of
    the kernel, we conclude that it has at most $n_\beta+1$ negative
    squares. Consequently,
    \[
            \mathfrak q_\beta\in\mathcal N_{<\infty}.
    \]
    Moreover,
    \[
            N_{-1/\mathfrak q_\beta}(z,w)
            =
            \frac{N_{\mathfrak q_\beta}(z,w)}
            {\mathfrak q_\beta(z)
            \overline{\mathfrak q_\beta(w)}},
            \qquad\text{so}\quad        \varepsilon_\beta\frac{z}{\phi_\beta(z)}
            =
            -\frac1{\mathcal q_\beta(z)}
            \in\mathcal N_{<\infty}.\]
    For integer $\beta>0$, the same conclusion follows directly because
    $\phi_\beta$ is rational and the associated kernels have finite rank.
    
    It follows that $\phi_\beta$ has only finitely many nonreal zeros, counted
    with multiplicity, and that they occur in conjugate pairs. The specific form of \(\phi_\beta\) allows for more detailed information about these zeros. 
    When $n_\beta$ is even then there are precisely \(n_\beta\) complex zeros whereas if \(n_\beta\) is odd, then the complex zeroes, counted with multiplicity, are at most \(n_\beta+1\), see Proposition \ref{prop:complexZero}.
    We conjecture that in the latter case all zeros are complex but despite significant efforts we have not been able to establish this fact.

    \section{Probabilistic arguments}
    \label{sec:probabilistic-arguments}

    Before delving into the more analytic arguments which provide the series expansions for $U_\beta$ and $\Eb D_n$, we first prove the results which require mainly probabilistic arguments, namely, Theorem~\ref{thm:potential-identities-results}, Corollary~\ref{cor:Dn-renewal-results}, and Theorem~\ref{thm:max-height-results}.
    
    \subsection{The tagged fragment and the potential identities}
    
    As noted, Theorem~\ref{thm:potential-identities-results} is essentially the general-beta counterpart of \cite[Proposition~4.1]{Aldous-Janson-2025-Mellin} and the proof is in essence the same. However, it gives good intuition for the model and illustrates why the potential measure is the common object behind different model statistics, so we include a condensed proof.
    
    \begin{proof}[Proof of Theorem~\ref{thm:potential-identities-results}]
    Let
    \[
            Z_n(t):=\#\bigl(\Pi_1(t)\cap[n]\bigr)
    \]
    be the size of the clade containing the tagged label at time $t$. Conditionally on the ranked fragment frequencies $\mathbf P(t)=(P_i(t))_{i\ge1}$, the tagged label falls in the $i$th fragment with probability $P_i(t)$. Given this choice, each of the remaining $n-1$ labels falls in the same fragment independently with probability $P_i(t)$. Therefore
    \[
            \P\bigl(Z_n(t)=k\mid\mathbf P(t)\bigr)
            =
            \sum_{i\ge1}P_i(t)\binom{n-1}{k-1}P_i(t)^{k-1}(1-P_i(t))^{n-k}.
    \]
    Averaging and using the size-biased identity \eqref{eq:size-biased-identity-prelim}, we obtain
    \[
            \P\bigl(Z_n(t)=k\bigr)
            =
            \Eb\left[\binom{n-1}{k-1}|\Pi_1(t)|^{k-1}(1-|\Pi_1(t)|)^{n-k}\right]
            =
            \Eb\left[\binom{n-1}{k-1}e^{-(k-1)\xi_t}(1-e^{-\xi_t})^{n-k}\right].
    \]
    Equivalently,
    \begin{equation}\label{eq:tagged-binomial-proof}
            Z_n(t)\stackrel{d}{=}1+\operatorname{Bin}\bigl(n-1,e^{-\xi_t}\bigr),
            \qquad\text{conditionally on $\xi_t$.}
    \end{equation}
    Therefore, recalling that
    \[
            F_n(x)=1-(1-e^{-x})^{n-1},
    \]
    we have
    \[
            \P(D_n>t)=\P(Z_n(t)\ge2)=\Eb F_n(\xi_t).
    \]
    Integrating this identity over $t$ and using Tonelli's theorem gives
    \[
            \Eb D_n=\Eb\left[\int_0^\infty F_n(\xi_t)\D t\right]=\mathcal U_\beta F_n.
    \]
    Note the similarity of $F_n$ with the tail distribution of the maximum of independent exponential random variables. Indeed, if $M_{n-1}$ is the maximum of $n-1$ independent exponential random variables of rate $1$, independent of $\xi$, then
    \[
            F_n(x)=\P(M_{n-1}>x).
    \]
    Consequently,
    \[
            \P(T_{M_{n-1}}>t\mid\xi)=\P(M_{n-1}>\xi_t\mid\xi)=1-(1-e^{-\xi_t})^{n-1}.
    \]
    Averaging the last identity and comparing it with the tail of $D_n$ gives
    \[
            \P(T_{M_{n-1}}>t)=\P(D_n>t),
            \qquad\text{so}\qquad
            D_n\stackrel{d}=T_{M_{n-1}}.
    \]
    The passage-time representation is convenient because $T_x$ is the inverse of the subordinator and is a basic object in renewal theory.
    
    We are left with the expected discrete height and the expected total continuous length. For the discrete height, the strategy is to relate the probability of visiting a given clade size to the amount of continuous time spent at that size. For $2\le j\le n$, let
    \[
            O_{n,j}:=\int_0^\infty\ind{Z_n(t)=j}\D t
    \]
    be the occupation time of the tagged clade at size $j$. By \eqref{eq:tagged-binomial-proof} and Tonelli's theorem,
    \[
            \Eb O_{n,j}
            =
            \binom{n-1}{j-1}\Eb\left[\int_0^\infty e^{-(j-1)\xi_t}(1-e^{-\xi_t})^{n-j}\D t\right]
            =
            \binom{n-1}{j-1}\mathcal U_\beta G_{n,j}.
    \]
    The tagged clade can visit size $j$ at most once, since its size decreases strictly at every split. If size $j$ is visited, the clade remains there for an exponential time of mean $1/\phi_\beta(j-1)$. Therefore
    \[
            \Eb O_{n,j}=\frac{a_\beta(n,j)}{\phi_\beta(j-1)},
    \qquad\text{and hence}
\qquad
            a_\beta(n,j)=\phi_\beta(j-1)\binom{n-1}{j-1}\mathcal U_\beta G_{n,j}.
    \]
    The discrete height of the tagged leaf is exactly the number of splits along its ancestral line, or equivalently the number of non-singleton clade sizes visited before absorption. Thus
    \[
            L_n=\sum_{j=2}^n\ind{\text{the tagged clade visits size }j},
            \qquad\text{and therefore}\qquad
            \Eb L_n=\sum_{j=2}^n a_\beta(n,j).
    \]
    By exchangeability, label $1$ has the same distribution as a uniformly chosen label. Hence $\Eb L_n$ is also the expected average discrete height of the leaves of the tree.
    
    Finally, consider the total continuous-time length. At time $t$, every fragment which contains at least two of the sampled labels corresponds to an edge which is still growing and therefore contributes one unit to the instantaneous increase of the total length. Singleton fragments no longer contribute. If $N_i^{(n)}(t)$ denotes the number of sampled labels in the $i$th fragment at time $t$, then
    \[
            \Lambda_n=\int_0^\infty\sum_{i\ge1}\ind{N_i^{(n)}(t)\ge2}\D t.
    \]
    Conditionally on the fragment frequencies $\mathbf P(t)=(P_i(t))_{i\ge1}$, the marginal distribution of $N_i^{(n)}(t)$ is $\operatorname{Bin}(n,P_i(t))$. Although these variables are not independent for different $i$, only their marginal probabilities are needed, and therefore
    \[
            \Eb\left[\sum_{i\ge1}\ind{N_i^{(n)}(t)\ge2}\middle|\mathbf P(t)\right]
            =
            \sum_{i\ge1}\left[1-(1-P_i(t))^n-nP_i(t)(1-P_i(t))^{n-1}\right].
    \]
    Put
    \[
            g(y):=1-(1-y)^n-ny(1-y)^{n-1}.
    \]
    The tagged fragment is a size-biased fragment, so \eqref{eq:size-biased-identity-prelim} gives
    \[
            \Eb\sum_{i\ge1}g(P_i(t))=\Eb\left[e^{\xi_t}g(e^{-\xi_t})\right].
    \]
    By the definition of $H_n$,
    \[
            e^xg(e^{-x})
            =
            e^x\left[1-(1-e^{-x})^n\right]-n(1-e^{-x})^{n-1}
            =
            H_n(x).
    \]
    Consequently,
    \[
            \Eb\sum_{i\ge1}\ind{N_i^{(n)}(t)\ge2}=\Eb H_n(\xi_t),
    \]
    and again by integrating over $t$ and using Tonelli's theorem gives
    \[
            \Eb\Lambda_n
            =
            \Eb\left[\int_0^\infty H_n(\xi_t)\D t\right]
            =
            \mathcal U_\beta H_n.
    \]
    \end{proof}
    
    \subsection{Renewal consequences}
    \label{sec:typical-asymptotics}
    \begin{proof}[Proof of Corollary~\ref{cor:Dn-renewal-results}]
    Recall the passage time
    \[
            T_x:=\inf\{t\ge0:\xi_t\ge x\},
            \qquad
            \text{and let}\qquad
            U_\beta(x):=U_\beta([0,x)).
    \]
    Since $\{T_x>t\}=\{\xi_t<x\}$, Tonelli's theorem gives
    \[
            \Eb T_x
            =
            \int_0^\infty\P(T_x>t)\D t
            =
            \int_0^\infty\P(\xi_t<x)\D t
            =
            U_\beta(x).
    \]
    
    The Laplace transform of the renewal measure is
    \[
            \int_{[0,\infty)}e^{-\lambda x}U_\beta(\D x)
            =
            \frac1{\phi_\beta(\lambda)}
            \sim
            \frac1{\mu_\beta \lambda},
            \qquad\text{as }\lambda\downarrow0.
    \]
    Therefore Karamata's Tauberian theorem, see
    \cite[Proposition~1.5]{Bertoin-1999-Subordinators}, gives
    \begin{equation}\label{eq:inverse-mean-proof}
            U_\beta(x)\sim\frac{x}{\mu_\beta},
            \qquad\text{as }x\to\infty.
    \end{equation}
    The same Tauberian argument applies in the compound Poisson case. In particular,  there exists $C_\beta>0$ such that
    \begin{equation}\label{eq:renewal-linear-bound-proof}
            U_\beta(x)\le C_\beta(1+x),
            \qquad\text{for }x\ge0.
    \end{equation}
    We next recall the corresponding central limit theorem for the passage time. The usual central limit theorem for the L\'evy process gives
    \[
            \frac{\xi_t-\mu_\beta t}{\tau_\beta\sqrt t}
            \xrightarrow{d}\mathcal N(0,1),
            \qquad\text{as }t\to\infty,
    \]
    and a standard inversion argument, as in
    \cite[Section~10.2, Theorem~(2)]{GrimmettStirzaker2001}, gives the central limit theorem for $T_x$. Indeed, put
    \[
            b_x:=\sqrt{\tau_\beta^2\mu_\beta^{-3}x},
            \qquad\text{and}\qquad
            t_x(y):=\mu_\beta^{-1}x+yb_x,
    \]
    and note that, for every fixed $y\in\Rb$,
    \[
            \P\left(\frac{T_x-\mu_\beta^{-1}x}{b_x}\le y\right)
            =
            \P\bigl(\xi_{t_x(y)}\ge x\bigr),
            \qquad\text{and}\qquad
            \frac{x-\mu_\beta t_x(y)}{\tau_\beta\sqrt{t_x(y)}}\longrightarrow-y.
    \]
    Consequently,
    \begin{equation}\label{eq:inverse-clt-proof}
            \frac{T_x-\mu_\beta^{-1}x}
            {\sqrt{\tau_\beta^2\mu_\beta^{-3}x}}
            \xrightarrow{d}\mathcal N(0,1),
            \qquad\text{as }x\to\infty.
    \end{equation}
    
    It remains to evaluate the passage time at the random level $M_{n-1}$. Its exact distribution is
    \[
            \P(M_{n-1}\le x)=(1-e^{-x})^{n-1},
            \qquad\text{for }x\ge0,
    \]
    and standard exponential order-statistics identities,
    e.g. \cite[Section 4.14, Exercise 24]{GrimmettStirzaker2001}, give
    \[
            \Eb M_{n-1}=h_{n-1},
            \qquad\text{and}\qquad
            \Var(M_{n-1})=\sum_{k=1}^{n-1}\frac1{k^2}.
    \]
    Moreover,
    \[
            \P(M_{n-1}-\log n\le x)
            =
            \left(1-\frac{e^{-x}}n\right)^{n-1}
            \longrightarrow
            \exp\left(-e^{-x}\right),
        \qquad \text{as $n\longrightarrow\infty$}.
    \]
    It follows that
    \begin{equation}\label{eq:maximum-exponential-proof}
            M_{n-1}-\log n\xrightarrow{d}\Gumbel,
            \qquad\text{and}\qquad
            \frac{M_{n-1}}{\log n}\longrightarrow1,
            \quad\text{in }L^2.
    \end{equation}
    Using $D_n\stackrel{d}=T_{M_{n-1}}$ and the independence of $M_{n-1}$ and $\xi$, we obtain
    \[
            \Eb D_n
            =
            \Eb\left[\Eb\left(T_{M_{n-1}}\mid M_{n-1}\right)\right]
            =
            \Eb U_\beta(M_{n-1}).
    \]
    Relations \eqref{eq:inverse-mean-proof} and \eqref{eq:maximum-exponential-proof} imply
    \[
            \frac{U_\beta(M_{n-1})}{\log n}\longrightarrow\frac1{\mu_\beta},
            \qquad\text{in probability}.
    \]
    Furthermore, by \eqref{eq:renewal-linear-bound-proof},
    \[
            0\le\frac{U_\beta(M_{n-1})}{\log n}
            \le
            C_\beta\frac{1+M_{n-1}}{\log n},
    \]
    and the right-hand side is bounded in $L^2$. Therefore 
        $U_\beta(M_{n-1})/\log n$ is bounded in $L^2$ and thus uniformly integrable by the standard $L^p$-criterion, so
    \[
            \Eb D_n=\Eb U_\beta(M_{n-1})\sim\frac{\log n}{\mu_\beta}.
    \]
    Finally, since $M_{n-1}-\log n$ is tight, choose any sequence $K_n$ such that
    \[
            K_n\longrightarrow\infty,
            \qquad\text{and}\qquad
            K_n=\so(\sqrt{\log n}),
    \]
    as for example, $K_n=\log\log n$. Then
    \[
            \P\bigl(|M_{n-1}-\log n|\le K_n\bigr)\longrightarrow1,
    \]
    and, on this event, the monotonicity of $x\mapsto T_x$ gives
    \[
            T_{\log n-K_n}\le T_{M_{n-1}}\le T_{\log n+K_n}.
    \]
    Applying \eqref{eq:inverse-clt-proof} at the two deterministic levels
    $\log n-K_n$ and $\log n+K_n$, and using
    $K_n=\so(\sqrt{\log n})$, shows that both endpoint terms, centred by
    $\mu_\beta^{-1}\log n$ and divided by
    $\sqrt{\tau_\beta^2\mu_\beta^{-3}\log n}$, converge to the same standard
    normal distribution. The middle term is therefore squeezed between two
    terms with the same limit, outside an event whose probability tends to zero.
    Using \eqref{eq:Dn-passage-results}, we conclude that
    \[
            \frac{D_n-\mu_\beta^{-1}\log n}
            {\sqrt{\tau_\beta^2\mu_\beta^{-3}\log n}}
            \xrightarrow{d}\mathcal N(0,1).
    \]
    \end{proof}

    \subsection{The maximum continuous-time height}
    \label{sec:max-height-proof}
    
    We now turn to the global height of the continuous tree, whose behaviour was presented in Theorem~\ref{thm:max-height-results}. Translating the problem to the paintbox construction, we are interested in the first time when no two sampled labels fall in the same fragment. We first identify its first-order behaviour by the classical first and second moment method.
    
    For $q>1$, write
    \[
            S_q(t):=\sum_{i\ge1}P_i(t)^q.
    \]
    We keep the notation for general $q$, since $S_{p+1}$ will also appear below. Conditionally on $\mathbf P(t)$, let
    \[
            N_i(t):=\#\{1\le a\le n:a\text{ falls in fragment }i\}
    \]
    and define the number of pairwise collisions by
    \begin{equation}\label{eq:collision-count-proof}
            C_n(t):=\sum_{i\ge1}\binom{N_i(t)}2=\sum_{1\le a<b\le n}I_{ab}(t),
    \end{equation}
    where $I_{ab}(t)$ indicates that labels $a$ and $b$ lie in the same fragment. Then
    \begin{equation}\label{eq:Dstar-collision-proof}
            D_n^*\le t
            \qquad\text{if and only if}\qquad
            C_n(t)=0.
    \end{equation}
    Moreover, by linearity of expectation,
    \begin{equation}\label{eq:conditional-mu-proof}
            \mu_n(t):=\Eb\bigl(C_n(t)\mid\mathbf P(t)\bigr)=\binom n2S_2(t),
    \end{equation}
    so the martingale $W_\beta$ defined in \eqref{eq:W-beta-results} drives the behaviour of the collision count.
    
    \subsubsection{The quadratic additive martingale}
    
    \begin{lemma}[The quadratic martingale]\label{lem:quadratic-martingale-proof}
    For every $\beta>-2$,
    \begin{equation}\label{eq:W-beta-convergence-proof}
            W_\beta(t)=e^{r_\beta t}S_2(t)\longrightarrow W_\beta(\infty),
            \qquad\text{almost surely and in }L^1,
    \end{equation}
    where $W_\beta(\infty)>0$ almost surely and $\Eb W_\beta(\infty)=1$.
    \end{lemma}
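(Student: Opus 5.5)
The plan is to work with the filtration $\mathcal F_t$ generated by $(\Pi(s),s\le t)$ and establish three things in turn: the martingale property, uniform integrability (which gives both modes of convergence and $\Eb W_\beta(\infty)=1$), and almost sure positivity of the limit.

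\emph{Martingale property.} I will use the fragmentation property together with homogeneity. Conditionally on $\mathcal F_s$, the fragment with frequency $P_i(s)$ evolves, after time $s$, as an independent copy of the whole process scaled by $P_i(s)$. Applying \eqref{eq:power-sum-mean-prelim} with $q=2$ to each copy gives
\[
\Eb\bigl(S_2(s+t)\mid\mathcal F_s\bigr)=\sum_{i}P_i(s)^2e^{-t\phi_\beta(1)}.
\]
Since $r_\beta=\phi_\beta(1)$, this shows that $W_\beta$ is a nonnegative martingale with $\Eb W_\beta(t)=1$. Hence $W_\beta(t)$ converges almost surely to some $W_\beta(\infty)\ge0$.

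\emph{Uniform integrability.} This is the main step. I will invoke the criterion of Bertoin and Rouault \cite{BertoinRouaultDiscretization} for additive martingales $e^{t\phi(p)}\sum_iP_i(t)^{1+p}$ of homogeneous fragmentations. The martingale is uniformly integrable, indeed bounded in $L^{1+\varepsilon}$, as soon as $p\phi'(p)<\phi(p)$. Here $p=1$, and the required inequality is $\phi_\beta'(1)<\phi_\beta(1)$. This follows from strict concavity of the Bernstein function $\phi_\beta$, which is nonlinear because it has no drift and nonzero L\'evy measure, together with $\phi_\beta(0)=0$:
\[
\phi_\beta(1)=\int_0^1\phi_\beta'(u)\D u>\phi_\beta'(1).
\]
If I want a self-contained argument instead, I would bound $\Eb W_\beta(t)^{1+\varepsilon}$ in the Biggins style. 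The first step is to condition on the first time unit and use the fragmentation property to write $W_\beta(t+1)=\sum_iP_i(1)^2e^{r_\beta}W^{(i)}_\beta(t)$. I would then apply $(\sum x_i)^{1+\varepsilon}\le\sum x_i^{1+\varepsilon}+\text{cross terms}$, which reduces the question to the finiteness and size of $\Eb\sum_iP_i(1)^{2(1+\varepsilon)}e^{(1+\varepsilon)r_\beta}=e^{(1+\varepsilon)\phi_\beta(1)-\phi_\beta(1+2\varepsilon)}$. That exponent is negative for small $\varepsilon$ precisely when $\phi_\beta'(1)<\phi_\beta(1)$. Controlling the cross terms is the technical obstacle in this route, so I would cite \cite{BertoinRouaultDiscretization} instead. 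Uniform integrability gives convergence in $L^1$ and $\Eb W_\beta(\infty)=1$.

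\emph{Positivity.} Let $q:=\P(W_\beta(\infty)=0)$. By the branching decomposition at time $s$,
\[
W_\beta(\infty)=\sum_iP_i(s)^2e^{r_\beta s}W^{(i)}_\beta(\infty),
\]
where the $W^{(i)}_\beta(\infty)$ are i.i.d. copies independent of $\mathcal F_s$. Hence $W_\beta(\infty)=0$ if and only if every $W^{(i)}_\beta(\infty)$ with $P_i(s)>0$ vanishes. This gives
\[
q=\Eb\bigl[q^{N(s)}\bigr],
\]
with $N(s)$ the number of nonzero fragments. Since the fragmentation is conservative, binary and nontrivial, $N(s)\to\infty$ almost surely as $s\to\infty$, so letting $s\to\infty$ forces $q\in\{0,1\}$. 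The case $q=1$ is excluded by $\Eb W_\beta(\infty)=1$, so $W_\beta(\infty)>0$ almost surely.
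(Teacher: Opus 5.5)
Your martingale step and your positivity step are fine. Letting $s\to\infty$ with $N(s)\to\infty$ is a harmless variant of the paper's one-step inequality $\Eb q^{N(t)}<q$.

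\textbf{The gap is in the uniform-integrability step, where you have misstated the Bertoin--Rouault criterion.} For $e^{t\phi(p)}\sum_iP_i(t)^{1+p}$, the criterion requires $p<\bar p$, where $\bar p$ is the root of $(1+p)\phi'(p)=\phi(p)$. The function $(1+p)\phi'(p)-\phi(p)$ is decreasing and equals $\mu>0$ at $p=0$. So at $p=1$ the condition is
\[
2\phi_\beta'(1)>\phi_\beta(1),
\]
not $\phi_\beta'(1)<\phi_\beta(1)$.

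The inequality you verify holds for every driftless nonlinear Bernstein function with $\phi(0)=0$, so it cannot be the criterion. If it were, every additive martingale with $p>0$ would be uniformly integrable, which is false. For example, take a fragmentation splitting into $k$ equal pieces at unit rate. Then $\kappa(p)=1-k^{-p}$, and the $p=1$ condition reads $2\log k>k-1$. This fails for $k=4$, where the martingale degenerates.

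Your own Biggins-style computation already shows the correct condition. The exponent $(1+\varepsilon)\phi_\beta(1)-\phi_\beta(1+2\varepsilon)$ vanishes at $\varepsilon=0$ and has derivative $\phi_\beta(1)-2\phi_\beta'(1)$ there. It is therefore negative for small $\varepsilon>0$ exactly when $2\phi_\beta'(1)>\phi_\beta(1)$, contrary to your claim.

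\textbf{Concavity cannot supply this.} It bounds $\phi_\beta'(1)$ from above, whereas you need a lower bound. The missing step is a genuine computation specific to the beta dislocation measure. The paper does it by differentiating \eqref{eq:phi-beta-intro} and symmetrising with $s\leftrightarrow1-s$, using the symmetric weight $s^\beta(1-s)^\beta$:
\[
2\phi_\beta'(1)-\phi_\beta(1)=\int_0^1\bigl[-s^2\log s-(1-s)^2\log(1-s)-s(1-s)\bigr]s^\beta(1-s)^\beta\,\D s.
\]
It then checks that the bracket is strictly positive on $(0,1)$. Note that integrability near $s=0$ uses $\beta>-2$.

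With this inequality in place, Bertoin--Rouault gives almost sure and $L^1$ convergence with $\Eb W_\beta(\infty)=1$. The rest of your argument then goes through unchanged.
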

    
    \begin{proof}[Proof of Lemma~\ref{lem:quadratic-martingale-proof}]
    The convergence of the martingale, together with
    $\Eb W_\beta(\infty)=1$, follows from
    \cite[Proposition~1 and Theorem~4]{BertoinRouaultDiscretization}
    once we check condition~(4) therein, namely
    $
            2\phi_\beta'(1)-\phi_\beta(1)>0.
    $
    This follows because from representation \eqref{eq:phi-beta-intro},
    \[
            2\phi_\beta'(1)-\phi_\beta(1)
            =
            \int_0^1
            \left[
            -s^2\log s-(1-s)^2\log(1-s)-s(1-s)
            \right]
            s^\beta(1-s)^\beta\D s,
    \]
    and by standard calculus we can verify that the integrand is strictly positive.
    
    It remains to note that the limit is strictly positive. By the fragmentation
    property, for every $t>0$,
    \[
            W_\beta(\infty)
            \stackrel{d}{=}
            e^{r_\beta t}
            \sum_{i\ge1}P_i(t)^2W_{\beta,i}(\infty),
    \]
    where, conditionally on $\mathbf P(t)$, the variables
    $W_{\beta,i}(\infty)$ are independent copies of
    $W_\beta(\infty)$. If
    \[
            q:=\P(W_\beta(\infty)=0)
    \]
    and $N(t)$ is the number of positive-mass fragments at time $t$, then
    \[
            q=\Eb\bigl(q^{N(t)}\bigr).
    \]
    Since the fragmentation is conservative, $N(t)\ge1$ almost surely, and since
    it is nontrivial, $\P(N(t)\ge2)>0$ for some $t>0$. Hence, for $0<q<1$,
    \[
            \Eb\bigl(q^{N(t)}\bigr)
            \le
            q\P(N(t)=1)+q^2\P(N(t)\ge2)
            <q,
    \]
    which is impossible. Therefore $q\in\{0,1\}$, and
    $\Eb W_\beta(\infty)=1$ excludes $q=1$, so $q=0$,
    that is, $W_\beta(\infty)$ is strictly positive almost surely as needed.
    \end{proof}
    
    \subsubsection{First and second moments}
    
    \begin{lemma}[Conditional first and second moments]
    \label{lem:collision-moments-proof}
    Conditionally on $\mathbf P(t)$,
    \begin{equation}\label{eq:collision-variance-proof}
            \Var\bigl(C_n(t)\mid\mathbf P(t)\bigr)=\binom n2S_2(t)\bigl(1-S_2(t)\bigr)+6\binom n3\bigl(S_3(t)-S_2(t)^2\bigr).
    \end{equation}
    In particular,
    \begin{equation}\label{eq:collision-variance-bound-proof}
            \Var\bigl(C_n(t)\mid\mathbf P(t)\bigr)\le\mu_n(t)+n^3S_3(t).
    \end{equation}
    \end{lemma}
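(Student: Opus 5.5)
We compute the conditional variance by expanding $C_n(t)=\sum_{a<b}I_{ab}(t)$ and classifying pairs of pairs according to how many labels they share. Throughout, we work conditionally on $\mathbf P(t)$. By the paintbox construction, the labels $1,\dots,n$ are then placed independently into fragment $i$ with probability $P_i(t)$. Consequently, for distinct labels,
\[
\Eb(I_{ab}\mid\mathbf P(t))=S_2(t),\qquad \Eb(I_{ab}I_{ac}\mid\mathbf P(t))=S_3(t),\qquad \Eb(I_{ab}I_{cd}\mid\mathbf P(t))=S_2(t)^2 .
\]
The first identity holds because $a,b$ share a fragment with probability $\sum_iP_i^2$. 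The second holds because $I_{ab}I_{ac}=1$ exactly when $a,b,c$ all fall in the same fragment. The third holds because, for disjoint pairs, the two indicators depend on disjoint sets of independent labels and are therefore conditionally independent.

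Next we write $\Var(C_n\mid\mathbf P)=\sum_{\{a,b\},\{c,d\}}\Cov(I_{ab},I_{cd}\mid\mathbf P)$ and split the sum into three classes. When the two pairs coincide, we get $\binom n2$ terms, each equal to $S_2-S_2^2=S_2(1-S_2)$. When the pairs share exactly one label, we count ordered pairs of pairs: choosing a $3$-set gives $\binom n3$ ways, and within it there are $3$ choices of the common label and $2$ orderings, for $6\binom n3$ terms in total. Each such term equals $S_3-S_2^2$. Disjoint pairs contribute zero covariance. Summing the three classes gives exactly \eqref{eq:collision-variance-proof}.

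For the bound \eqref{eq:collision-variance-bound-proof}, we estimate the two terms separately. The first term is at most $\binom n2S_2=\mu_n(t)$, since $0\le 1-S_2\le1$. For the second term, we drop $-S_2^2\le0$ and use $6\binom n3=n(n-1)(n-2)\le n^3$, which gives $6\binom n3(S_3-S_2^2)\le n^3S_3$. This bound holds whatever the sign of $S_3-S_2^2$. There is no real obstacle here. The only point requiring care is the combinatorial count $6\binom n3$ of ordered overlapping pairs, together with the observation that disjoint pairs have vanishing conditional covariance.
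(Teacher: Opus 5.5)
Your proposal is correct and follows essentially the same route as the paper: you expand the conditional variance over pairs of pairs, use conditional independence for disjoint pairs and covariance $S_3(t)-S_2(t)^2$ for the $6\binom n3$ ordered overlapping configurations, and then bound the two terms using $1-S_2(t)\le 1$ and $6\binom n3\le n^3$. The only cosmetic difference is that you count ordered overlapping pairs directly, whereas the paper counts $3\binom n3$ unordered ones and doubles.
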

    
    \begin{proof}[Proof of Lemma~\ref{lem:collision-moments-proof}]
    Condition on $\mathbf P(t)$ and write $I_{ab}=I_{ab}(t)$. Since
    \[
            C_n(t)=\sum_{1\le a<b\le n}I_{ab},
            \qquad\text{and}\qquad
            \P(I_{ab}=1\mid\mathbf P(t))=S_2(t),
    \]
    the sum of the individual variances is
    \[
            \sum_{a<b}\Var(I_{ab}\mid\mathbf P(t))
            =
            \binom n2S_2(t)\bigl(1-S_2(t)\bigr).
    \]
    For two distinct pairs, the corresponding indicators are conditionally
    independent if the pairs are disjoint. If they overlap, for example
    $\{a,b\}$ and $\{a,c\}$, their product is one exactly when the three labels
    $a,b,c$ fall in the same fragment. Hence
    \[
            \Cov(I_{ab},I_{ac}\mid\mathbf P(t))
            =
            S_3(t)-S_2(t)^2.
    \]
    Every set of three labels determines three unordered pairs of overlapping
    pairs, so there are $3\binom n3$ such configurations. Since the variance
    expansion contains twice the sum over unordered pairs, we obtain
    \eqref{eq:collision-variance-proof}.
    
    Finally, using \eqref{eq:conditional-mu-proof}, we have
    \[
            \binom n2S_2(t)\bigl(1-S_2(t)\bigr)\le\mu_n(t),
            \qquad\text{and}\qquad
            6\binom n3\bigl(S_3(t)-S_2(t)^2\bigr)\le n^3S_3(t),
    \]
    so \eqref{eq:collision-variance-bound-proof} holds.
    \end{proof}
    
    \begin{proof}[Proof of the first part of Theorem~\ref{thm:max-height-results}, \eqref{eq:max-height-lln-results}]
    We prove the convergence of $D_n^*/\log n$ in probability by the first and second moment method. The upper bound is the standard first moment argument, used for the critical case in \cite[Theorem~1.4]{Aldous-Pittel-2025}: fix $\varepsilon\in(0,2)$ and define
    \[
            t_n^+:=\frac{(2+\varepsilon)\log n}{r_\beta}.
    \]
    By \eqref{eq:Dstar-collision-proof}, Markov's inequality and \eqref{eq:power-sum-mean-prelim},
    \[
            \P(D_n^*>t_n^+)
            =
            \P(C_n(t_n^+)>0)
            \le
            \Eb C_n(t_n^+)
            =
            \binom n2e^{-r_\beta t_n^+}
            \longrightarrow0.
    \]
    
    For the lower bound, put
    \[
            t_n^-:=\frac{(2-\varepsilon)\log n}{r_\beta}.
    \]
    By Lemma~\ref{lem:quadratic-martingale-proof} and \eqref{eq:conditional-mu-proof},
    \[
            \mu_n(t_n^-)
            =
            \frac12\left(1-\frac1n\right)n^\varepsilon W_\beta(t_n^-)
            \longrightarrow\infty,
            \qquad\text{almost surely}.
    \]
    Moreover, by monotonicity of the $\ell^p$-norms,
    \[
            S_3(t)^{1/3}\le S_2(t)^{1/2},
            \qquad\text{and hence}\qquad
            S_3(t)\le S_2(t)^{3/2}.
    \]
    Therefore Lemma~\ref{lem:collision-moments-proof} gives
    \[
            \frac{\Var(C_n(t_n^-)\mid\mathbf P(t_n^-))}{\mu_n(t_n^-)^2}
            \le
            \frac1{\mu_n(t_n^-)}
            +
            \frac{C}{n\sqrt{S_2(t_n^-)}}.
    \]
    Since
    \[
            n\sqrt{S_2(t_n^-)}
            =
            n^{\varepsilon/2}\sqrt{W_\beta(t_n^-)}
            \longrightarrow\infty,
            \qquad\text{almost surely},
    \]
    the right-hand side tends to zero, and Chebyshev's inequality now yields
    \[
            \P\bigl(C_n(t_n^-)=0\mid\mathbf P(t_n^-)\bigr)
            \le
            \frac{\Var(C_n(t_n^-)\mid\mathbf P(t_n^-))}{\mu_n(t_n^-)^2}
            \longrightarrow0,
            \qquad\text{almost surely}.
    \]
    Taking expectations and using bounded convergence together with
    \eqref{eq:Dstar-collision-proof}, we obtain
    \[
            \P(D_n^*\le t_n^-)
            =
            \Eb\left[\P\bigl(C_n(t_n^-)=0\mid\mathbf P(t_n^-)\bigr)\right]
            \longrightarrow0.
    \]
    Equivalently,
    \[
            \P(D_n^*>t_n^-)\longrightarrow1,
    \]
    so the obtained upper and lower bounds prove \eqref{eq:max-height-lln-results}.
    \end{proof}
    
    \subsubsection{Poisson approximation and fluctuations}
    
    We now determine the behaviour of $D_n^*$ in the critical window around
    $2\log n/r_\beta$. Recall that
    \[
            D_n^*\le t
            \qquad\text{if and only if}\qquad
            C_n(t)=0,
    \]
    so we need to understand the probability that the collision count is zero.
    
    In principle, this could be done by the method of moments. Conditionally on
    $\mathbf P(t)$, the variable $C_n(t)$ is a sum of Bernoulli random variables.
    In the factorial moments, configurations consisting of disjoint pairs give
    the moments of a Poisson random variable, whereas configurations containing
    overlapping pairs give error terms involving $S_3(t)$. This suggests that
    $C_n(t)$ should be approximately Poisson with parameter
    \[
            \mu_n(t)
            =
            \Eb\bigl(C_n(t)\mid\mathbf P(t)\bigr)
            =
            \binom n2S_2(t).
    \]
    The parameter is itself random, and therefore the unconditional limit is a
    mixed Poisson distribution. Since the probability that a Poisson random
    variable of parameter $\mu$ is zero equals $e^{-\mu}$, this leads to
    the mixed Gumbel distribution in
    \eqref{eq:max-height-fluctuations-results}.
    
    Rather than expanding all factorial moments, we use
    \cite[Theorem~1]{ArratiaGoldsteinGordon1989}, which gives a direct error
    bound for a sum of dependent Bernoulli random variables and isolates exactly
    the contribution of the overlapping pairs.
    
    \begin{lemma}[Conditional Poisson approximation]
    \label{lem:poisson-collisions-proof}
    There is a universal constant $C<\infty$ such that, conditionally on
    $\mathbf P(t)$,
    \begin{equation}\label{eq:chen-stein-proof}
            d_{\rm TV}\left(\mathcal L(C_n(t)\mid\mathbf P(t)),\Pois(\mu_n(t))\right)
            \le Cn^3\left[S_2(t)^2+S_3(t)\right].
    \end{equation}
    \end{lemma}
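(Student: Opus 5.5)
We apply \cite[Theorem~1]{ArratiaGoldsteinGordon1989} conditionally on $\mathbf P(t)$. Given $\mathbf P(t)$, the labels $1,\dots,n$ are assigned independently to fragments with probabilities $(P_i(t))$. The index set is the family $\Gamma$ of unordered pairs $\alpha=\{a,b\}\subset[n]$, with indicators $I_\alpha=I_{ab}(t)$ and $p_\alpha=S_2(t)$. The natural dependency neighbourhood is
\[
B_\alpha:=\{\gamma\in\Gamma:\gamma\cap\alpha\neq\emptyset\},
\]
so that $|B_\alpha|\le 2n$. If $\gamma\cap\alpha=\emptyset$, the indicator $I_\gamma$ is a function only of the fragments chosen by labels outside $\alpha$. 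Hence $I_\alpha$ is conditionally independent of $\sigma(I_\gamma:\gamma\notin B_\alpha)$, and the third Arratia--Goldstein--Gordon term $b_3$ vanishes. The theorem then yields
\[
d_{\rm TV}\le 2(b_1+b_2)
\]
(the factor $\min(1,1/\mu)$ can simply be bounded by $1$).

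We estimate the two remaining terms. For $b_1=\sum_\alpha\sum_{\gamma\in B_\alpha}p_\alpha p_\gamma$, there are $\binom n2$ choices of $\alpha$ and at most $2n$ choices of $\gamma$. Therefore
\[
b_1\le n^3S_2(t)^2.
\]
For $b_2=\sum_\alpha\sum_{\gamma\in B_\alpha,\gamma\ne\alpha}\Eb(I_\alpha I_\gamma\mid\mathbf P(t))$, each such $\gamma$ shares exactly one label with $\alpha$. As in the proof of Lemma~\ref{lem:collision-moments-proof}, $I_\alpha I_\gamma=1$ exactly when three labels fall in the same fragment, which has conditional probability $S_3(t)$. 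The number of ordered such pairs $(\alpha,\gamma)$ is $6\binom n3\le n^3$, so
\[
b_2\le n^3S_3(t).
\]
Combining the two bounds gives \eqref{eq:chen-stein-proof} with $C=2$.

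The only delicate step is verifying the exact conditional independence required for $b_3=0$. It must hold jointly with respect to all $I_\gamma$, $\gamma\notin B_\alpha$, and not merely pairwise. This follows from the paintbox construction: conditionally on $\mathbf P(t)$, the fragment choices of distinct labels are independent random variables. The vector $(I_\gamma)_{\gamma\notin B_\alpha}$ is measurable with respect to the choices of labels in $[n]\setminus\alpha$, while $I_\alpha$ depends only on the choices of the two labels in $\alpha$. Everything else is counting.
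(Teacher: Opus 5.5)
Your proposal is correct and follows the paper's proof essentially step for step. Both apply Theorem~1 of Arratia--Goldstein--Gordon to the pair indicators, with neighbourhoods $B_\alpha$ consisting of the pairs that overlap $\alpha$, and obtain $b_1\le n^3S_2(t)^2$, $b_2\le n^3S_3(t)$ from the triple-collision probability, and $b_3=0$ from the conditional independence of the label assignments given $\mathbf P(t)$. Your explicit counting, which gives $C=2$, and your check that the independence holds jointly for $\sigma(I_\gamma:\gamma\notin B_\alpha)$ are simply more detailed than the paper's condensed write-up.
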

    
    \begin{proof}[Proof of Lemma~\ref{lem:poisson-collisions-proof}]
    Condition on $\mathbf P(t)$ and let
    \[
            \mathcal I_n:=\bigl\{\{a,b\}:1\le a<b\le n\bigr\}.
    \]
    For $\alpha=\{a,b\}\in\mathcal I_n$, write $I_\alpha:=I_{ab}(t)$ and let
    \[
            B_\alpha:=\{\gamma\in\mathcal I_n:\gamma\cap\alpha\ne\varnothing\}.
    \]
    We apply \cite[Theorem~1]{ArratiaGoldsteinGordon1989}. In the notation of
    that theorem, the three error terms are denoted by $b_1,b_2,b_3$, and we treat them in this order.
    
    First,
    \[
            \Eb(I_\alpha\mid\mathbf P(t))=S_2(t),
            \qquad|B_\alpha|\le2n,
            \qquad
            \text{and}\qquad
            |\mathcal I_n|=\binom n2,
    \]
    so it follows that
    \[
            b_1
            :=
            \sum_{\alpha\in\mathcal I_n}
            \sum_{\gamma\in B_\alpha}
            \Eb(I_\alpha\mid\mathbf P(t))
            \Eb(I_\gamma\mid\mathbf P(t))
            \le Cn^3S_2(t)^2.
    \]
    If $\alpha\ne\gamma$ and $\gamma\in B_\alpha$, then the two pairs share one
    label. Both indicators are equal to one exactly when the corresponding
    three labels fall in the same fragment, so
    \[
            \Eb(I_\alpha I_\gamma\mid\mathbf P(t))=S_3(t).
    \]
    There are at most $Cn^3$ ordered overlapping pair-pairs, and therefore
    \[
            b_2
            :=
            \sum_{\alpha\in\mathcal I_n}
            \sum_{\substack{\gamma\in B_\alpha\\\gamma\ne\alpha}}
            \Eb(I_\alpha I_\gamma\mid\mathbf P(t))
            \le Cn^3S_3(t).
    \]
    
    Finally, conditionally on $\mathbf P(t)$, the sampled labels are independent.
    Therefore $I_\alpha$ is independent of the indicators $I_\gamma$ for
    $\gamma\notin B_\alpha$, and the third error term is $b_3=0$.
    The bound of \cite[Theorem~1]{ArratiaGoldsteinGordon1989} now gives
    \eqref{eq:chen-stein-proof}.
    \end{proof}
    
    We are almost ready to conclude. In what follows, the estimate used in the law of large
    numbers,
    \[
            S_3(t)\le S_2(t)^{3/2},
    \]
    is not strong enough. Indeed, at the critical time
    $t_n(x)=(2\log n+x)/r_\beta$, we have $S_2(t_n(x))$ of order $n^{-2}$, and
    this estimate only gives that $n^3S_3(t_n(x))$
    from \eqref{eq:chen-stein-proof} is bounded in probability.
    For the Poisson approximation, we need this quantity to converge to zero.
    
    We therefore use the sharper bound
    \[
            S_3(t)\le P_*(t)S_2(t),
            \quad
            \text{where }
            P_*(t):=\sup_{i\ge1}P_i(t).
    \]
    We will prove that at the critical time $t_n$, $P_*(t_n)$ 
    would be asymptotically smaller than $1/n$. Note that the scale $1/n$ is natural here: conditionally on a fragment of mass $p$, it receives on average $np$ sampled labels. Thus a fragment of mass of order $1/n$ still has a non-negligible chance of receiving more than one label. For the individual fragments to be negligible in the Poisson approximation, we need the largest fragment to be smaller than this sampling scale. The next lemma formalises this.
    
    \begin{lemma}[Largest fragment at the collision scale]
    \label{lem:largest-fragment-collision-proof}
    For every fixed $x\in\Rb$, put
    \begin{equation}\label{eq:t-n-x-proof}
            t_n(x):=\frac{2\log n+x}{r_\beta}.
    \end{equation}
    Then
    \begin{equation}\label{eq:largest-fragment-small-proof}
            nP_*(t_n(x))\longrightarrow0,
            \qquad\text{in probability}.
    \end{equation}
    \end{lemma}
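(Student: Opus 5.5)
The plan is a first-moment bound on a power sum of order slightly above $2$, chosen so that the exponent of $n$ becomes negative. The key inequality is that, for every $p>0$,
\[
        P_*(t)^{p+1}\le\sum_{i\ge1}P_i(t)^{p+1}=S_{p+1}(t),
\]
while \eqref{eq:power-sum-mean-prelim} gives $\Eb S_{p+1}(t)=e^{-t\phi_\beta(p)}$. Fix $\varepsilon>0$. Markov's inequality then gives
\[
        \P\bigl(nP_*(t_n(x))>\varepsilon\bigr)
        \le \P\bigl(S_{p+1}(t_n(x))>(\varepsilon/n)^{p+1}\bigr)
        \le \varepsilon^{-(p+1)}\,e^{-x\phi_\beta(p)/r_\beta}\;
        n^{\,p+1-2\phi_\beta(p)/r_\beta}.
\]
It therefore suffices to find some $p>0$ with
\[
        \frac{\phi_\beta(p)}{p+1}>\frac{r_\beta}{2}=\frac{\phi_\beta(1)}{2}.
\]

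The choice $p=1$ gives equality, which is exactly why the naive choice $S_2$ is insufficient and only yields tightness of $nP_*$. So I will perturb $p$ around $1$. Set $g(p):=\phi_\beta(p)/(p+1)$. Then $g(1)=\phi_\beta(1)/2$ and
\[
        g'(1)=\frac{2\phi_\beta'(1)-\phi_\beta(1)}{4}.
\]
This derivative is strictly positive, by the integral identity already verified in the proof of Lemma~\ref{lem:quadratic-martingale-proof} (condition~(4) of \cite{BertoinRouaultDiscretization}). Hence for some $p=1+\delta$ with $\delta>0$ small we get $g(p)>g(1)$. With this $p$ the exponent $p+1-2\phi_\beta(p)/r_\beta$ is strictly negative. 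The prefactor depends only on $x$, $\varepsilon$ and $p$, so the probability tends to zero for every fixed $x$ and $\varepsilon$, which is \eqref{eq:largest-fragment-small-proof}.

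Two minor points need checking. First, $\phi_\beta$ is differentiable at $1$ and finite near it, which is clear from \eqref{eq:phi-beta-intro} because the integrand is smooth in $z$ for $z>0$. Second, \eqref{eq:power-sum-mean-prelim} applies with $q=p+1>1$. The only genuinely nontrivial input is the strict inequality $2\phi_\beta'(1)>\phi_\beta(1)$, and it is already available from the martingale lemma, so I expect no serious obstacle.

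If a more quantitative statement is wanted, the same computation shows that $nP_*(t_n(x))$ is in fact $O_{\P}(n^{-\kappa})$ for some $\kappa>0$. That polynomial rate is what will later make $n^3S_3\le n^3P_*S_2=(nP_*)\,n^2S_2$ vanish, since $n^2S_2(t_n(x))$ is tight by Lemma~\ref{lem:quadratic-martingale-proof}.
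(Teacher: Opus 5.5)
Your proposal is correct and is essentially the paper's proof: you bound $P_*^{p+1}$ by $S_{p+1}$, apply Markov's inequality with $\Eb S_{p+1}(t)=e^{-t\phi_\beta(p)}$, and choose $p$ slightly above $1$. That choice works because $\frac{\D}{\D p}\bigl[\phi_\beta(p)/(p+1)\bigr]_{p=1}=\bigl(2\phi_\beta'(1)-\phi_\beta(1)\bigr)/4>0$, using the positivity already checked in Lemma~\ref{lem:quadratic-martingale-proof}, so the exponent of $n$ is negative.
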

    
    \begin{proof}[Proof of Lemma~\ref{lem:largest-fragment-collision-proof}]
    Put
    \[
            h(p):=\frac{\phi_\beta(p)}{p+1}.
    \]
    By the calculation in the proof of
    Lemma~\ref{lem:quadratic-martingale-proof},
    \[
            h'(1)
            =
            \frac{2\phi_\beta'(1)-\phi_\beta(1)}4
            >0,
            \qquad
            \text{so}\quad
            \frac{\phi_\beta(p)}{p+1}>\frac{r_\beta}{2}
    \]
    for some $p>1$ sufficiently close
    to $1$.
    
    Since $P_*(t)^{p+1}\le S_{p+1}(t)$, Markov's inequality and
    \eqref{eq:power-sum-mean-prelim} give, for every $\delta>0$,
    \[
            \P\bigl(nP_*(t_n(x))>\delta\bigr)
            \le
            \delta^{-(p+1)}n^{p+1}\Eb S_{p+1}(t_n(x))
            =
            \delta^{-(p+1)}e^{-x\phi_\beta(p)/r_\beta}
            n^{p+1-2\phi_\beta(p)/r_\beta}.
    \]
    The exponent of $n$ is negative by the choice of $p$ above, which proves
    \eqref{eq:largest-fragment-small-proof}.
    \end{proof}
    
    \begin{proof}[Proof of the second part of
    Theorem~\ref{thm:max-height-results},
    \eqref{eq:max-height-fluctuations-results}]
    Fix $x\in\Rb$ and put $t_n(x)$ as in \eqref{eq:t-n-x-proof}. By
    Lemma~\ref{lem:quadratic-martingale-proof},
    \begin{equation}\label{eq:mu-limit-proof}
            \mu_n(t_n(x))
            =
            \frac12\left(1-\frac1n\right)e^{-x}W_\beta(t_n(x))
            \longrightarrow
            \frac12e^{-x}W_\beta(\infty),
            \qquad\text{almost surely}.
    \end{equation}
    
    The error in the conditional Poisson approximation tends to zero, because
    \[
            n^3S_2(t_n(x))^2
            =
            n^{-1}e^{-2x}W_\beta(t_n(x))^2
            \longrightarrow0,
            \qquad\text{almost surely}.
    \]
    Moreover,
    \[
            S_3(t)\le P_*(t)S_2(t),
    \]
    and therefore
    \[
            n^3S_3(t_n(x))
            \le
            \bigl[nP_*(t_n(x))\bigr]
            \bigl[n^2S_2(t_n(x))\bigr]
            \longrightarrow0,
            \qquad\text{in probability},
    \]
    by Lemma~\ref{lem:largest-fragment-collision-proof} and the convergence
    \[
            n^2S_2(t_n(x))
            =
            e^{-x}W_\beta(t_n(x))
            \longrightarrow
            e^{-x}W_\beta(\infty),
            \qquad\text{almost surely}.
    \]
    Consequently, Lemma~\ref{lem:poisson-collisions-proof} gives
    \[
            \left|
            \P(C_n(t_n(x))=0\mid\mathbf P(t_n(x)))
            -
            e^{-\mu_n(t_n(x))}
            \right|
            \longrightarrow0,
            \qquad\text{in probability}.
    \]
    The difference is bounded by one, so it also converges to zero in $L^1$.
    Hence
    \[
            \P(C_n(t_n(x))=0)
            =
            \Eb\left[e^{-\mu_n(t_n(x))}\right]+\so(1).
    \]
    By \eqref{eq:mu-limit-proof} and dominated convergence,
    \[
            \Eb\left[e^{-\mu_n(t_n(x))}\right]
            \longrightarrow
            \Eb\left[
            \exp\left(-\frac12e^{-x}W_\beta(\infty)\right)
            \right].
    \]
    Finally, using \eqref{eq:Dstar-collision-proof},
    \[
            \P\bigl(r_\beta D_n^*-2\log n\le x\bigr)
            =
            \P(D_n^*\le t_n(x))
            =
            \P(C_n(t_n(x))=0),
    \]
    and therefore
    \[
            \P\bigl(r_\beta D_n^*-2\log n\le x\bigr)
            \longrightarrow
            \Eb\left[
            \exp\left(-\frac12e^{-x}W_\beta(\infty)\right)
            \right].
    \]
    Together with Lemma~\ref{lem:quadratic-martingale-proof} and the proof of
    \eqref{eq:max-height-lln-results}, this completes the proof of
    Theorem~\ref{thm:max-height-results}.
    \end{proof}
    
    \section{Meromorphic analysis}
    \label{sec:meromorphic-analysis}
    
    We now turn to the proofs of the meromorphic and residue expansions stated in
    Section~\ref{sec:meromorphic}. We begin with the classical meromorphic case,
    where the L\'evy density is a positive mixture of exponentials and the general
    potential representation from Proposition~\ref{prop:U-expression} applies
    directly.
    
    \subsection{The standard meromorphic case}
    \label{sec:negative-beta-proofs}
    
    \begin{proof}[Proof of Theorem~\ref{thm:negative-potential-results}]
    For $-2<\beta<0$, we have already explained in the discussion after
    \eqref{eq:levy-mixture-negative-results} that in this case $\xi$ is meromorphic. Proposition~\ref{prop:U-expression} then gives
    \eqref{eq:U-negative-results} and
    \eqref{eq:reciprocal-negative-results}, together with the positivity of the
    residues and the local uniform convergence of the series.
    
    It remains to identify the atom at zero: recall \eqref{eq:phi-beta-intro}, namely,
    \[
            \phi_\beta(z)
            =
            \int_0^1(1-s^z)s^{\beta+1}(1-s)^\beta\D s,
            \qquad
            \text{for }z>0.
    \]
    If $-1<\beta<0$, monotone convergence gives
    \[
            \phi_\beta(z)\longrightarrow
            \int_0^1s^{\beta+1}(1-s)^\beta\D s
            =
            B(\beta+2,\beta+1),
            \qquad\text{as }z\to\infty.
    \]
    If $-2<\beta\le-1$, the last integral is infinite, and monotone convergence
    instead gives $\phi_\beta(z)\to\infty$. Hence
    \[
            \eta_\beta
            =
            \lim_{z\to\infty}\frac1{\phi_\beta(z)}
            =
            \begin{cases}
            B(\beta+2,\beta+1)^{-1},
            &-1<\beta<0,\\
            0,
            &-2<\beta\le-1.
            \end{cases}
    \]
    
    Finally, the Taylor expansion at the origin gives
    \[
            \phi_\beta(z)
            =
            \mu_\beta z+\frac12\phi_\beta''(0)z^2+O_\beta(z^3),
    \]
    and therefore
    \begin{equation}\label{eq:reciprocal-origin-negative-proof}
            \frac1{\phi_\beta(z)}
            =
            \frac1{\mu_\beta z}
            -
            \frac{\phi_\beta''(0)}{2\mu_\beta^2}
            +
            O_\beta(z).
    \end{equation}
    On the other hand, subtracting $1/(\mu_\beta z)$ from
    \eqref{eq:reciprocal-negative-results} gives, for $z>0$,
    \[
            \frac1{\phi_\beta(z)}-\frac1{\mu_\beta z}
            =
            \eta_\beta
            +
            \sum_{k\ge1}
            \frac1{\phi_\beta'(s_{\beta,k})(z-s_{\beta,k})}.
    \]
    Since $s_{\beta,k}<0$ and the residues are positive, the summands increase as
    $z\downarrow0$. Therefore monotone convergence gives
    \[
            \lim_{z\downarrow0}
            \left[
            \frac1{\phi_\beta(z)}-\frac1{\mu_\beta z}
            \right]
            =
            \eta_\beta
            -
            \sum_{k\ge1}
            \frac1{s_{\beta,k}\phi_\beta'(s_{\beta,k})}.
    \]
    Comparing this with \eqref{eq:reciprocal-origin-negative-proof} proves
    \eqref{eq:finite-part-negative-results}.
    \end{proof}
    
    \begin{proof}[Proof of Theorem~\ref{thm:negative-height-results}]
    Recall that
    \[
            F_n(x)=1-(1-e^{-x})^{n-1}.
    \]
    By Theorem~\ref{thm:potential-identities-results} and
    \eqref{eq:U-negative-results}, and since $F_n(0)=1$, Tonelli's theorem gives
    \begin{equation}\label{eq:EDn-negative-integrals-proof}
            \Eb D_n
            =
            \eta_\beta
            +
            \frac1{\mu_\beta}\int_0^\infty F_n(x)\D x
            +
            \sum_{k\ge1}
            \frac1{\phi_\beta'(s_{\beta,k})}
            \int_0^\infty F_n(x)e^{s_{\beta,k}x}\D x.
    \end{equation}
    The first integral is
    \[
            \int_0^\infty F_n(x)\D x
            =
            \int_0^1\frac{1-(1-y)^{n-1}}y\D y
            =
            h_{n-1}.
    \]
    For $s<0$, the same substitution $y=e^{-x}$ gives
    \[
            \int_0^\infty F_n(x)e^{sx}\D x
            =
            \int_0^1\left[1-(1-y)^{n-1}\right]y^{-s-1}\D y
            =
            -\frac1s-B(-s,n)
            =
            -\frac1s-\frac{\Gamma(-s)\Gamma(n)}{\Gamma(n-s)}.
    \]
    Substituting these into
    \eqref{eq:EDn-negative-integrals-proof} and using
    \eqref{eq:finite-part-negative-results}, we obtain
    \[
            \Eb D_n
            =
            \frac{h_{n-1}}{\mu_\beta}
            -
            \frac{\phi_\beta''(0)}{2\mu_\beta^2}
            -
            \sum_{k\ge1}
            \frac{\Gamma(-s_{\beta,k})\Gamma(n)}
            {\phi_\beta'(s_{\beta,k})\Gamma(n-s_{\beta,k})},
    \]
    which is the first equality in
    \eqref{eq:EDn-negative-exact-results}.
    
    We are left with estimating the remainder after the first $r$ terms. Put
    \[
            a_{\beta,k}:=-s_{\beta,k}>0,
            \qquad\text{and}\qquad
            R_{\beta,r}(n)
            :=
            -\sum_{k\ge r+1}
            \frac{B(a_{\beta,k},n)}
            {\phi_\beta'(s_{\beta,k})}.
    \]
    The sequence $(a_{\beta,k})_{k\ge1}$ is increasing. Moreover, for every
    $n\ge2$,
    \[
            aB(a,n)
            =
            \frac{\Gamma(a+1)\Gamma(n)}{\Gamma(a+n)}
            =
            \frac{\Gamma(n)}{(a+1)(a+2)\cdots(a+n-1)},
    \]
    which is decreasing in $a>0$. Consequently,
    \[
            B(a_{\beta,k},n)
            \le
            \frac{a_{\beta,r+1}}{a_{\beta,k}}
            B(a_{\beta,r+1},n),
            \qquad\text{for }k\ge r+1.
    \]
    It follows that
    \[
            |R_{\beta,r}(n)|
            \le
            C_{\beta,r}B(a_{\beta,r+1},n),
            \qquad\text{where}\qquad
            C_{\beta,r}
            :=
            a_{\beta,r+1}
            \sum_{k\ge r+1}
            \frac1{a_{\beta,k}\phi_\beta'(s_{\beta,k})}.
    \]
    The constant $C_{\beta,r}$ is finite, since
    \eqref{eq:finite-part-negative-results} gives
    \[
            \sum_{k\ge1}
            \frac1{a_{\beta,k}\phi_\beta'(s_{\beta,k})}
            =
            -\frac{\phi_\beta''(0)}{2\mu_\beta^2}
            -\eta_\beta
            <\infty.
    \]
    Finally, using $1-y\le e^{-y}$,
    \[
            B(a,n)
            =
            \int_0^1y^{a-1}(1-y)^{n-1}\D y
            \le
            \int_0^\infty y^{a-1}e^{-(n-1)y}\D y
            =
            \Gamma(a)(n-1)^{-a},
            \quad\text{for }a>0\text{ and }n\ge2.
    \]
    Therefore
    \begin{equation}
        \label{eq:error_bound}
            |R_{\beta,r}(n)|
            \le
            C_{\beta,r}\Gamma(a_{\beta,r+1})
            (n-1)^{-a_{\beta,r+1}}
            =
            O_{\beta,r}\bigl(n^{s_{\beta,r+1}}\bigr),
    \end{equation}
    which proves the second equality in
    \eqref{eq:EDn-negative-exact-results}.
    \end{proof}

    \subsection{The finite-index meromorphic case}
    \label{sec:positive-beta-proofs}
    The proofs in the positive-$\beta$ case use two ingredients. The
    finite-index Nevanlinna argument from
    Section~\ref{sec:nevanlinna-preliminaries} controls the zero set of
    $\phi_\beta$, while the beta-function representation gives the decay on
    vertical lines needed for contour inversion.
    
    Let us define
    \[
            A_\beta:=B(\beta+2,\beta+1).
    \]
    Thus, for $\beta>0$,
    \[
            \phi_\beta(z)=A_\beta-B(z+\beta+2,\beta+1),
            \qquad\text{and}\qquad
            \eta_\beta=A_\beta^{-1}.
    \]
    
    We first record the vertical-line estimate used below.
    
    \begin{lemma}[Vertical-line estimate]
    \label{lem:positive-beta-estimates}
    Let $\beta>0$. For every bounded interval $I\subset\Rb$,
    \begin{equation}\label{eq:vertical-beta-estimate-proof}
            B(z+\beta+2,\beta+1)
            =
            O_{\beta,I}\bigl(|\Im z|^{-\beta-1}\bigr),
            \qquad\text{as }|\Im z|\to\infty,
    \end{equation}
    uniformly for $\re z\in I$. Consequently, if $\phi_\beta$ has no zero on
    $\re z=-\sigma$, then
    \begin{equation}\label{eq:vertical-F-integrability-proof}
            A_{\beta,\sigma}
            :=
            \frac1{2\pi}
            \int_{-\infty}^{\infty}
            |F_\beta(-\sigma+it)|\D t
            <\infty.
    \end{equation}
    \end{lemma}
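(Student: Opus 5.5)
The plan is to use the Gamma-function form of the beta function together with Stirling's formula in vertical strips. For $\Re z\in I$ and $\beta>0$ we write
\[
        B(z+\beta+2,\beta+1)=\Gamma(\beta+1)\,\frac{\Gamma(z+\beta+2)}{\Gamma(z+2\beta+3)}.
\]
The standard uniform asymptotic
\[
        \frac{\Gamma(w+a)}{\Gamma(w+b)}\sim w^{a-b},
        \qquad\text{as }|w|\to\infty\text{ in }|\arg w|\le\pi-\delta,
\]
applies here. It can also be read off directly from Stirling's formula $\log\Gamma(w)=(w-\tfrac12)\log w-w+\tfrac12\log(2\pi)+O(|w|^{-1})$, which holds uniformly in such sectors. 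Take $w=z$ with $\Re z\in I$ bounded and $|\Im z|\to\infty$; then $\arg z\to\pm\pi/2$, so the sector condition holds uniformly. This gives
\[
        |\Gamma(z+\beta+2)/\Gamma(z+2\beta+3)|\le C_{\beta,I}|z|^{-\beta-1},
\]
and since $|z|\ge|\Im z|$, this yields \eqref{eq:vertical-beta-estimate-proof}. The poles of $\Gamma(z+\beta+2)$ lie on the real axis, so they play no role once $|\Im z|$ is large, and uniformity in $\Re z\in I$ follows from the uniformity of Stirling's remainder on compact sets of real shifts.

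For the second part, we write $F_\beta=1/\phi_\beta-A_\beta^{-1}$ and set $b(z):=B(z+\beta+2,\beta+1)$, so that $\phi_\beta=A_\beta-b$. Then
\[
        F_\beta(z)=\frac{1}{A_\beta-b(z)}-\frac1{A_\beta}=\frac{b(z)}{A_\beta\bigl(A_\beta-b(z)\bigr)}.
\]
On the line $\Re z=-\sigma$, the first part shows $|b(-\sigma+it)|\le C|t|^{-\beta-1}$ for $|t|\ge T$. Enlarging $T$ if necessary gives $|b|\le A_\beta/2$ there, so $|F_\beta(-\sigma+it)|\le 2A_\beta^{-2}C|t|^{-\beta-1}$. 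This is integrable at infinity because $\beta+1>1$.

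On the compact segment $|t|\le T$, the function $F_\beta$ is continuous provided $\phi_\beta$ has no zeros and $b$ has no poles there. Zeros are excluded by hypothesis. The poles of $b$ are the poles of $\Gamma(z+\beta+2)$, namely $z=-\beta-2-k$ for $k\in\Nb_0$. At such a point $b=\infty$, so $1/\phi_\beta$ vanishes and $F_\beta=-A_\beta^{-1}$; more precisely, $1/\phi_\beta=-1/b+O(b^{-2})$ extends holomorphically across these points. Hence $F_\beta$ extends continuously to the segment, is bounded there, and \eqref{eq:vertical-F-integrability-proof} follows.

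The main obstacle is the uniform Stirling step: checking that the ratio estimate holds uniformly in $\Re z\in I$, including negative real parts. We handle it by noting that for $|\Im z|\ge1$ the arguments $z+\beta+2$ and $z+2\beta+3$ stay in the sector $|\arg|\le\pi-\delta$ for some $\delta=\delta(I)>0$. Alternatively, we can shift by the functional equation $\Gamma(w+1)=w\Gamma(w)$ into $\Re w>0$, which introduces only finitely many rational factors, each comparable to $|z|$.
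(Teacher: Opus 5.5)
Your proposal is correct and follows the same route as the paper: you bound $B(z+\beta+2,\beta+1)$ on vertical strips via the Gamma-ratio (Stirling) estimate, then write $F_\beta=B(z+\beta+2,\beta+1)/\bigl(A_\beta\phi_\beta(z)\bigr)$, use $\phi_\beta\to A_\beta$ as $|\Im z|\to\infty$, and use boundedness on the compact part of the line. Your explicit treatment of the case where $\Re z=-\sigma$ passes through a pole of the beta function is a harmless refinement of a point the paper leaves implicit: there $1/\phi_\beta$ extends holomorphically with value $0$, so $F_\beta$ stays continuous.
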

    
    \begin{proof}[Proof of Lemma~\ref{lem:positive-beta-estimates}]
    The first assertion follows from
    \[
            B(z+\beta+2,\beta+1)
            =
            \Gamma(\beta+1)
            \frac{\Gamma(z+\beta+2)}
            {\Gamma(z+2\beta+3)}
    \]
    and the standard gamma-ratio estimate, uniformly on bounded vertical strips.
    Moreover,
    \[
            F_\beta(z)
            =
            \frac{B(z+\beta+2,\beta+1)}
            {A_\beta\phi_\beta(z)}.
    \]
    If $\phi_\beta$ has no zero on $\re z=-\sigma$, then its modulus is bounded
    away from zero on every bounded part of that line, while
    $\phi_\beta(-\sigma+it)\to A_\beta$ as $|t|\to\infty$. Hence
    \[
            F_\beta(-\sigma+it)
            =
            O_{\beta,\sigma}(|t|^{-\beta-1}),
    \]
    which is integrable because $\beta>0$.
    \end{proof}
    
    \begin{proof}[Proof of Theorem~\ref{thm:positive-zero-results}]
    Let $z=u+iv$ with $u\ge0$. From \eqref{eq:phi-beta-intro},
    \[
            \re\phi_\beta(z)
            =
            \int_0^1
            \left[1-s^u\cos(v\log s)\right]
            s^{\beta+1}(1-s)^\beta\D s.
    \]
    The integrand is nonnegative. If $u>0$, it is strictly positive for
    $s\in(0,1)$. If $u=0$, the integral can vanish only when
    $\cos(v\log s)=1$ for almost every $s\in(0,1)$, which implies $v=0$.
    Therefore
    \[
            \phi_\beta(z)=0,\quad \re z\ge0
            \qquad\Longrightarrow\qquad
            z=0.
    \]
    
    The discussion in Section~\ref{sec:nevanlinna-preliminaries} shows that
    \[
            \varepsilon_\beta\frac{z}{\phi_\beta(z)}
            \in\mathcal N_{<\infty}.
    \]
    The Krein--Langer factorisation quoted there implies that a meromorphic
    function in $\mathcal N_{<\infty}$ has only finitely many nonreal poles.
    The nonzero poles of $z/\phi_\beta(z)$ are precisely the nonzero zeros of
    $\phi_\beta$. Hence $\phi_\beta$ has only finitely many nonreal zeros,
    counted with multiplicity. Since
    $\phi_\beta(\bar z)=\overline{\phi_\beta(z)}$, they occur in conjugate
    pairs.
    
    If $\beta$ is a positive integer, then
    $B(z+\beta+2,\beta+1)$ is rational, and therefore $1/\phi_\beta$ is
    rational and $\mathcal Z_\beta$ is finite.
    
    Finally, fix $\sigma>0$. By
    \eqref{eq:vertical-beta-estimate-proof},
    \[
            \phi_\beta(z)\longrightarrow A_\beta,
            \qquad\text{as }|\Im z|\to\infty,
    \]
    uniformly for $-\sigma\le\re z\le0$. Thus $\phi_\beta$ has no zero in this
    strip when $|\Im z|$ is sufficiently large. Since the zeros of a meromorphic
    function are isolated, only finitely many remain in the bounded part of the
    strip, which proves that $\mathcal Z_{\beta,\sigma}$ is finite.
    
    The remaining claims concerning the zero structure follow from results in
Appendix~\ref{app:extmeromorphic-potential-proof}: Proposition~\ref{appendix:zeros:integer_beta} treats the integer case,
and Proposition~\ref{prop:complexZero} the noninteger one.
    \end{proof}
    
    \begin{proof}[Proof of Theorem~\ref{thm:positive-strip-results}]
    Fix $c>0$ and put
    \[
            \widetilde U_\beta(\D x)
            :=
            U_\beta(\D x)-\eta_\beta\delta_0(\D x).
    \]
    The finite measure $e^{-cx}\widetilde U_\beta(\D x)$ has Fourier transform
    $F_\beta(c+it)$. By \eqref{eq:vertical-beta-estimate-proof} and the fact that
    $\phi_\beta$ has no zero on $\re z=c$, this Fourier
    transform is integrable. Fourier inversion therefore shows that
    $\widetilde U_\beta$ has a continuous density $u_\beta$ on $(0,\infty)$ and
    \begin{equation}\label{eq:bromwich-positive-proof}
            u_\beta(x)
            =
            \frac1{2\pi i}
            \int_{c-i\infty}^{c+i\infty}
            e^{zx}F_\beta(z)\D z,
            \qquad\text{for }x>0.
    \end{equation}
    
    We move the contour in \eqref{eq:bromwich-positive-proof} from
    $\re z=c$ to $\re z=-\sigma$. By
    \eqref{eq:vertical-beta-estimate-proof}, the integrals over the horizontal
    parts of the rectangle tend to zero. The poles crossed are the simple pole at
    $0$ and the poles of $1/\phi_\beta$ at the zeros
    $\rho\in\mathcal Z_{\beta,\sigma}$. The poles of the beta function do not
    contribute, since they are zeros rather than poles of $1/\phi_\beta$.
    
    The pole at zero contributes $1/\mu_\beta$. If $\rho$ has multiplicity
    $m_\rho$, the residue calculation for the full principal part
    \eqref{eq:principal-part-positive-results} gives the contribution
    \[
            u_{\beta,\rho}(x)
            =
            e^{\rho x}
            \sum_{\ell=1}^{m_\rho}
            c_{\rho,\ell}\frac{x^{\ell-1}}{(\ell-1)!}.
    \]
    Consequently,
    \[
            u_\beta(x)
            =
            \frac1{\mu_\beta}
            +
            \sum_{\rho\in\mathcal Z_{\beta,\sigma}}
            u_{\beta,\rho}(x)
            +
            r_{\beta,\sigma}(x).
    \]
    Moreover, by \eqref{eq:vertical-F-integrability-proof},
    \[
            |r_{\beta,\sigma}(x)|
            \le
            A_{\beta,\sigma}e^{-\sigma x},
            \qquad\text{for }x>0.
    \]
    This proves \eqref{eq:u-positive-strip-results}.
    
    It remains to derive the expansion of the mean height. For
    $\rho\in\mathcal Z_{\beta,\sigma}$,
    \[
    \begin{split}
            \int_0^\infty F_n(x)u_{\beta,\rho}(x)\D x
            &=
            \sum_{\ell=1}^{m_\rho}
            \frac{c_{\rho,\ell}}{(\ell-1)!}
            \frac{\partial^{\ell-1}}{\partial\rho^{\ell-1}}
            \left[-\frac1\rho-B(-\rho,n)\right]\\
            &=
            \mathcal P_{\beta,\rho}(0)-\mathcal B_{\beta,\rho}(n).
    \end{split}
    \]
    Taking the Laplace transform of the density decomposition above gives, for
    $z>0$,
    \[
            F_\beta(z)
            =
            \frac1{\mu_\beta z}
            +
            \sum_{\rho\in\mathcal Z_{\beta,\sigma}}
            \mathcal P_{\beta,\rho}(z)
            +
            \int_0^\infty e^{-zx}r_{\beta,\sigma}(x)\D x.
    \]
    Subtracting $1/(\mu_\beta z)$ and letting $z\downarrow0$ gives
    \[
            \eta_\beta
            +
            \sum_{\rho\in\mathcal Z_{\beta,\sigma}}
            \mathcal P_{\beta,\rho}(0)
            +
            \int_0^\infty r_{\beta,\sigma}(x)\D x
            =
            -\frac{\phi_\beta''(0)}{2\mu_\beta^2}.
    \]
    Using Theorem~\ref{thm:potential-identities-results}, we therefore obtain
    \[
            \Eb D_n
            =
            \frac{h_{n-1}}{\mu_\beta}
            -
            \frac{\phi_\beta''(0)}{2\mu_\beta^2}
            -
            \sum_{\rho\in\mathcal Z_{\beta,\sigma}}
            \mathcal B_{\beta,\rho}(n)
            +
            R_{\beta,\sigma}(n),
    \]
    where
    \[
            R_{\beta,\sigma}(n)
            :=
            -\int_0^\infty
            (1-e^{-x})^{n-1}r_{\beta,\sigma}(x)\D x.
    \]
    Finally,
    \[
            |R_{\beta,\sigma}(n)|
            \le
            A_{\beta,\sigma}B(\sigma,n)
            \le
            A_{\beta,\sigma}\Gamma(\sigma)(n-1)^{-\sigma},
    \]
    which proves \eqref{eq:EDn-positive-strip-results}.
    \end{proof}
    \section*{Acknowledgments}
    
    The idea of using potential theory and Bernstein functions in the present
    setting arose during a talk by Svante Janson at the Introductory Workshop
    \emph{Probability and Statistics of Discrete Structures} at the Simons
    Laufer Mathematical Sciences Institute. We thank him and the organizers for this opportunity.

    Yoana R. Chorbadzhiyska was supported by the European Union’s NextGenerationEU, through the National Recovery and Resilience Plan of the Republic of Bulgaria, project No BG-RRP-2.004-0008.
    Martin Minchev was supported by the SNSF SCIEX programme, grant
    IZSF-0-235789. Mladen Savov was partially supported by the financial funds allocated to the Sofia University
“St. Kliment Ohridski”, grant No. 80-10-41/2026.
    
    \section*{Use of artificial intelligence}
    
The main ideas and results of this work were developed in 2025 without the
use of artificial-intelligence tools. At a later stage, OpenAI's ChatGPT
 5.6 was used for language editing and as an additional check of the
arguments. It suggested using the Poisson approximation of Arratia,
Goldstein, and Gordon \cite[Theorem 1]{ArratiaGoldsteinGordon1989} in place
of the factorial-moment argument initially considered for the collision
count. It also pointed us to the theory of regenerative composition
structures as a way of establishing results for $L_n$. Once this connection
had been pointed out, it was straightforward to see that our representation
is, in essence, an extension to the full beta-splitting family of Iksanov's
critical-case construction for the harmonic descent chain
\cite{Iksanov-2025-Regenerative}; the corresponding identity for the height
is made explicit in
\cite[Proposition 2.1]{Iksanov-Nikitin-Yakymiv-2026}. This led us to include
the discrete-height limit laws in Appendix \ref{app:discrete-height}, based
on the work of Gnedin and Pitman \cite{Gnedin-Pitman-2005}. All suggestions
and mathematical arguments were independently checked by the authors, who
take full responsibility for the contents of the paper.
    
    \appendix
    \section{The potential measure of a meromorphic subordinator}
    \label{app:meromorphic-potential-proof}
    
    We give here the proof of
    Proposition~\ref{prop:U-expression}. The argument is based on the
    meromorphic Nevanlinna representation and, in particular, does not rely on a
    formal term-by-term inversion of an infinite Mittag--Leffler series.
    
    \begin{lemma}[Meromorphic complete Bernstein functions]
    \label{lemma:Nevanlinna}
    Let $f$ be a nonconstant complete Bernstein function which admits a
    meromorphic extension to $\Cb$. Then there exist $\gamma,\alpha\ge0$, a
    finite or infinite strictly increasing sequence
    \[
            0<b_1<b_2<\cdots,
    \]
    with $b_n\to\infty$ in the infinite case,
    and coefficients $B_n>0$ satisfying
    $\sum_n B_n/b_n^2<\infty$, such that
    \begin{equation}\label{eq:meromorphic-CBF-series}
            f(z)
            =
            \gamma+\alpha z
            +\sum_{n\ge1}B_n
            \left(
            \frac1{b_n}-\frac1{z+b_n}
            \right)
            =
            \gamma+\alpha z
            +\sum_{n\ge1}B_n
            \frac{z}{b_n(z+b_n)}.
    \end{equation}
    The series converges locally uniformly on
    $\Cb\setminus\{-b_n:n\ge1\}$. The poles of $f$ are real and simple. All zeros of $f$ are real and
    simple, and the zeros interlace with the poles.
    \end{lemma}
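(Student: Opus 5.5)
The plan is to start from the Stieltjes-type integral representation of complete Bernstein functions and read off the discrete structure from meromorphy. Recall that every complete Bernstein function admits the representation
\[
        f(z)=\gamma+\alpha z+\int_{(0,\infty)}\frac{z}{t+z}\,\sigma(\D t),
        \qquad\text{with }\gamma,\alpha\ge0,
\]
where $\sigma$ is a positive measure with $\int(1+t)^{-1}\sigma(\D t)<\infty$ \cite[Chapter~6]{Schilling-Song-Vondracek-2026}. This extends holomorphically to $\Cb\setminus(-\infty,0]$.

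The first step is to show that $\sigma$ is discrete. By the Stieltjes inversion formula, $\sigma$ is recovered from the jump of $\Im f$ across the negative half-line: $\sigma(\D t)$ is, up to a factor $t/\pi$, the weak limit of $\mp\Im f(-t\pm i\varepsilon)$ as $\varepsilon\downarrow0$. If $f$ is meromorphic on $\Cb$, then $f(-t+i0)=f(-t-i0)$ is real away from the poles, so this jump vanishes there. Near a pole $-b$, the local Laurent expansion shows that the boundary contribution is a point mass, and only a simple pole with negative residue is compatible with positivity of $\sigma$. A double pole would produce a derivative of $\delta$, which is not a positive measure.

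Writing $\sigma=\sum_n \widetilde B_n\delta_{b_n}$ gives $\frac{z}{z+b_n}\widetilde B_n=B_n\frac{z}{b_n(z+b_n)}$ with $B_n:=b_n\widetilde B_n$. The integrability condition then becomes $\sum B_n/b_n^2<\infty$ (after using $\int(1+t)^{-1}\D\sigma<\infty$ together with $b_n\ge b_1>0$). Here $0$ is excluded as a pole because $f$ is finite at $0^+$. The points $b_n$ are isolated, so $b_n\to\infty$ in the infinite case. Alternatively, one can simply invoke the meromorphic Nevanlinna representation \cite[Theorem~1, p.~197]{chebotarev1949routh}, applied to the Nevanlinna--Pick function $f$, which gives exactly this form. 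I would cite it and sketch the inversion argument above as justification.

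Local uniform convergence on $\Cb\setminus\{-b_n\}$ comes from a Weierstrass M-test. On a compact set $K$ avoiding the poles, we have $|z+b_n|\ge c_K b_n$ for large $n$. Hence the $n$th term is bounded by $C_K B_n/b_n^2$, which is summable.

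For the zeros, the idea is to use monotonicity on the real line. Away from the poles, the derivative satisfies
\[
        f'(x)=\alpha+\sum_n \frac{B_n}{(x+b_n)^2}>0,
\]
since $f$ is nonconstant. So $f$ is strictly increasing on each interval $(-b_{n+1},-b_n)$ and on $(-b_1,\infty)$. Near each simple pole the residue is $-B_n<0$, so $f\to+\infty$ as $x\uparrow -b_n$ and $f\to-\infty$ as $x\downarrow-b_n$. Strict monotonicity then gives exactly one simple zero in each gap between consecutive poles, so the zeros interlace with the poles. On $(-b_1,\infty)$ there is at most one zero; it lies in $(-b_1,0]$ because $f(0)=\gamma\ge0$.

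To show that there are no nonreal zeros, I would use the identity
\[
        \Im f(z)=\Im z\Bigl(\alpha+\sum_n \frac{B_n}{|z+b_n|^2}\Bigr).
\]
The bracket is strictly positive for a nonconstant $f$, so $\Im f(z)\ne0$ whenever $\Im z\ne0$. Consequently all zeros are real, and we have already shown they are simple.

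I expect the main obstacle to be the first step: rigorously passing from meromorphy of $f$ to discreteness of $\sigma$, and ruling out higher-order poles. I would handle it by citing Chebotarev's theorem directly, rather than redoing the Stieltjes inversion argument in full.
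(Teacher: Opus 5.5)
Your proposal is correct and follows essentially the same route as the paper. The partial-fraction form comes from Chebotarev's meromorphic Nevanlinna representation, with no poles on $[0,\infty)$ because $f$ is holomorphic off $(-\infty,0]$ and finite at $0^+$. The interlacing follows from $f'(x)=\alpha+\sum_n B_n/(x+b_n)^2>0$, together with $f\to+\infty$ from the left and $f\to-\infty$ from the right at each pole. The only variation is how you exclude nonreal zeros: you use the explicit identity $\Im f(z)=\Im z\,\bigl(\alpha+\sum_n B_n/|z+b_n|^2\bigr)$, whereas the paper applies the minimum principle to the nonnegative harmonic function $\Im f$.

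In your optional Stieltjes-inversion sketch the sign and normalisation are slightly off. Since $\Im f(-t+i\varepsilon)>0$ and $\pi^{-1}\Im f(-t+i\varepsilon)\,\D t\to t\,\sigma(\D t)$ vaguely, the correct sign is $\pm$ rather than $\mp$ and the factor is $1/(\pi t)$. This step is not load-bearing, and your conclusion that the poles are simple with negative residue is right.
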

    
    \begin{proof}[Proof of Lemma~\ref{lemma:Nevanlinna}]
    A complete Bernstein function extends to a Nevanlinna--Pick function and is
    nonnegative on $(0,\infty)$; see
    \cite[Theorem~7.2]{Schilling-Song-Vondracek-2026}. Since $f$ is meromorphic, the meromorphic
    Nevanlinna representation
    \cite[Theorem~1, p.~197]{chebotarev1949routh},
    available also as     \cite[Theorem 3]{Kuznetsov-2010}, applies. Because $f$ has no
    poles on $[0,\infty)$, the coefficients corresponding to poles on the
    nonnegative half-line vanish. Reindexing the remaining poles as $-b_n$, with
    $b_n>0$, gives \eqref{eq:meromorphic-CBF-series}.
    
    Since $f$ is a nonconstant Nevanlinna--Pick function,
    \[
            \Im f(z)>0,
            \qquad \text{and}
            \qquad\Im z>0.
    \]
    Indeed, otherwise the nonnegative harmonic function $\Im f$ would attain
    its minimum in the upper half-plane and would therefore be constant. By
    conjugation, $f$ has no zeros in the lower half-plane either. Hence all
    zeros of $f$ are real.
    
    Away from its poles,
    \[
            f'(x)
            =
            \alpha+\sum_{n\ge1}\frac{B_n}{(x+b_n)^2}>0.
    \]
    Thus $f$ is strictly increasing on every component of its real domain.
    Moreover,
    \[
            \lim_{x\uparrow-b_n}f(x)=+\infty,
            \qquad\text{and}
            \qquad
            \lim_{x\downarrow-b_n}f(x)=-\infty.
    \]
    Consequently, there is exactly one simple zero between consecutive
    poles, with the usual possible endpoint zero at $0$. Thus the zeros and
    poles interlace.
    \end{proof}
    
    The proof of Proposition~\ref{prop:U-expression} now follows by applying the last result to $z\mapsto z/\phi(z)$, which is again a complete Bernstein function. Dividing the resulting representation by $z$ gives the required representation of $1/\phi(z)$.
    
    \begin{proof}[Proof of Proposition~\ref{prop:U-expression}]
    Put
    \[
            \phi^*(z):=\frac{z}{\phi(z)}.
    \]
    If $\phi^*$ is constant, then $\phi(z)=dz$ for some $d>0$, so $\xi$ is a
    pure drift and the conclusion follows directly from
    $U(\D x)=d^{-1}\D x$. We therefore assume that $\phi^*$ is nonconstant.
    Since $\phi$ is a nonzero complete Bernstein function,
    \cite[Proposition~8.1]{Schilling-Song-Vondracek-2026} implies that
    $\phi^*$ is also a complete Bernstein function. Since $\phi$ is meromorphic,
    so is $\phi^*$.
    
    Applying Lemma~\ref{lemma:Nevanlinna}, we obtain
    \begin{equation}\label{eq:phi-star-series}
            \frac{z}{\phi(z)}
            =
            \gamma'
            +\alpha'z
            +\sum_{n\ge1}B_n'
            \left(
            \frac1{b_n'}-\frac1{z+b_n'}
            \right).
    \end{equation}
    The poles of $\phi^*$ are precisely the nonzero zeros of $\phi$. Writing $s_n:=-b_n'<0$, Lemma~\ref{lemma:Nevanlinna} shows that these zeros are real and simple and interlace with the
    poles of $\phi$.
    
    Dividing \eqref{eq:phi-star-series} by $z$ gives
    \begin{equation}\label{eq:reciprocal-series-proof}
            \frac1{\phi(z)}
            =
            \frac{\gamma'}{z}
            +\alpha'
            +\sum_{n\ge1}
            \frac{B_n'}{b_n'}\frac1{z+b_n'}.
    \end{equation}
    It remains to identify the constants on the right-hand side in terms of $\phi$. By
    \cite[Remark~3.3(iv)]{Schilling-Song-Vondracek-2026}, applied to the Bernstein function $\phi^*$,
    \[
            \gamma'
            =
            \lim_{z\downarrow0}\frac{z}{\phi(z)}
            =
            \frac1{\phi'(0)}
            =
            \frac1\mu,
            \qquad
            \text{and}
            \qquad
            \alpha'
            =
            \lim_{z\to+\infty}\frac1{\phi(z)}
            =
            \eta.
    \]
    Furthermore, since $s_n$ is a simple zero of $\phi$, matching residues gives
    \[
            \frac{B_n'}{b_n'}
            =
            \operatorname*{Res}_{z=s_n}\frac1{\phi(z)}
            =
            \frac1{\phi'(s_n)}.
    \]
    Substituting the identified constants, we obtain
    \begin{equation}\label{eq:meromorphic-reciprocal-proof}
            \frac1{\phi(z)}
            =
            \eta+\frac1{\mu z}
            +\sum_{n\ge1}
            \frac1{\phi'(s_n)(z-s_n)}.
    \end{equation}
    Note that the last expression is the Laplace transform of the measure
    \[
            \widetilde U(\D x)
            :=
            \eta\delta_0(\D x)
            +
            \left[
            \frac1\mu
            +\sum_{n\ge1}
            \frac{e^{s_nx}}{\phi'(s_n)}
            \right]\D x,
    \]
    since, by Tonelli's theorem, for every
    $z>0$,
    \begin{align*}
            \int_{[0,\infty)}e^{-zx}\widetilde U(\D x)
            &=
            \eta+\frac1{\mu z}
            +\sum_{n\ge1}
            \frac1{\phi'(s_n)}
            \int_0^\infty e^{-(z-s_n)x}\D x\\
            &=
            \eta+\frac1{\mu z}
            +\sum_{n\ge1}
            \frac1{\phi'(s_n)(z-s_n)}
            =
            \frac1{\phi(z)}.
    \end{align*}
    The finiteness of the last expression also shows that $\widetilde U$ is
    locally finite. The potential measure $U$ has the same Laplace transform. Fix $z_0>0$. Then the finite measures
    \[
            e^{-z_0x}U(\D x)
            \qquad\text{and}\qquad
            e^{-z_0x}\widetilde U(\D x)
    \]
    have the same Laplace transform. By uniqueness of Laplace transforms,
    \cite[Proposition~1.2]{Schilling-Song-Vondracek-2026}, they are equal, and therefore $U=\widetilde U$, which proves \eqref{eq:meromorphic-potential-prelim}.
    
    Finally, local uniform convergence of the density series on $x>0$ follows
    from positivity and the estimate
    \[
            e^{-t x}
            \le
            \frac{C_{\delta,z}}{z+t},
            \qquad \text{for }x\ge\delta>0,\text{ and } t>0,
    \]
    where $        C_{\delta,z}:=\sup_{t>0}(z+t)e^{-\delta t}<\infty,$
    combined with the convergence of
    \eqref{eq:meromorphic-reciprocal-proof}.
    \end{proof}

     \section{Further properties of the zero set for positive beta}
    \label{app:extmeromorphic-potential-proof}
   We give the detailed zero analysis used in
Theorem~\ref{thm:positive-zero-results} and prove the global representation
in Theorem~\ref{thm:positive-global-results}. The finite-strip estimate in
of Theorem~\ref{thm:positive-strip-results} is proved in Section~\ref{sec:positive-beta-proofs}.

We write $\phi_\beta$ as in
\eqref{eq:phi-beta-meromorphic-results} in terms of Gamma functions and,
with the change of variables $w=z+\beta+2$, obtain,
for $        w\in\Cb$, and
$        \beta>0$,
\begin{equation}\label{eq:phi-beta_ref}
       f_\beta(w)
        :=
        f(\beta,w)
    :=
        \frac{\phi_\beta(w-\beta-2)}{\Gamma(\beta+1)}
        =
        \frac{\Gamma(\beta+2)}{\Gamma(2\beta+3)}
        -
        \frac{\Gamma(w)}{\Gamma(w+\beta+1)}
        =:
        c(\beta)-g(\beta,w).
\end{equation}
Clearly, for every fixed $\beta>0$, the function $f_\beta$ is meromorphic
on $\Cb$; see Subsection \ref{subsec:MerbetaPos}. Next, we give information for the zeros and poles  of \(f_\beta\) when \(\beta\) is a positive integer.

\subsection{Integer values of \texorpdfstring{$\beta$}{beta}}
    \begin{proposition}
    [Zeros and poles for positive integer $\beta$]
    \label{appendix:zeros:integer_beta}
        Let \(\beta>0\)  be an integer. Then 
        \begin{enumeratei} 
            \item \label{item:1}the function \(f_\beta\) has exactly \(\beta+1\) simple zeros and \(\beta+1\) simple poles, the latter being exactly \(\{-\beta,-\beta+1,\dots, 0 \}\);
            
            \item \label{item:2}if \(\beta\) is even, then \(f_\beta\) has a unique real zero at \(w=\beta+2\), and \(\beta\) nonreal zeros, which appear in conjugate pairs;
            
            \item \label{item:3}if \(\beta\) is odd, then \(w=\beta+2\) and \(w=-2(\beta+1)\) are the real zeroes of \(f_\beta\), and its remaining \(\beta-1\) zeros are nonreal and appear in conjugate pairs;
            
            \item \label{item:4} every zero \(w_0\) of \(f_\beta\) satisfies 
             \[
                \Re(w_0)\in\left[-2(\beta+1),\beta+2\right],
                \qquad\text{and}\qquad
                \left|\Im(w_0)\right|<\left(3\beta+4\right)/{2}.
        \]    
        \end{enumeratei}
        \end{proposition}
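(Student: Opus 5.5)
The plan rests on the observation that for a positive integer $\beta=n$ the function $g(n,w)=\Gamma(w)/\Gamma(w+n+1)$ is rational. By the functional equation of $\Gamma$,
\[
        g(n,w)=\frac{1}{P_n(w)},\qquad P_n(w):=\prod_{k=0}^{n}(w+k),
        \qquad
        c(n)=\frac{(n+1)!}{(2n+2)!}=\frac1{K_n},\qquad K_n:=\prod_{j=n+2}^{2n+2}j .
\]
Hence $f_n(w)=\bigl(P_n(w)-K_n\bigr)/\bigl(K_nP_n(w)\bigr)$, and every claim reduces to the polynomial equation $P_n(w)=K_n$ of degree $n+1$.

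For (i), the poles are the zeros of $P_n$, namely $\{-n,\dots,0\}$. These are simple, and $P_n-K_n$ does not vanish there. There are exactly $n+1$ zeros counted with multiplicity. To show they are simple, note that a multiple zero would be a point where $P_n'(w)=0$ and $P_n(w)=K_n$. Since $P_n$ has $n+1$ distinct real roots, Rolle's theorem places its $n$ critical points in $(-n,0)$, and so these are all of them. For $w\in[-n,0]$ each factor satisfies $|w+k|\le n$, so
\[
        |P_n(w)|\le n^{n+1}<(n+2)^{n+1}\le K_n .
\]
Thus no critical point has critical value $K_n$, and all zeros are simple. The same estimate also shows that $f_n$ has no real zero in $[-n,0]$.

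For (ii) and (iii), note first that $P_n(n+2)=(n+2)\cdots(2n+2)=K_n$, so $w=n+2$ is a zero. On $(0,\infty)$ the polynomial $P_n$ is positive and strictly increasing, so this is the only zero there. On $(-\infty,-n)$ we have $P_n(w)=(-1)^{n+1}|P_n(w)|$, and $|P_n|$ is strictly decreasing in $w$ there.
\begin{itemize}
\item If $n$ is even, $P_n<0$ on $(-\infty,-n)$, so there is no zero on the left.
\item If $n$ is odd, the symmetry $P_n(-n-w)=(-1)^{n+1}P_n(w)=P_n(w)$ maps $n+2$ to $-2n-2$. Monotonicity makes this the unique zero on $(-\infty,-n)$.
\end{itemize}
The remaining zeros are nonreal, and they come in conjugate pairs because $P_n$ has real coefficients. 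This gives $n$ nonreal zeros for $n$ even and $n-1$ for $n$ odd. Under $z=w-n-2$ the real zero $-2n-2$ becomes $-3n-4$, which matches Theorem~\ref{thm:positive-zero-results}.

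For (iv), fix a zero $w_0$, so that $|P_n(w_0)|=K_n$.
\begin{itemize}
\item If $\Re w_0>n+2$, then $|w_0+k|\ge \Re w_0+k>n+2+k$ for each $k$, so $|P_n(w_0)|>K_n$, a contradiction.
\item If $\Re w_0<-2n-2$, then $|w_0+k|>2n+2-k$ for each $k$. The product of the right-hand sides over $k=0,\dots,n$ is again $K_n$, so $|P_n(w_0)|>K_n$, a contradiction.
\item For the imaginary part, $|w_0+k|\ge|\Im w_0|$ for each $k$, so $|\Im w_0|^{n+1}\le K_n$. By AM--GM applied to the $n+1$ distinct integers $n+2,\dots,2n+2$, whose mean is $(3n+4)/2$, we get $K_n<\bigl((3n+4)/2\bigr)^{n+1}$. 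The inequality is strict because $n\ge1$ and the integers are not all equal. Hence $|\Im w_0|<(3n+4)/2$.
\end{itemize}

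The only step needing real care is the simplicity of the zeros. The key is to locate all critical points of $P_n$ in $[-n,0]$, where $|P_n|$ stays below $K_n$; the rest is elementary.
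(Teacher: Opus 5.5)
Your proof is correct, and most of it follows the paper. You share its reduction to the polynomial equation $Q(w)=Q(\beta+2)$, where your $P_n$ is the paper's $Q$. You also share the reflection $Q(-\beta-w)=(-1)^{\beta+1}Q(w)$, which produces the zero $-2(\beta+1)$ for odd $\beta$, the factor-by-factor modulus comparisons for the real-part bounds, and the AM--GM bound for $\Im w_0$.

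The one step you do differently is the simplicity of the zeros. The paper writes $f_\beta'(w_0)=Q(\beta+2)^{-1}\sum_{j=0}^{\beta}(w_0+j)^{-1}$ at a zero and checks that this sum cannot vanish. For nonreal $w_0$ its imaginary part is $-\Im w_0\sum_j|w_0+j|^{-2}\neq0$. At the two real zeros $\beta+2$ and $-2(\beta+1)$, all terms share a sign. The real case therefore needs the real zeros to be located first.

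You instead use Rolle's theorem to place all $n$ critical points of $P_n$ in $(-n,0)$. There $|P_n|\le n^{n+1}<K_n$, so no critical value equals $K_n$. This treats real and nonreal zeros uniformly and independently of (ii)--(iii). The same bound also gives, for free, that there are no real zeros in $[-n,0]$.

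The two arguments express the same underlying fact: the zeros of $Q'/Q=\sum_j(w+j)^{-1}$ are real and confined to $(-\beta,0)$. Your route isolates this fact more cleanly.

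Your treatment of the imaginary part, using the non-strict $|\Im w_0|^{n+1}\le K_n$ and then strict AM--GM, is also valid. It avoids having to justify the strict first inequality that the paper asserts.
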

        
        \begin{proof}[Proof of Proposition \ref{appendix:zeros:integer_beta}] The poles are clear from \eqref{eq:phi-beta_ref} which thanks to the recurrence relation of the Gamma function reduces to 
\[
        f_\beta(w)
        =
        \frac{Q(w)-Q(\beta+2)}{Q(w)Q(\beta+2)},
        \qquad\text{with}\qquad
        Q(w):=\prod_{j=0}^{\beta}(w+j).
\]
            The zeros of \(f_\beta\),
counted with multiplicity, are the \(\beta+1\)  roots of the polynomial in $w$ given by \(Q(w)-Q(\beta+2)\). Next, note that every nonreal zero $w_0$ is simple. 
Indeed, since
\begin{equation}\label{eq:sum-derivative}
f'_\beta(w_0)=\frac{Q'(w_0)}{Q(w_0)^{2}}=
        \frac{1}{Q(\beta+2)}\sum_{j=0}^\beta\frac{1}{w_0+j},
        \quad 
        \text{and}
        \quad
        \Im\lbrb{\sum_{j=0}^\beta\frac{1}{w+j}}=\sum_{j=0}^\beta\frac{-\Im(w)}{|w+j|^2},
\end{equation}
        we have that $f_
        \beta'(w_0)\neq0$ for $\Im w_0\neq 0$.
        
        Clearly, \(w=\beta+2\) is always a zero of \(f_\beta\). Also, \(Q(\beta+2)=(-1)^{\beta+1}Q(-2(\beta+1))\), so \(-2(\beta+1)\) is a zero of \(f_\beta\) iff \(\beta\) is odd.
        Moreover, a direct comparison of the factors, shows that
        \[
        \left| Q(w)\right|<Q(\beta+2),
        \qquad\text{for }
        w\in\left(-2(\beta+1),\beta+2\right),
\]
and
\[
        \left| Q(w)\right|>Q(\beta+2),
     \qquad\text{for }
        w\in\Rb\setminus[-2(\beta+1),\beta+2].
\] Therefore, the real zeros are precisely those given in  \textit{(\ref{item:2})} and \textit{(\ref{item:3})}.
At both points, \(w=\beta+2\) and \(w=-2(\beta+1)\), the terms in
\eqref{eq:sum-derivative} have the same sign, so they are at most simple zeros.  This concludes the proofs of
\textit{(\ref{item:1})}--\textit{(\ref{item:3})}.

        Regarding \textit{(\ref{item:4})}, for a zero $w_0$ of $f_\beta$, that is \( Q(w_0)= Q(\beta+2)\), we have that
        \[   \prod_{j=0}^{\beta}(\beta+2+j)=   
        Q(\beta+2)=
      \lvert Q(w_0)\rvert
      =
              \prod_{j=0}^{\beta}
        \sqrt{(\re(w_0)+j)^2+(\Im(w_0))^2}
        \ge 
\prod_{j=0}^{\beta}
\lvert\re(w_0)+j\rvert,
        \]
        which necessitates \(\Re(w_0)\in[-2(\beta+1),\beta+2]\).
        Moreover, since 
\[
        \lvert\Im(w_0)\rvert^{\beta+1}
        <
        \prod_{j=0}^{\beta}
        \sqrt{(\re(w_0)+j)^2+(\Im(w_0))^2}
        =
        \prod_{j=0}^{\beta}(\beta+2+j),
\]
 the AM--GM inequality gives
\[
      \lvert\Im(w_0)\rvert
        <
        \left[
        \prod_{j=0}^{\beta}(\beta+2+j)
        \right]^{1/(\beta+1)}\leq
        \frac{1}{\beta+1}
        \sum_{j=0}^{\beta}(\beta+2+j)
        =
        \frac{3\beta+4}{2},
\]
which proves \textit{(\ref{item:4})}.
\end{proof}
\subsection{Real zeros for noninteger \texorpdfstring{$\beta$}{beta}}
            Next, we consider the case where $\beta>0$ is noninteger. Introduce the following notation for integer and fractional part
\[
        n_\beta:=\lfloor\beta\rfloor,
        \qquad\text{and}\qquad
        \theta_\beta:=\beta-n_\beta\in(0,1),
\]
so that $\beta=n_\beta+\theta_\beta$. By
\eqref{eq:phi-beta_ref}, the zeros of $f_\beta$ are precisely the
solutions of
\begin{equation}\label{eq:zeroesfg}
        g(\beta,w)=c(\beta)>0.
\end{equation}
\begin{lemma}[Real zeros for noninteger $\beta$]
\label{lem:nozero}
Let $\beta>0$ be noninteger. Then $w=\beta+2$ is a simple zero of
$f_\beta$, and $f_\beta$ has no other zeros in the real interval
$[-n_\beta-1,\infty)$.
\end{lemma}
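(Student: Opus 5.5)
We study the real function $w\mapsto g(\beta,w)=\Gamma(w)/\Gamma(w+\beta+1)$ on the interval $[-n_\beta-1,\infty)$, using \eqref{eq:zeroesfg}: the zeros of $f_\beta$ there are exactly the solutions of $g(\beta,w)=c(\beta)$. That $w=\beta+2$ is a zero is immediate from $c(\beta)=g(\beta,\beta+2)$. The plan is to split the interval at $0$ and at the poles $0,-1,\dots,-n_\beta$ of $\Gamma(w)$, and show that in each piece either $g$ has the wrong sign, or $|g|$ stays on the wrong side of $c(\beta)$, or $g$ is strictly monotone.

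\emph{Step 1: $w>0$.} Here $g(\beta,w)=B(w,\beta+1)/\Gamma(\beta+1)$. This is strictly decreasing in $w$, since $\int_0^1 s^{w-1}(1-s)^\beta\D s$ has integrand decreasing in $w$. Hence $w=\beta+2$ is the unique solution on $(0,\infty)$. Simplicity follows from $\partial_w g(\beta,\beta+2)<0$, or equivalently $f_\beta'(\beta+2)=-\partial_w g\neq0$. The value of this derivative is $g\cdot(\psi(w)-\psi(w+\beta+1))<0$.

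\emph{Step 2: $w\in[-n_\beta-1,0]$, away from poles.} On $(-k-1,-k)$ with $0\le k\le n_\beta$, the sign of $\Gamma(w)$ is $(-1)^{k+1}$. Moreover $w+\beta+1>-k-1+n_\beta+\theta_\beta+1\geq\theta_\beta>0$, so $\Gamma(w+\beta+1)>0$ and $g$ has sign $(-1)^{k+1}$. On the intervals with $k$ even, $g<0<c(\beta)$ and there are no zeros. At the endpoint $w=-n_\beta-1$, and at every nonpositive integer, $\Gamma(w)$ has a pole, so $g$ has a pole there (these points are poles of $f_\beta$, not zeros).

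For $k$ odd, $g>0$, and we must show $g(\beta,w)\neq c(\beta)$. The natural approach is the reflection formula. Write $\Gamma(w)=\pi/(\sin(\pi w)\Gamma(1-w))$, so that
\[
g(\beta,w)=\frac{\pi}{\sin(\pi w)\,\Gamma(1-w)\Gamma(w+\beta+1)}.
\]
Put $u=-w\in(k,k+1)$. The product $\Gamma(1+u)\Gamma(\beta+1-u)$ equals $\Gamma(\beta+2)B(1+u,\beta+1-u)$, and $\beta+1-u>0$. We need to show $|g|>c(\beta)=\Gamma(\beta+2)/\Gamma(2\beta+3)$, i.e.
\[
\Gamma(2\beta+3)\,\pi>|\sin(\pi u)|\,\Gamma(\beta+2)^2B(1+u,\beta+1-u).
\]
A cleaner route may be to compare with the real zero structure of the integer case in Proposition~\ref{appendix:zeros:integer_beta}, which is unavailable here. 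So I would instead bound directly. Use $|\sin\pi u|\le1$ and
\[
\Gamma(1+u)\Gamma(\beta+1-u)\le \Gamma(1)\Gamma(\beta+1)\cdot\text{(log-convexity bound)}.
\]
By log-convexity of $\Gamma$, the function $u\mapsto\Gamma(1+u)\Gamma(\beta+1-u)$ is maximal at the endpoints of $[0,\beta]$, where it is at most about $\Gamma(\beta+1)\cdot\Gamma(1)$, up to the $\Gamma(\theta_\beta)$ blow-up near $u=\beta$. Then we need $\pi\Gamma(2\beta+3)>\Gamma(\beta+2)\Gamma(\beta+1)\cdot(\ldots)$. Since $\Gamma(2\beta+3)/(\Gamma(\beta+2)\Gamma(\beta+1))=(\beta+1)/B(\beta+2,\beta+1)$ is large, this comfortably holds.

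\emph{Main obstacle.} The delicate part is the last interval $k=n_\beta$ when $n_\beta$ is odd. There $u\in(n_\beta,n_\beta+1)$ and $\beta+1-u\in(\theta_\beta,1+\theta_\beta)$, so $\Gamma(\beta+1-u)$ can be as large as $\Gamma(\theta_\beta)\approx1/\theta_\beta$. This must be compensated by the factor $|\sin(\pi u)|$. I would use the estimate $|\sin\pi u|\,\Gamma(\beta+1-u)\le\pi\,\Gamma(\beta+1-u)\,\mathrm{dist}(u,\Nb)$ and treat it via $\Gamma(x)x=\Gamma(1+x)\le1$ for $x\in(0,1]$. Concretely, near $u\to\beta+1^-$ we have $g\to\infty$, but $u<k+1\le\beta+1$ keeps us away from that; the resulting bound should then reduce to an elementary inequality between gamma ratios.

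Should this crude bounding fail for small $\beta$, the fallback is to prove that $\log g$ is convex on each interval with $k$ odd (its second derivative is $\psi'(w)-\psi'(w+\beta+1)>0$). Then compare the minimum of $g$ with $c(\beta)$ by evaluating at the half-integer point $w=-k-\tfrac12$.
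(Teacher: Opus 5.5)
Your Step~1 and the sign analysis in Step~2 are correct and agree with the paper. Your intervals $(-k-1,-k)$ with $k$ even are the paper's odd $j=k+1$. For odd $k<n_\beta$, your log-convexity bound also works: there $(k,k+1)\subset(0,\beta)$, and you only need $B(\beta+2,\beta+1)<\pi$.

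The gap is the last interval $k=n_\beta$ when $n_\beta$ is odd. You correctly identify this as the crux, but you do not prove it. You only assert that the bound ``should then reduce to an elementary inequality'', and your fallback does not work. Strict log-convexity of $g$ on $(-k-1,-k)$ is true. However, evaluating $g$ at $w=-k-\tfrac12$ gives an \emph{upper} bound on $\min g$, whereas you need a lower bound, and nothing places the minimiser at the midpoint.

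The sketch also contains some slips:
\begin{itemize}
\item $\Gamma(2\beta+3)/(\Gamma(\beta+2)\Gamma(\beta+1))=1/B(\beta+2,\beta+1)$, not $(\beta+1)/B(\beta+2,\beta+1)$. It tends to $2$ as $\beta\downarrow0$, so it is not ``large''; the factor $\pi$ is what saves you.
\item $\Gamma(\beta+1-u)$ equals $1$ at $u=\beta$. It approaches $\Gamma(\theta_\beta)$ only as $u\uparrow n_\beta+1$.
\item As $u\uparrow\beta+1$ one has $g\to0$, not $\infty$.
\end{itemize}

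Your route can be completed as follows. For $u\in(n_\beta,n_\beta+1)$, the bound $|\sin\pi u|\le\pi(n_\beta+1-u)\le\pi(\beta+1-u)$ gives $|\sin\pi u|\,\Gamma(\beta+1-u)\le\pi\,\Gamma(\beta+2-u)<2\pi$. Also $\Gamma(1+u)<(n_\beta+1)!$. It therefore suffices that $\Gamma(2\beta+3)/\Gamma(\beta+2)>2\,(n_\beta+1)!$. This follows from the monotonicity of $x\mapsto\Gamma(2x+3)/\Gamma(x+2)$ together with $\binom{2n_\beta+2}{n_\beta+1}>2$.

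The paper sidesteps this case split entirely. It uses only the recurrence, never the reflection formula. With $\theta_w=w+j$, it writes $\Gamma(w)>\Gamma(\theta_w)/j!$ and $\Gamma(w+\beta+1)\le(n_\beta+2-j)!\,\Gamma(\theta_w+\theta_\beta)$. The two small-argument Gamma factors are then compared directly through $\Gamma(\theta_w)>\Gamma(\theta_w+\theta_\beta)$. This gives $g>1/(j!(n_\beta+2-j)!)>1/(n_\beta+2)!>c(\beta)$ uniformly for all even $j$, including $j=n_\beta+1$. There is no need to balance a vanishing sine against a large Gamma value.
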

\begin{proof}[Proof of Lemma \ref{lem:nozero}]
Recall that we defined in \eqref{eq:phi-beta_ref},
\[
f_\beta(w)
:=
\frac{\phi_\beta(w-\beta-2)}{\Gamma(\beta+1)}
=
\frac{\Gamma(\beta+2)}{\Gamma(2\beta+3)}
-
\frac{\Gamma(w)}{\Gamma(w+\beta+1)}
=
c(\beta)-g(\beta,w).
\]
We start with analysing the positive zeros: differentiation in
\eqref{eq:phi-beta-intro} gives
\[
f_\beta'(w)
=
-\frac{1}{\Gamma(\beta+1)}
\int_0^1
\log(s)s^{w-1}(1-s)^\beta\D s
>0,
\qquad
\text{for $w>0$},
\]
so $f_\beta$ is strictly increasing on $(0,\infty)$. Moreover,
\[
f_\beta(\beta+2)=0,
\qquad\text{and}\qquad
f_\beta'(\beta+2)
=
\frac{\mu_\beta}{\Gamma(\beta+1)}>0,
\]
where $\mu_\beta$ is defined in \eqref{eq:mu-tau-results}. Therefore,
$\beta+2$ is a simple zero and the only positive zero of $f_\beta$.

It remains to consider the interval $[-n_\beta-1,0]$. Its integer
points are poles of $f_\beta$. For a noninteger
$w\in[-n_\beta-1,0]$, set
\[
j=j(w):=-\lfloor w\rfloor\in\{1,\ldots,n_\beta+1\},
\qquad\text{and}\qquad
\theta_w:=w-\lfloor w\rfloor=w+j\in(0,1).
\]
Then
$w+\beta+1
>0$,
so $\Gamma(w+\beta+1)>0$. Moreover,
\[
\Gamma(w)=(-1)^j\lvert\Gamma(w)\rvert.
\]
Therefore, if $j$ is odd, then $g(\beta,w)=\Gamma(w)/\Gamma(w+\beta +1)<0$, and thus
$f_\beta(w)>0$.

We are left with even $j$ such that
$j\leq n_\beta+1$. We show via direct estimates that
\begin{equation}\label{eq:inequality-lemmaB2}
g(\beta,w)>
\frac{1}{j!(n_\beta+2-j)!}
>
c(\beta),
\end{equation}
which entails $f_\beta(w)=c(\beta)-g(\beta,w)<0$. Our approach is to estimate the
Gamma functions in $g$ by pushing their arguments onto the positive
real line via the recurrence equation $\Gamma(z+1)=z\Gamma(z)$. We have
\[
\Gamma(w)w(w+1)\cdots(\theta_w-1)
=
\Gamma(\theta_w),
\qquad
\text{giving }
\Gamma(w)>{\Gamma(\theta_w)}/{j!},
\]
and similarly
\[
\begin{split}
\Gamma(w+\beta+1)=(w+\beta)(w+\beta-1)\dots(\theta_w+\theta_\beta)\Gamma(\theta_w+\theta_\beta)\le
(n_\beta+2-j)!\Gamma(\theta_w+\theta_\beta).
\end{split}
\]
Note that by basic properties of the Gamma function
\[
\Gamma(\theta_w)>\Gamma(\theta_w+\theta_\beta).
\]
Indeed, if $\theta_w+\theta_\beta\leq1$, this follows from its strict
decrease on $(0,1]$, and otherwise
$
\Gamma(\theta_w)>1\geq\Gamma(\theta_w+\theta_\beta).
$
Consequently, combining our estimates for $\Gamma(w)$ 
and $\Gamma(w+\beta+1)$,
\[
g(\beta,w)>
\frac{1}{j!(n_\beta+2-j)!},
\]
which gives the first inequality in \eqref{eq:inequality-lemmaB2}.
Finally, since the Gamma function is increasing on $[2,\infty)$, and again using $\Gamma(z+1)=z\Gamma(z)$,
\[
c(\beta)
=
\frac{\Gamma(\beta+2)}
{\Gamma(2\beta+3)}
<
\frac{\Gamma(n_\beta+\theta_\beta+2)}
{\Gamma(2n_\beta+\theta_\beta+3)}
<
\frac{1}{(n_\beta+2)!}
<
\frac{1}{j!(n_\beta+2-j)!}.
\]
This proves also the second inequality of \eqref{eq:inequality-lemmaB2}, and hence $f_\beta$ has no zeros
in $[-n_\beta-1,0]$, which concludes the proof.
\end{proof}
        
We have just characterised the real zeros of $f_\beta$ on
$[-n_\beta-1,\infty)$. We now partition the remaining interval as
\[
(-\infty,-n_\beta-1)
=
\bigcup_{n\geq0}
[-n-n_\beta-2,-n-n_\beta-1).
\]
The poles of $\Gamma(w+\beta+1)$ further split each of these unit
intervals: for each $n\geq0$, let
\begin{equation}\label{eq:intervals}
\mathcal I_n
:=
\lbrb{-n-n_\beta-2,-n-n_\beta-1-\theta_\beta},
\,\,\text{and}\,\,
\mathcal J_n
:=
\lbrb{-n-n_\beta-1-\theta_\beta,-n-n_\beta-1}.
\end{equation}
Thus, $\mathcal I_n$ and $\mathcal J_n$ split the unit interval
$\lbrb{-n-n_\beta-2,-n-n_\beta-1}$ at the point $-n-\beta-1$.
The sign of $g(\beta,\cdot)$ on these intervals depends on the parity
of $n_\beta$, so we consider the even and odd cases separately.

\begin{proposition}\label{prop:realzeroesEven}
Assume that $n_\beta$ is even. Then $f_\beta$ has exactly one simple
real zero in each $\mathcal J_n$ for $n\geq0$. Together with the simple
zero $w=\beta+2$, these are all the real zeros of $f_\beta$.
\end{proposition}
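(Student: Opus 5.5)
By Lemma~\ref{lem:nozero} it suffices to study $w<-n_\beta-1$, that is, the intervals $\mathcal I_n$ and $\mathcal J_n$ of \eqref{eq:intervals}. I would show that $f_\beta=c(\beta)-g(\beta,\cdot)$ has no zero on any $\mathcal I_n$ and exactly one simple zero on each $\mathcal J_n$. Both parts rest on reading off the signs of $\Gamma(w)$ and $\Gamma(w+\beta+1)$ there. Write $w=-k+\theta_w$ with $k=n+n_\beta+2$ and $\theta_w\in(0,1)$, so that $\Gamma(w)$ has sign $(-1)^k=(-1)^n$, using that $n_\beta$ is even.

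\emph{No zeros on $\mathcal I_n$.} On $\mathcal I_n$ we have $w+\beta+1\in(-n-1,-n)$, so $\Gamma(w+\beta+1)$ has sign $(-1)^{n+1}$. Hence $g<0<c(\beta)$, so $f_\beta>0$ on $\mathcal I_n$.

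\emph{Exactly one zero on $\mathcal J_n$.} On $\mathcal J_n$ we have $w+\beta+1\in(-n,-n+1)$, so $\Gamma(w+\beta+1)$ has sign $(-1)^n$ for $n\ge1$, and is positive for $n=0$. Thus $g>0$ on $\mathcal J_n$. At the left endpoint $-n-\beta-1$, the factor $1/\Gamma(w+\beta+1)$ vanishes, so $g\to0$ and $f_\beta\to c(\beta)>0$. At the right endpoint $-n-n_\beta-1$, which is a pole of $\Gamma(w)$, we have $g\to+\infty$, so $f_\beta\to-\infty$. By the intermediate value theorem there is at least one zero in $\mathcal J_n$.

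For uniqueness and simplicity, I would show that $g$ is strictly monotone on $\mathcal J_n$. Since $g>0$ there,
\[
(\log g)'(w)=\psi(w)-\psi(w+\beta+1),
\]
and it suffices to prove this is strictly positive on $\mathcal J_n$. Then $g$ is strictly increasing, $f_\beta$ is strictly decreasing, and the zero is unique with $f_\beta'\neq0$ there, hence simple. To establish the sign I would use the reflection formula
\[
\psi(1-x)-\psi(x)=\pi\cot(\pi x)
\]
to move both arguments to the positive axis, or alternatively use the series
\[
\psi(b)-\psi(a)=\sum_{m\ge0}\Bigl(\frac1{a+m}-\frac1{b+m}\Bigr)
\]
with $a=w$ and $b=w+\beta+1$. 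The terms with $m\ge k$ have $a+m,b+m>0$ and are positive. The finitely many terms where $a+m<0<b+m$ or both are negative need grouping. The expected key fact is that on $\mathcal J_n$ the argument $w$ sits just left of the pole $-n-n_\beta-1$ of $\psi$, where $\psi(w)\to+\infty$. Meanwhile $w+\beta+1$ lies in $(-n,-n+1)$, and the shift $\beta+1$ has fractional part $\theta_\beta$, so the two arguments occupy different relative positions within their unit cells. I would compare $\psi(-k+\theta_w)$ with $\psi(-n+\theta_w+\theta_\beta-1)$ via reflection. Both reduce to $\psi$ of positive numbers plus $\pi\cot$ terms, and on $\mathcal J_n$ one has $\theta_w>1-\theta_\beta$. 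The cot comparison should then favour $\psi(w)$, since $\cot$ is decreasing, while the $\psi$-of-positive parts should be controlled by monotonicity of $\psi$.

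This monotonicity step is the main obstacle. If the clean reflection comparison fails for small $n$, my fallback is to avoid derivatives altogether: argue that $f_\beta$ cannot have two zeros in $\mathcal J_n$ by a counting argument, matching the total number of real zeros against the nonreal-zero count $n_\beta$ from Theorem~\ref{thm:positive-zero-results} via an argument-principle count on large contours. This works because the zeros of $\Gamma(w)/\Gamma(w+\beta+1)-c$ in a large disk are asymptotically one per unit interval, by Stirling.
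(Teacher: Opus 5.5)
Your proposal is correct and is essentially the paper's proof: the same sign argument excludes $\mathcal I_n$, the same endpoint limits give existence on $\mathcal J_n$, and your reflection comparison is exactly the paper's computation, since with $t=1-\theta_w\in(0,\theta_\beta)$ one gets $\psi(w)-\psi(w+\beta+1)=\psi(n+n_\beta+2+t)-\psi(n+1+t-\theta_\beta)+\pi\cot(\pi(\theta_\beta-t))+\pi\cot(\pi t)>0$ for every $n\ge0$, including $n=0$. The hedging is therefore unnecessary, and the counting fallback should be dropped anyway: the count of exactly $n_\beta$ nonreal zeros in Theorem~\ref{thm:positive-zero-results} is derived in Proposition~\ref{prop:complexZero} from the present proposition, so that route would be circular.
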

\begin{proof}
By Lemma~\ref{lem:nozero}, $w=\beta+2$ is the only zero of
$f_\beta$ on $[-n_\beta-1,\infty)$, and it is simple. It remains to
consider $w<-n_\beta-1$.

We first show that the equation $g(\beta,w)=c(\beta)>0$ cannot have a solution on $\mathcal I_n$ by a simple sign argument. On $\mathcal J_n$, the idea is to use the reflection formula
\begin{equation}
    \label{eq:reflection-gamma}
\Gamma(z)\Gamma(1-z)=\frac{\pi}{\sin(\pi z)}
\end{equation}
to transfer the arguments of the Gamma functions in $g(\beta,\cdot)$ to the positive half-line, where the expressions are more tractable. In particular, we can find the behaviour at the endpoints of $\mathcal J_n$, and further obtain monotonicity, which proves a unique solution of $g(\beta,w)=c(\beta)$. We now provide the details.

For $w\in\mathcal I_n$,
\[
\lfloor w\rfloor=-n-n_\beta-2,
\qquad\text{and}\qquad
\lfloor w+\beta+1\rfloor=-n-1.
\]
Since $\operatorname{sgn}\Gamma(x)=(-1)^{\lfloor x\rfloor}$ for negative
noninteger $x$,
\[
\operatorname{sgn}g(\beta,w)
=
(-1)^{\lfloor w\rfloor-\lfloor w+\beta+1\rfloor}
=
(-1)^{n_\beta+1}
=
-1.
\]
Thus, $g(\beta,w)<0<c(\beta)$ on $\mathcal I_n$, so
\eqref{eq:zeroesfg} has no solution there. We now consider $\mathcal J_n$. For $w\in\mathcal J_n$, write
\[
w=-n-n_\beta-1-t,
\qquad\text{with}\qquad
t\in(0,\theta_\beta).
\]
Applying \eqref{eq:reflection-gamma} to both Gamma functions in
$g(\beta,w)$ gives
\begin{equation}\label{eq:grealEvenJ}
g(\beta,w)
=
\frac{\Gamma(-n-n_\beta-1-t)}
{\Gamma(-n-t+\theta_\beta)}
\\
=
(-1)^{n_\beta}
\frac{\sin\lbrb{\pi(\theta_\beta-t)}}{\sin(\pi t)}
\frac{\Gamma(1+n+t-\theta_\beta)}
{\Gamma(n+n_\beta+2+t)},
\end{equation}
and since $n_\beta$ is even, we have that $g$ is strictly positive on $\mathcal J_n$ and
\[
\lim_{t\to0+}g(\beta,-n-n_\beta-1-t)
=+\infty,
\qquad\text{and}\qquad
\lim_{t\to\theta_\beta-}
g(\beta,-n-n_\beta-1-t)=0.
\]Therefore, \eqref{eq:zeroesfg} has at least one solution in $\mathcal J_n$.
We now prove that this solution is unique, which we prove is unique by considering the logarithmic derivative in $w$ of $g$: the reflection formula
for the digamma function $\psi(x):=(\log \Gamma(x))'=\Gamma'(x)/\Gamma(x)$ reads
\[
\psi(1-x)-\psi(x)=\pi\cot(\pi x),
\]
so the $\log$-derivative satisfies
\[
\frac{g'_w(\beta,w)}{g(\beta,w)}
=
\psi(n+n_\beta+2+t)-\psi(1+n+t-\theta_\beta)
+\pi\cot\lbrb{\pi(\theta_\beta-t)}
+\pi\cot(\pi t).
\]
Note that  $\psi$ is strictly increasing on $(0,\infty)$ and
\[
n+n_\beta+2+t>1+n+t-\theta_\beta>0,
\]
which gives positivity of the digamma difference. Also, by a standard trigonometric identity
\[
\cot u+\cot v
=
\frac{\sin(u+v)}{\sin u\sin v}>0,
\qquad
\text{for }u,v>0,\quad\text{and}\quad
u+v<\pi,
\]
so the logarithmic derivative is positive. Since $g>0$ on $\mathcal J_n$,
it follows that $g'_w(\beta,w)>0$ there. Hence,
\eqref{eq:zeroesfg} has exactly one solution in $\mathcal J_n$, and
at this solution,
\[
f_\beta'(w)=-g'_w(\beta,w)<0,
\]
so the zero is simple. Finally, the integer endpoints are poles of $f_\beta$, while
\[
g(\beta,-n-\beta-1)=0,
\qquad\text{and therefore}\qquad
f_\beta(-n-\beta-1)=c(\beta)>0,
\]
so there are no further real zeros.
\end{proof}
\begin{proposition}\label{prop:realzeroesOdd}
Assume that $n_\beta$ is odd. Then $f_\beta$ has exactly one simple
real zero in each $\mathcal I_n$ for $n\geq n_\beta+2$. In each interval
$\mathcal I_n$ for $0\leq n<n_\beta+2$, there is at least one real zero.
\end{proposition}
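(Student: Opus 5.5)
On each $\mathcal I_n$ I would first settle existence by a sign and boundary-value argument, mirroring the proof of Proposition~\ref{prop:realzeroesEven} but with the roles of $\mathcal I_n$ and $\mathcal J_n$ exchanged. For $w\in\mathcal I_n$ we have $\lfloor w\rfloor=-n-n_\beta-2$ and $\lfloor w+\beta+1\rfloor=-n-1$. Hence
\[
\operatorname{sgn}g(\beta,w)=(-1)^{n_\beta+1}=+1,
\]
since $n_\beta$ is odd. On $\mathcal J_n$ the same computation gives $g<0<c(\beta)$, so no zeros lie there; this is not needed for the statement but clarifies the picture.

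On $\mathcal I_n$ I would write $w=-N+s$ with $N:=n+n_\beta+2$ and $s\in(0,1-\theta_\beta)$. As $s\downarrow0$, $\Gamma(w)$ hits a pole while $\Gamma(w+\beta+1)$ stays finite and nonzero, so $g\to+\infty$. As $s\uparrow1-\theta_\beta$, $w+\beta+1\to-n$ is a pole of the denominator, so $g\to0$. By continuity $g$ takes the value $c(\beta)>0$ somewhere in $\mathcal I_n$. This gives at least one zero of $f_\beta$ in every $\mathcal I_n$, in particular for $0\le n<n_\beta+2$.

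For uniqueness when $n\ge n_\beta+2$, I would apply the reflection formula \eqref{eq:reflection-gamma} to both Gamma factors. This gives
\[
g(\beta,w)=\frac{\sin\bigl(\pi(s+\theta_\beta)\bigr)}{\sin(\pi s)}\,(\pm1)\,\frac{\Gamma(N-s-\beta)}{\Gamma(N+1-s)}.
\]
Here the sign is $+$ on $\mathcal I_n$, and all Gamma arguments are positive. The logarithmic derivative in $w$ is then
\[
\frac{g'_w}{g}=\bigl[\psi(N+1-s)-\psi(N-s-\beta)\bigr]+\pi\bigl[\cot\bigl(\pi(s+\theta_\beta)\bigr)-\cot(\pi s)\bigr].
\]
The bracket with cotangents equals $-\sin(\pi\theta_\beta)/\bigl(\sin(\pi s)\sin(\pi(s+\theta_\beta))\bigr)$, which is negative. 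The digamma difference is positive but small: since $\psi'(x)\le 1/x+1/x^2$, it is at most about $(\beta+1)/(N-s-\beta)$, which for $n\ge n_\beta+2$ is roughly $(\beta+1)/(n+1)$. My plan is to show that $g'_w<0$ at every solution of $g=c(\beta)$ in $\mathcal I_n$, so every crossing is transversal and in the same direction, hence unique and simple.

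To do this I would use the equation $g=c(\beta)$ itself. Since $c(\beta)<1/(n_\beta+2)!$ (as in Lemma~\ref{lem:nozero}) is small, the identity forces $\sin(\pi(s+\theta_\beta))/\sin(\pi s)$ to be bounded above in terms of $c(\beta)\,\Gamma(N+1-s)/\Gamma(N-s-\beta)$. This should make the negative trigonometric term, of size at least $\pi\sin(\pi\theta_\beta)/\bigl(\sin(\pi s)\sin(\pi(s+\theta_\beta))\bigr)$, dominate the digamma term. The threshold $n\ge n_\beta+2$ should come out of comparing $c(\beta)(N-\beta)^{\beta+1}$-type quantities with the Gamma ratio.

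The main obstacle is exactly this quantitative comparison. It must hold uniformly in $\theta_\beta\in(0,1)$, including $\theta_\beta\to0$ where $\sin(\pi\theta_\beta)$ degenerates, so a crude bound will not do. If the direct estimate at the solution fails, I would instead show that $\log g$ is strictly convex or monotone in $s$ on $\mathcal I_n$ by differentiating once more. That would exclude two crossings of the level $\log c(\beta)$, given the boundary behaviour $+\infty\to-\infty$.
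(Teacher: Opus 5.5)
Your existence argument is correct and coincides with the paper's. On $\mathcal I_n$ the function $g(\beta,\cdot)$ is positive because $n_\beta$ is odd, tends to $+\infty$ at the pole of $\Gamma(w)$, and tends to $0$ at the pole of $\Gamma(w+\beta+1)$. Your reflected formula and logarithmic derivative are also right, and the criterion ``$g'_w<0$ at every solution of $g=c(\beta)$'' would give uniqueness and simplicity. But that criterion is the whole content of the claim for $n\geq n_\beta+2$, and you do not establish it.

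Your fallback is false. Write $(\log g)'=D(s)-T(s)$ with $D(s)=\psi(N+1-s)-\psi(N-s-\beta)$ and $T(s)=\pi\sin(\pi\theta_\beta)/(\sin(\pi s)\sin(\pi(s+\theta_\beta)))$. Since $T\to+\infty$ at both ends of $\mathcal I_n$, $\log g$ tends to $-\infty$ with slope tending to $-\infty$ as $s\uparrow1-\theta_\beta$, so it is not convex on $\mathcal I_n$. Nor is $g$ monotone. At $s=(1-\theta_\beta)/2$ one has $T=2\pi\tan(\pi\theta_\beta/2)$, whereas $D\geq\psi(x+1)-\psi(x)=1/x>1/(n+2)$ with $x=N-s-\beta$. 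Hence for $\theta_\beta$ small (depending on $n$), $g$ is increasing at the midpoint and has an interior local minimum and maximum. So no shape argument on the whole of $\mathcal I_n$ can work, and the equation $g=c(\beta)$ must be used to localise the solutions. Your heuristic for this ($c(\beta)$ is small, so the sine ratio is bounded from above) points the wrong way.

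The missing idea is that $c(\beta)$ dominates the Gamma factor. The map $x\mapsto\Gamma(x)/\Gamma(x+\beta+1)$ is strictly decreasing on $(0,\infty)$, and for $n\geq n_\beta+2$ one has $x=N-s-\beta=n+2-\theta_\beta-s>n+1>\beta+2$. Therefore $\Gamma(N-s-\beta)/\Gamma(N+1-s)<\Gamma(\beta+2)/\Gamma(2\beta+3)=c(\beta)$ throughout $\mathcal I_n$. At any solution the sine ratio $R:=\sin(\pi(s+\theta_\beta))/\sin(\pi s)$ must then exceed $1$, that is, $s<(1-\theta_\beta)/2$. So all solutions lie in the half of $\mathcal I_n$ adjacent to the pole of $\Gamma(w)$, and this is exactly where $n\geq n_\beta+2$ is used.

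On that half, $D$ increases and $T$ decreases, so $\log g$ is strictly convex there. At the midpoint $R=1$, so $g<c(\beta)$, while $g\to+\infty$ as $s\downarrow0$. A strictly convex function with these boundary properties crosses the level $\log c(\beta)$ exactly once, with negative slope, which gives uniqueness and simplicity. This is the paper's proof, written in the reflected variable $1-\theta_\beta-s$.

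If you prefer your pointwise route, you still need a quantitative lower bound on $R$ from the same comparison. Indeed $T=\pi(R+R^{-1}-2\cos(\pi\theta_\beta))/\sin(\pi\theta_\beta)$, and $R>1$ alone only yields $T>2\pi\tan(\pi\theta_\beta/2)$, which can be smaller than $D$.
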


\begin{proof}
The sign argument used in the proof of Proposition
\ref{prop:realzeroesEven} gives $g<0$ on
$\mathcal J_n$, since $n_\beta$ is odd, so there are no zeros there. For $\mathcal I_n$, we need the asymptotic behaviour at its limits points similar to \eqref{eq:grealEvenJ}: for $w\in\mathcal I_n$, write
\[w=-n-\beta-1-s,\qquad
\text{with $s\in(0,1-\theta_\beta)$}.\]
Applying
\eqref{eq:reflection-gamma} to both Gamma functions in $g(\beta,w)$ gives
\begin{equation}\label{eq:grealOddI}
g(\beta,w)
=
\frac{\Gamma(-n-n_\beta-1-\theta_\beta-s)}
{\Gamma(-n-s)}
=
(-1)^{n_\beta+1}
\frac{\sin(\pi s)}
{\sin\lbrb{\pi(\theta_\beta+s)}}
\frac{\Gamma(n+1+s)}
{\Gamma(n+n_\beta+2+\theta_\beta+s)}.
\end{equation}
Since $n_\beta$ is odd, we have that $g$ is strictly positive on
$\mathcal I_n$ and
\[
\lim_{s\to0+}g(\beta,-n-\beta-1-s)
=0,
\qquad\text{and}\qquad
\lim_{s\to(1-\theta_\beta)-}
g(\beta,-n-\beta-1-s)=+\infty.
\]
Therefore, since $c(\beta)>0$, equation \eqref{eq:zeroesfg} has at least
one solution in each interval $\mathcal I_n$.

With $w=-n-n_\beta-1-\theta_\beta-s$, $s\in(0,1-\theta_\beta)$, the logarithmic derivative of $g$ is given by
\begin{equation}
\frac{g'_w(\beta,w)}{g(\beta,w)}
=
\psi(n+n_\beta+2+\theta_\beta+s)-\psi(n+1+s)
-\pi\cot(\pi s)+\pi\cot\lbrb{\pi(\theta_\beta+s)}.
\end{equation}
Denote
\[
h_1(s)
:=
\psi(n+n_\beta+2+\theta_\beta+s)-\psi(n+1+s),
\quad
\text{and}
\quad
h_2(s)
:=
\pi\cot(\pi s)-\pi\cot\lbrb{\pi(\theta_\beta+s)}.
\]
Regarding $h_1$, using the classical series representation of the
digamma function \cite[Equation (5.7.6)]{NIST-Handbook},
\[
\psi(z) = -\gammaEuler +\sum_{k=0}^\infty\left(
\frac{1}{k+1}-\frac{1}{k+z}\right),
\]
and differentiating term by term, for every $m\geq0$ we obtain
\[
(-1)^m h_1^{(m)}(s)
=
m!\sum_{k\geq0}
\left(
\frac{1}{(k+n+1+s)^{m+1}}
-
\frac{1}{(k+n+n_\beta+2+\theta_\beta+s)^{m+1}}
\right)
>0.
\]
Therefore, $h_1$ is completely monotone, and in particular positive,
decreasing, and convex. The function $h_2$ is also positive and convex, and is symmetric with respect to $(1-\theta_\beta)/2$. The logarithmic derivative is a sum of contributions with opposite signs.

Consider $n\geq n_\beta+2$. Since $x\mapsto\Gamma(x)/\Gamma(x+\beta+1)$ is strictly decreasing for $x>0$ and $n+1+s>\beta+2$, for $s\in(0,1-\theta_\beta)$ we have
\[
c(\beta)
=
\frac{\Gamma(\beta+2)}{\Gamma(2\beta+3)}
>
\frac{\Gamma(1+n+s)}
{\Gamma(n+n_\beta+2+\theta_\beta+s)}.
\]
Then, for the equality \eqref{eq:zeroesfg} to hold, it is required that $\sin(\pi s)>\sin\lbrb{\pi(\theta_\beta+s)}$, i.e. every zero of $f_\beta$ on $\mathcal I_n$ satisfies $s>(1-\theta_\beta)/2$.

Put $a_\beta:=(1-\theta_\beta)/2$ and $G_n(s):=g(\beta,-n-\beta-1-s)$. At $s=a_\beta$, the quotient of the sine functions is equal to one, and therefore
\[
G_n(a_\beta)
=
\frac{\Gamma(1+n+a_\beta)}
{\Gamma(n+n_\beta+2+\theta_\beta+a_\beta)}
<
c(\beta).
\]
On $(a_\beta,1-\theta_\beta)$, the function $h_1$ decreases, whereas $h_2$ increases. Therefore, $h_2-h_1$ is strictly increasing. Moreover,
\[
\frac{\D}{\D s}\log G_n(s)=h_2(s)-h_1(s),
\]
so the logarithmic derivative of $G_n$ is strictly increasing on this interval. Since $h_2(s)-h_1(s)\to+\infty$ as $s\to(1-\theta_\beta)-$, consequently, $G_n$ either increases throughout the interval or first decreases and then increases. Since
\[
G_n(a_\beta)<c(\beta),
\qquad\text{and}\qquad
\lim_{s\to(1-\theta_\beta)-}G_n(s)=+\infty,
\]
the equation $G_n(s)=c(\beta)$ has exactly one solution. This solution lies on the strictly increasing section of $G_n$. Thus, at the corresponding value of $w$,
\[
g'_w(\beta,w)=-G_n'(s)<0,
\qquad\text{and therefore}\qquad
f_\beta'(w)=-g'_w(\beta,w)>0,
\]
so the zero is simple.

Thus, $\mathcal I_n$ contains exactly one simple zero for $n\geq n_\beta+2$, while the endpoint behaviour gives at least one zero for $0\leq n<n_\beta+2$. There are no solutions in $\mathcal J_n$, which concludes the proof.
\end{proof}
\subsection{Nonreal zeros and global properties}
\begin{lemma}[Uniform estimates on square contours]
\label{lem:g}
Fix an integer $n\geq0$, $\gamma\in(0,1)$, and
$0\le\ell\le r\le1$ such that $n+1-r>0$,
\[
        \alpha:=n+1-r>0.
\]
For an integer $N\geq1$, set $R_N:=N-\gamma$, and let $D_N$ be the
positively oriented boundary of the square
\[
        \left\{
        w\in\Cb:
        |\Re w|\leq R_N,\quad
        |\Im w|\leq R_N
        \right\}.
\]
Then
\begin{equation}\label{eq:g-square}
        \sup_{\ell\leq\eta\leq r}
        \sup_{w\in D_N}
        |g(n+\eta,w)|
        =
        \bo(
        R_N^{-\alpha})=
        \bo(N^{-\alpha}),
        \qquad \text{as $N\to\infty$.}
\end{equation}
Consequently, for all sufficiently large $N$ and every
$\eta\in[\ell,r]$, the function $f_{n+\eta}$ does not have zeros nor
poles on $D_N$, and
\begin{equation}\label{eq:reciprocal-square}
        \sup_{\ell\leq\eta\leq r}
        \sup_{w\in D_N}
        \left|
        \frac1{f_{n+\eta}(w)}
        -
        \frac1{c(n+\eta)}
        \right|=
        \bo(N^{-\alpha}).
\end{equation}
\end{lemma}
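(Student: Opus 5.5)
We bound $g(\beta,w)=\Gamma(w)/\Gamma(w+\beta+1)$ on $D_N$ uniformly in $\beta=n+\eta$, $\eta\in[\ell,r]$, treating the right half $\Re w\ge -A$ (for a fixed large $A$) and the left half $\Re w<-A$ separately. Note first that $\beta+1=n+\eta+1\ge n+1\ge\alpha$, since $\eta\ge0\ge -r$, so in regions where $g$ behaves like $w^{-\beta-1}$ the bound $O(N^{-\alpha})$ is automatic.

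\emph{Right half and horizontal sides.} On the part of $D_N$ with $\Re w\ge -A$, or more generally where $|\arg w|\le\pi-\delta$ for a fixed $\delta>0$, I would invoke the uniform Stirling-type ratio estimate $\Gamma(w)/\Gamma(w+b)=w^{-b}(1+O(|w|^{-1}))$. This holds uniformly for $b$ in the compact set $[n+\ell+1,n+r+1]$ and $|w|\to\infty$ in such a sector. It gives $|g|\le C|w|^{-\beta-1}\le C' N^{-n-1}\le C'N^{-\alpha}$ there. The same estimate covers the top and bottom sides $|\Im w|=R_N$ with $\Re w\le0$: there $|\Im w|=R_N$ and $\arg w$ stays away from $\pi$ except near the two left corners. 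For the corners I would simply use the left-side argument below.

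\emph{Left half.} For $\Re w<-A$, including the left vertical side $\Re w=-R_N=-N+\gamma$, I would apply the reflection formula \eqref{eq:reflection-gamma} to both Gamma functions, as in \eqref{eq:grealEvenJ}. This gives
\[
g(\beta,w)=\frac{\sin(\pi(w+\beta+1))}{\sin(\pi w)}\cdot\frac{\Gamma(-w-\beta)}{\Gamma(1-w)}.
\]
Since $-w$ lies in the right half-plane, the second factor is $O(|w|^{-\beta-1})$ by Stirling, uniformly in $\beta$. The sine ratio is bounded by $e^{\pi|\Im(\beta)|}\cdot O(1)=O(1)$ once $|\Im w|$ is large, because both sines grow like $e^{\pi|\Im w|}/2$. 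When $|\Im w|$ is bounded the sine ratio needs care. On the left side we have $\Re w=-N+\gamma$, so $\sin(\pi w)$ is bounded away from zero, as $\gamma\in(0,1)$ keeps us a fixed distance from the integers. The numerator is trivially bounded. Hence the sine ratio is $O(1)$ uniformly, and $|g|=O(N^{-\beta-1})=O(N^{-\alpha})$.

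The real issue, and the reason the exponent is only $\alpha=n+1-r$ rather than $\beta+1$, is that one must not lose uniformity. I suspect the authors bound the Gamma ratio crudely, e.g.\ by $|w|^{-(n+1-r)}$ via monotonicity in $\beta$. Since $\beta+1\ge\alpha$, my bound implies theirs anyway. The main obstacle is making the Stirling ratio estimate genuinely uniform on whole sides of the square, including the corners and near the negative real axis where the sector condition fails. I would handle this by splitting each side at $|\Im w|=1$, combining the reflection formula with $|\sin(\pi w)|\ge c_\gamma>0$ on $\Re w=-N+\gamma$.

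\emph{Consequence.} Since $c(\beta)=\Gamma(\beta+2)/\Gamma(2\beta+3)$ is continuous and positive on $[n+\ell,n+r]$, it is bounded below by some $c_0>0$. For $N$ large, $|g|\le c_0/2$ on $D_N$, so $f=c-g\ne0$ there. Also $g$ is finite on $D_N$, so there are no poles: the poles lie at the nonpositive integers, while $\Re w=\pm(N-\gamma)$ is non-integer and the horizontal sides are off the real axis. Finally
\[
\frac1f-\frac1c=\frac{g}{c(c-g)},
\]
whose modulus is at most $2|g|/c_0^2=O(N^{-\alpha})$. This proves \eqref{eq:reciprocal-square}.
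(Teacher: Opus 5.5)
Your proof is correct. On the left side $\Re w=-R_N$ and in the final step it coincides with the paper's; the rest of the contour is handled with a different tool.

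\emph{Where you agree with the paper.} On the left side, both you and the paper use the reflection formula to write $g(\beta,w)$ as a sine ratio times $\Gamma(-w-\beta)/\Gamma(1-w)$. Both bound the sine ratio using the fact that $\Re w=-N+\gamma$ stays a fixed distance from the integers. Both then bound the Gamma ratio, whose argument now has real part $R_N-\beta\ge R_N/2$. The paper does the sine bound in one line via $|\sin z|^2=\sin^2\Re z+\sinh^2\Im z$, obtaining the constant $1/\sin(\pi\gamma)$. You split into bounded and large $|\Im w|$ instead, which is equivalent. The deduction of the reciprocal estimate, and of the absence of zeros and poles on $D_N$, is the same in both.

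\emph{Where you differ: the right side and the horizontal sides.} You treat these in one stroke with the sector-uniform asymptotic $\Gamma(w)/\Gamma(w+b)=w^{-b}\bigl(1+O(|w|^{-1})\bigr)$, uniform for $b$ in a compact set and $|\arg w|\le\pi-\delta$. The paper avoids sector asymptotics altogether.
\begin{itemize}
\item On the right side it uses the beta-integral bound $|g(\beta,R_N+iy)|\le\Gamma(R_N)/\Gamma(R_N+\beta+1)\le R_N^{-(n+1)}$.
\item On the horizontal sides it applies Stirling only in a bounded vertical strip. It then moves $\Re w$ into $[0,1]$ with $\Gamma(z+1)=z\Gamma(z)$ and controls the product of shift factors. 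Each factor has modulus at most one when $\Re w\ge0$, and the total product is at most $(1+(n+2)/R_N)^{R_N+1}$ when $\Re w<0$.
\end{itemize}
Your route is shorter, but it rests on a cited uniform asymptotic whose uniformity in $b$ deserves a line of justification. It follows from the Stirling series for $\log\Gamma$, which is uniform in such sectors, applied at $w$ and $w+b$. The paper's route is more elementary and self-contained, and it gives explicit constants.

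\emph{Minor corrections.}
\begin{itemize}
\item The sector condition does not fail at the left corners: there $\arg w=\pm3\pi/4$. With $\delta=\pi/4$, your estimate covers both horizontal sides entirely, so reflection is needed only on the left side itself. No splitting at $\Re w=-A$ or at $|\Im w|=1$ is required.
\item Your guess about the origin of the exponent $\alpha$ is off. The paper also proves $O(R_N^{-(n+1)})$ on every side, exactly as you do, and states the weaker exponent $\alpha=n+1-r$ only for convenience.
\item The factor $e^{\pi|\Im\beta|}$ in your sine bound is superfluous, since $\beta$ is real.
\end{itemize}
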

\begin{proof}[Proof of Lemma \ref{lem:g}]
We first estimate $g$ on the right side of $D_N$. The beta-integral
representation gives, uniformly for $\eta\in[\ell,r]$ and $y\in\Rb$,
\[
\begin{aligned}
|g(n+\eta,R_N+iy)|
&=
\frac{1}{\Gamma(n+\eta+1)}
\left|
\int_0^1 t^{R_N+iy-1}(1-t)^{n+\eta}\D t
\right|
\\
&\leq
\frac{1}{\Gamma(n+\eta+1)}
\int_0^1 t^{R_N-1}(1-t)^{n+\eta}\D t
=
\frac{\Gamma(R_N)}{\Gamma(R_N+n+\eta+1)}.
\end{aligned}
\]
For $N\geq2$, we have
$R_N+n+1>2$, and since the Gamma function is increasing on $[2,\infty)$,
\[
\frac{\Gamma(R_N)}{\Gamma(R_N+n+\eta+1)}
\leq
\frac{\Gamma(R_N)}{\Gamma(R_N+n+1)}
=
\prod_{k=0}^{n}\frac1{R_N+k}
\leq R_N^{-(n+1)}.
\]
Consequently,
\[
\sup_{\ell\leq\eta\leq r}
\sup_{|y|\leq R_N}
|g(n+\eta,R_N+iy)|
\leq R_N^{-(n+1)}
\leq R_N^{-\alpha},
\]
which gives the required bound on the right vertical side of $D_N$.

We next consider the upper side. By the uniform version of Stirling's
formula, see \cite[(5.11.9)]{NIST-Handbook},
\[
        |\Gamma(x+it)|
        \sim
        \sqrt{2\pi}|t|^{x-\frac12}e^{-\pi|t|/2},
        \qquad
        \text{uniformly for $x$ in bounded real intervals as
        $|t|\to\infty$.}
\]
Applying this formula for $x\in[0,n+3]$ provides
that there
exists a constant $C>0$, depending only on $n$, such
that, for all sufficiently large $N$,
\begin{equation}\label{eq:bounded-strip-ratio}
\left|
        \frac{\Gamma(v+iR_N)}
             {\Gamma(v+n+\eta+1+iR_N)}
        \right|
        \leq
        C R_N^{-(n+\eta+1)},
        \qquad\text{for $v\in[0,1]$ and $\eta\in[\ell,r]$}.
\end{equation}
We next obtain the bounds for $g(n+\eta,x+iR_N)$
for $x \in [-R_N,R_N]$ applying the Gamma recurrence formula, pushing the quotient until we can apply \eqref{eq:bounded-strip-ratio}: first, for $0\leq x\leq R_N$,
\[
g(n+\eta,x+iR_N)
=
\frac{\Gamma(\{x\}+iR_N)}
     {\Gamma(\{x\}+n+\eta+1+iR_N)}
\prod_{k=0}^{\lfloor x\rfloor-1}
\frac{\{x\}+k+iR_N}{\{x\}+n+\eta+1+k+iR_N}.
\]
Every factor in the last product has modulus at most $1$, so by \eqref{eq:bounded-strip-ratio}
\begin{equation}
\label{eq:g-upper-line}
|g(n+\eta,x+iR_N)|
        \leq C R_N^{-(n+\eta+1)},
        \qquad
        \text{for $0\leq x\leq R_N$}.
\end{equation}
The approach for negative values is similar: for $-R_N\leq x<0$, 
\[
g(n+\eta,x+iR_N)
=
\frac{\Gamma(1-\{-x\}+iR_N)}
     {\Gamma(1-\{-x\}+n+\eta+1+iR_N)}
\prod_{k=0}^{\lfloor-x\rfloor}
\left(
1+\frac{n+\eta+1}{x+k+iR_N}
\right).
\]
Since $\lfloor-x\rfloor+1\leq R_N+1$ and $n+\eta+1\leq n+2$,
\[
\prod_{k=0}^{\lfloor-x\rfloor}
\left|
1+\frac{n+\eta+1}{x+k+iR_N}
\right|
\leq
\left(1+\frac{n+2}{R_N}\right)^{R_N+1}
\leq C.
\]
Together with \eqref{eq:bounded-strip-ratio}, this yields
\[
        \sup_{\ell\leq\eta\leq r}
        \sup_{-R_N\leq x\leq R_N}
        |g(n+\eta,x+iR_N)|
        \leq
        C R_N^{-(n+\ell+1)}
        \leq C R_N^{-(n+1)}.
\]
Note that $g$ is symmetric, in the sense that
$\overline{g(\beta,w)}=g(\beta,\overline{w})$, so the last estimate transfers to the lower horizontal part of $D_N$ as well.

It remains to consider the left vertical side of $D_N$: for it, we use the reflection formula, so we are left with a positive vertical line, which we have already seen how to treat. The reflection formula \eqref{eq:reflection-gamma} gives
\begin{equation}\label{eq:g-reflection}
        g(n+\eta,w)
        =
        \frac{\sin(\pi(w+n+\eta+1))}{\sin(\pi w)}\,
        g(n+\eta,-w-n-\eta).
\end{equation}
Using that $|\sin z|^2 = \sin^2\Re z + \sinh^2 \Im z$, we calculate, for $w=-R_N+iy=-N+\gamma+iy$,
\[
\left|
\frac{\sin(\pi(w+n+\eta+1))}{\sin(\pi w)}
\right|^2
=
\frac{\sin^2\!\bigl(\pi(\gamma+\eta)\bigr)
      +\sinh^2(\pi y)}
     {\sin^2(\pi\gamma)+\sinh^2(\pi y)}
\leq
\frac{1+\sinh^2(\pi y)}{\sin^2(\pi\gamma)+\sinh^2(\pi y)}\leq
\frac1{\sin^2(\pi\gamma)}.
\]
Note also that $        -w-n-\eta
        =
        R_N-n-\eta-iy:=x_{N,\eta}-iy.$
For all sufficiently large $N$, uniformly in $\eta\in[\ell,r]$,
\[
        x_{N,\eta}
        \ge R_N-n-r
        \ge {R_N}/{2}.
\]
Repeating the beta-integral estimate from the beginning of the proof,
now with $x_{N,\eta}$ in place of $R_N$, gives, uniformly in
$y\in\Rb$,
\[
\begin{aligned}
|g(n+\eta,x_{N,\eta}-iy)|
&\le
\frac{\Gamma(x_{N,\eta})}
     {\Gamma(x_{N,\eta}+n+\eta+1)}
\le
x_{N,\eta}^{-(n+1)}
\le
2^{n+1}R_N^{-(n+1)}.
\end{aligned}
\]
Together with the bound on the sine-quotient, this gives
\[
\sup_{\ell\leq\eta\leq r}
        \sup_{|y|\leq R_N}
        |g(n+\eta,-R_N+iy)|
        \leq
        C R_N^{-(n+1)}
        \leq C R_N^{-\alpha},
\]
which concludes the uniform estimate \eqref{eq:g-square}.

As for the non-existence of poles and zeros of
$f_{n+\eta}$ on the contour, there are no poles on the vertical parts
of $D_N$, since their real parts are noninteger, and on the horizontal, since their values are nonreal. For the zeros, since $c$ is positive and continuous, 
\[
        c_*:=
        \inf_{\ell\leq\eta\leq r}c(n+\eta)>0,
\]
so  thanks to our asymptotic result \eqref{eq:g-square}, $g$ would be sufficiently small on $D_N$, in the sense that for sufficiently large $N$,
\[
        |g(n+\eta,w)|\leq{c_*}/{2},
        \qquad
        \text{for $\eta\in[\ell,r]$ and $w\in D_N$},
\]
so $|f_{n+\eta}(w)|\geq c(n+\eta)-|g(n+\eta,w)|>0$. Finally,
\[
\left|
\frac1{f_{n+\eta}(w)}
-
\frac1{c(n+\eta)}
\right|
=
\frac{|g(n+\eta,w)|}
     {c(n+\eta)|f_{n+\eta}(w)|}
\leq
\frac{2}{c_*^2}|g(n+\eta,w)|,
\]
which proves \eqref{eq:reciprocal-square}.
\end{proof}

\begin{proposition}[Nonreal and exceptional zeros]
\label{prop:complexZero}
Let $\beta>0$ be noninteger and let $      \beta=n_\beta+\theta_\beta$ with $       n_\beta=\lfloor\beta\rfloor$.

If $n_\beta$ is even, then $f_\beta$ has exactly $n_\beta$
nonreal zeros, counted with multiplicity. They occur in conjugate pairs.

Suppose that $n_\beta$ is odd. Let $q_\beta$ be the number of zeros
of $f_\beta$ in the upper half-plane, counted with multiplicity, and,
for $0\leq k\leq n_\beta+1$, let $M_k(\beta)$ be the total
multiplicity of the real zeros of $f_\beta$ in $\mathcal I_k$. Then
each $M_k(\beta)$ is odd and
\begin{equation}\label{eq:odd-zero-count}
        2q_\beta
        +
        \sum_{k=0}^{n_\beta+1}
        \left[M_k(\beta)-1\right]
        =
        n_\beta+1.
\end{equation}
In particular, $f_\beta$ has at most $n_\beta+1$ nonreal zeros,
counted with multiplicity.

Consequently, for every noninteger $\beta>0$, only finitely many zeros
of $f_\beta$ are nonreal or multiple. All the remaining zeros are real
and simple.
\end{proposition}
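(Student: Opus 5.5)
The approach is an argument-principle count on the square contours $D_N$ of Lemma~\ref{lem:g}. We compare zeros and poles of $f_\beta$ inside $D_N$, and then pin down the count by continuity in $\beta$ from the integer endpoints, where Proposition~\ref{appendix:zeros:integer_beta} gives everything explicitly.

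\emph{Step 1: counting.} By \eqref{eq:reciprocal-square}, on $D_N$ we have $f_\beta=c(\beta)(1+\bo(N^{-\alpha}))$ uniformly for $\beta$ in a compact interval $[n+\ell,n+r]$. Hence the winding number of $f_\beta(D_N)$ around $0$ vanishes for large $N$, so inside $D_N$
\[
        \#\{\text{zeros}\}=\#\{\text{poles}\},
\]
counted with multiplicity. The poles of $f_\beta=c-\Gamma(w)/\Gamma(w+\beta+1)$ are simple and sit at the nonpositive integers $w=-k$ with $w+\beta+1\notin-\Nb_0$; for noninteger $\beta$ this is all of them. So inside $D_N$ there are exactly $N$ poles ($w=0,-1,\dots,-N+1$).

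On the zero side, the real zeros are described by Lemma~\ref{lem:nozero} and Propositions~\ref{prop:realzeroesEven}, \ref{prop:realzeroesOdd}. Inside $D_N$, with $R_N=N-\gamma$ and $\gamma$ chosen so the left side avoids the relevant intervals, the intervals $\mathcal J_n$ (even case) or $\mathcal I_n$ (odd case) number roughly $N-n_\beta-1$ or $N-n_\beta-2$. There is one more zero at $\beta+2$.

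\emph{Step 2: even $n_\beta$.} Each $\mathcal J_n$ contains exactly one simple zero, and $\beta+2$ is simple. Counting the $\mathcal J_n$ with $-n-n_\beta-1>-R_N$ gives $N-n_\beta-1$ real zeros plus $1$. This leaves $N-(N-n_\beta)=n_\beta$ nonreal zeros, independent of $N$, and they come in conjugate pairs. I will need to check the exact bookkeeping at the left edge, choosing $\gamma$ relative to $\theta_\beta$ so that no interval straddles $D_N$.

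\emph{Step 3: odd $n_\beta$.} Each $\mathcal I_k$ with $k\ge n_\beta+2$ carries exactly one simple zero, and $\mathcal I_k$ for $k\le n_\beta+1$ carries $M_k(\beta)\ge1$ zeros. The same balance then gives
\[
        1+\sum_{k\le n_\beta+1}M_k+\#\{k\ge n_\beta+2\}+2q_\beta=N.
\]
Rearranging yields \eqref{eq:odd-zero-count} once the count of intervals is matched with $N$.

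To show each $M_k$ is odd, I use the endpoint behaviour in the proof of Proposition~\ref{prop:realzeroesOdd}: on $\mathcal I_k$, $f_\beta=c-g$ goes from $c>0$ (as $g\to0$) to $-\infty$. A real-analytic function changing sign between the endpoints has an odd total multiplicity of zeros.

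Since every term of \eqref{eq:odd-zero-count} is nonnegative, $2q_\beta\le n_\beta+1$, which gives the bound on nonreal zeros. The final statement also follows: the exceptional zeros are the nonreal ones together with the multiple ones in the finitely many $\mathcal I_k$, $k\le n_\beta+1$. In the even case all real zeros are simple.

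The main obstacle is the exact index bookkeeping near the left side of $D_N$. I must choose $\gamma\in(0,1)$ so that $-R_N$ lies strictly between a pole and the adjacent zero interval, and so that the estimate of Lemma~\ref{lem:g} applies with $\alpha>0$. If the naive count is off by one, I would fix it by checking against the integer case, letting $\theta_\beta\to0$ and comparing with Proposition~\ref{appendix:zeros:integer_beta}. The uniformity in $\eta$ in Lemma~\ref{lem:g} permits that continuity argument via Hurwitz's theorem.
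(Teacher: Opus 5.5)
Your proof is correct. It departs from the paper only in how it shows that the number of zeros equals the number of poles inside the square.

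\textbf{Where the routes differ.} The paper argues by homotopy in the parameter. It takes a contour on which $f_\zeta$ has no zeros or poles for every $\zeta$ in the segment $H$ joining $\beta$ to the nearest even integer $m_\beta$. The argument-principle integral $K_D(\zeta)$ is then continuous and integer-valued, hence constant. The paper reads off $K_D(m_\beta)=0$ from Proposition~\ref{appendix:zeros:integer_beta}; this is why Lemma~\ref{lem:g} is stated uniformly in $\eta$. You instead note that $|f_\beta-c(\beta)|=|g(\beta,\cdot)|\le c(\beta)/2$ on $D_N$ for large $N$. This places $f_\beta(D_N)$ in the half-plane $\Re w>0$, so the winding number about $0$ vanishes directly.

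\textbf{What each route buys.} Your route needs only Lemma~\ref{lem:g} with $\ell=r=\theta_\beta$. It makes the integer case, the uniformity in $\eta$, and your Hurwitz/``off by one'' fallback unnecessary. In fact the paper's own proof already contains your estimate when it shows $f_\zeta$ does not vanish on $D_{N+1}$, so the detour through $m_\beta$ adds no logical strength. Its only benefit is tying the count to the explicit integer configuration. The remaining steps coincide with the paper's: the real-zero count from Lemma~\ref{lem:nozero} and Propositions~\ref{prop:realzeroesEven}--\ref{prop:realzeroesOdd}, the parity of $M_k(\beta)$ from the sign change across $\mathcal I_k$, and the finiteness conclusion.

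\textbf{Closing the left-edge bookkeeping.} No off-by-one can arise if you choose $\gamma$ as follows.
\begin{itemize}
\item For even $n_\beta$, take $\gamma\in(0,1-\theta_\beta)$. Then $-R_N$ lies in the zero-free interval $\mathcal I_{N-n_\beta-2}$, and exactly $\mathcal J_0,\dots,\mathcal J_{N-n_\beta-2}$ are enclosed. With the $N$ enclosed poles this leaves $N-(N-n_\beta)=n_\beta$ nonreal zeros.
\item For odd $n_\beta$, take $\gamma\in(1-\theta_\beta,1)$. Then $-R_N$ lies in the zero-free $\mathcal J_{N-n_\beta-2}$, and exactly $\mathcal I_0,\dots,\mathcal I_{N-n_\beta-3}$ are enclosed. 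Your balance becomes $N=1+2q_\beta+\sum_{k\le n_\beta+1}M_k(\beta)+(N-2n_\beta-4)$, that is, $2q_\beta+\sum_{k\le n_\beta+1}M_k(\beta)=2n_\beta+3$, which is \eqref{eq:odd-zero-count}.
\end{itemize}
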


\begin{proof}[Proof of Proposition \ref{prop:complexZero}]
Fix the value \(\beta=n_\beta+\theta_\beta\). We have described the
solutions of $g(\beta,w)=c(\beta)$ for positive integer values of
$\beta$ in Proposition~\ref{appendix:zeros:integer_beta}; the case
$\beta=0$ follows directly from $f_0(w)=1/2-1/w$. Our idea now is to consider
$g(\zeta,w)$ as a function of $\zeta$ on the interval joining $\beta$
to the closest even integer
\[
H:=
\begin{cases}
[n_\beta,\beta], & \text{if } n_\beta \text{ is even},\\
[\beta,n_\beta+1], & \text{if } n_\beta \text{ is odd}.
\end{cases}
\]
The choice of prioritising even endpoints is motivated by the simpler structure of $f_{n}$ for even $n$, described in
Proposition \ref{appendix:zeros:integer_beta}. In particular,
$f_{n}$ has precisely one real zero.

Once we choose a common contour $D$ such that $f_\zeta$ has
neither zeros nor poles on $D$ for every $\zeta\in H$, the argument
principle gives
\[
K_D(\zeta)
:=
\frac{1}{2\pi i}
\int_D\frac{f_\zeta'(w)}{f_\zeta(w)}\D w,
\qquad
\text{for all $\zeta\in H$}.\]
The latter equals also the number of zeros minus the number of poles enclosed by $D$,
both counted with multiplicity. Since $K_D$ is a continuous
integer-valued function on $H$, it is constant. We know its exact value
from the integer case. For reference, let $m_\beta$ be this closest even
integer: $m_\beta=n_\beta$ when \(n_\beta\) is even and
$m_\beta=n_\beta+1$ when \(n_\beta\) is odd. Then we will compare
\[
f_\zeta, \text{ for }
\zeta \in H,
\qquad
\text{with}
\qquad f_{m_\beta}.
\]
The contours we will use are the squares from Lemma~\ref{lem:g}, chosen so that their left side does not go through a zero of $f$. In the notation of the invoked lemma, pick
\[
        \gamma:=1- \theta_\beta/ 2, 
        \qquad n=n_\beta,
        \qquad
        \text{and}
        \qquad
        (\ell,r) = (0,\theta_\beta)
        \text{ or } (\theta_\beta,1),
        \qquad
\]
where the last choice depends on the fact that 
$n_\beta$ is even or odd, respectively.
In both cases $\alpha=n_\beta+1-r>0$, and $n_\beta+\eta$ ranges precisely over $H$.
With this choice, for an integer $N\geq1$, the contour $D_{N+1}$ is exactly the positively oriented boundary of 
\[
\left\{
w\in\mathbb C:
|\Re w|\leq N+\frac{\theta_\beta}{2},
\quad
|\Im w|\leq N+\frac{\theta_\beta}{2}
\right\}.
\]
Again by Lemma~\ref{lem:g}, for all sufficiently large $N$ and every
$\zeta\in H$, the function $f_\zeta$ has neither zeros nor poles on
$D_{N+1}$. Increasing $N$ if needed, we may also assume that
$D_{N+1}$ encloses all zeros and poles of $f_{m_\beta}$ and the positive
real zero $\zeta+2$ of $f_\zeta$ for every $\zeta\in H$.

Consider now\[
K_{D_{N+1}}(\zeta)
=
\frac{1}{2\pi i}
\int_{D_{N+1}}
\frac{f_\zeta'(w)}{f_\zeta(w)}\D w.
\]
The function $K_{D_{N+1}}$ is continuous and integer-valued on $H$, and
therefore constant. Moreover, at the integer point $m_\beta\in H$, for $m_\beta>0$, by Proposition \ref{appendix:zeros:integer_beta}, we know that its value is 0. 
The same is true directly for $m_\beta=0$
from $f_0(w)=1/2-1/w$. Therefore,
\[
K_{D_{N+1}}(\zeta)=0,
\qquad
\text{for }
\zeta\in H.
\]
At noninteger $\beta$, the poles of $f_\beta$ are exactly the nonpositive integers in $D_{N+1}$ and simple. 
Therefore we should have $N+1$ zeros in $D_{N+1}$, counted with multiplicity.

We now count the number of nonreal zeros. First consider the case where $n_\beta$ is even. Then the left side of the countour $D_{N+1}$ passes through
\[
        -N-{\theta_\beta}/{2}
        \in(-N-\theta_\beta,-N)
        =
        \mathcal J_{N-n_\beta-1}.
\]
Proposition~\ref{prop:realzeroesEven} gives $N-n_\beta$ negative real
zeros inside $D_{N+1}$, together with one positive zero, so the
number of nonreal zeros inside $D_{N+1}$,
counted with multiplicity, is
\[
        (N+1)-(N-n_\beta)-1=n_\beta.
\]
This holds for all sufficiently large $N$, and since the contours $D_{N}$ cover $\mathbb C$, $f_\beta$ has exactly $n_\beta$
nonreal zeros.

Similarly, suppose that $n_\beta$ is odd. Proposition~\ref{prop:realzeroesOdd} gives at least $N-n_\beta-1$ negative real zeros inside $D_{N+1}$, together with one positive zero, so the number of nonreal zeros inside $D_{N+1}$, counted with multiplicity, is at most
\[
(N+1)-(N-n_\beta-1)-1=n_\beta+1.
\]
This holds for all sufficiently large $N$, and since the contours $D_N$ cover $\mathbb C$, $f_\beta$ has at most $n_\beta+1$ nonreal zeros. If this upper bound is attained, then all real zeros are simple.

The claim about conjugacy of the zeros follows again from the symmetry $
        f_\beta(\overline w)=\overline{f_\beta(w)}$.

Let $q_\beta$ be the number of zeros in the upper half-plane. Choose
$N$ sufficiently large that $D_{N+1}$ contains all nonreal zeros and
all real zeros in the intervals
$
        \mathcal I_0,\ldots,\mathcal I_{n_\beta+1}.
$
For the remaining $\mathcal I_k$ for $k\geq n_\beta+2$, Proposition~\ref{prop:realzeroesOdd} gives
exactly one simple zero in each of them. Therefore, for sufficiently
large $N$, the $N+1$ zeros inside $D_{N+1}$ consist of

\begin{itemize}
    \item the positive zero $w=\beta+2$;
    \item the $2q_\beta$ nonreal zeros;
    \item the real zeros in the exceptional intervals, with total
          multiplicity
          $\sum_{k=0}^{n_\beta+1}M_k(\beta);
          $
    \item one simple zero in each interval $\mathcal I_k$ for
    $n_\beta+2\leq k\leq N-n_\beta-2.$
\end{itemize}
There are $N-2n_\beta-3$ intervals of the last type, so we have
\[
        N+1
        =
        1
        +
        2q_\beta
        +
        \sum_{k=0}^{n_\beta+1}M_k(\beta)
        +
        N-2n_\beta-3,
        \qquad
        \text{so}
        \qquad
                2q_\beta
        +
        \sum_{k=0}^{n_\beta+1}
        \left[M_k(\beta)-1\right]
        =
        n_\beta+1.
\]
Proposition \ref{prop:realzeroesOdd} ensures the existence of at least
one real zero in each $\mathcal I_k$, so $M_k(\beta)-1$ can be read as the
excess beyond this guaranteed zero.
It remains to note that each $M_k(\beta)$ is odd. Indeed, from
\eqref{eq:grealOddI},
\[
        \lim_{w\downarrow -k-n_\beta-2}
        f_\beta(w)
        =
        -\infty, \qquad
        \text{and}
        \qquad
        \lim_{w\uparrow-k-n_\beta-1-\theta_\beta}
        f_\beta(w)
        =
        c(\beta)>0.
\]
Thus $f_\beta$ has opposite signs at the two ends of
$\mathcal I_k$. Moving from one endpoint to the other, the sign changes
exactly when a zero of odd multiplicity is crossed, so the number of
zeros of odd multiplicity should be odd, and therefore the sum of multiplicities, that is $M_k(\beta)$, is odd.

Finally, Proposition~\ref{prop:realzeroesOdd} shows that, for
$k\geq n_\beta+2$, the interval $\mathcal I_k$ contains exactly one
zero, and this zero is simple. Hence all multiple real zeros lie in the
finitely many exceptional intervals
$\mathcal I_0,\ldots,\mathcal I_{n_\beta+1}$. Each of these intervals
contains only finitely many zeros. Together with the finiteness of the
nonreal zero set, this proves the final claim.
\end{proof}

\subsection{A potential measure expansion for positive \texorpdfstring{$\beta$}{beta}}\label{sec:potential-positive-beta-proof}
\begin{lemma}[Meromorphic contour expansion; \cite{Titchmarsh-1958}, Section~3.2, p.~110]
\label{lem:meromorphic_contours}
Let $F$ be a meromorphic function on $\Cb$ and holomorphic at $0$, with simple
poles $a_1,a_2,\ldots$,
and let $A_1,A_2,\ldots$ denote the corresponding residues. Suppose
there exists a sequence of positively oriented, piecewise smooth,
simple closed contours $(C_N)_{N\geq1}$ such that
\begin{itemize}
    \item $0$ lies inside $C_N$, no pole lies on $C_N$, and these contours cover $\Cb$;

    \item the minimum distance $d_N$
    from $C_N$ to the origin satisfies $d_N\to\infty$ as $N\to\infty$;

    \item the length of $C_N$ is $\bo(d_N)$;

    \item $F(z)=\so(d_N)$ uniformly for $z\in C_N$.
\end{itemize}
Then, locally uniformly for
$z\in\Cb\setminus\{a_1,a_2,\ldots\}$,
\[
F(z)
=
F(0)
+
\lim_{N\to\infty}
\sum_{a_n\in\operatorname{int}(C_N)}
A_n
\left(
    \frac{1}{z-a_n}+\frac{1}{a_n}
\right).
\]
\end{lemma}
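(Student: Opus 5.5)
The statement to prove is the classical Mittag--Leffler expansion via contour integration; this is the Cauchy-kernel argument in Titchmarsh. Fix a compact set $K\subset\Cb\setminus\{a_1,a_2,\ldots\}$. Fix also $z\in K$, and take $N$ so large that $K\subset\operatorname{int}(C_N)$; this is possible because $d_N\to\infty$.

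I will consider
\[
I_N(z):=\frac1{2\pi i}\int_{C_N}F(w)\left(\frac1{w-z}-\frac1w\right)\D w
=\frac{z}{2\pi i}\int_{C_N}\frac{F(w)}{w(w-z)}\D w .
\]
The poles of the integrand inside $C_N$ are the following. At $w=z$ it is simple, with residue $F(z)$. At $w=0$ it is simple, since $F$ is holomorphic at $0$, with residue $-F(0)$. If $z$ and $0$ coincide, the two contributions cancel consistently. Finally, each $a_n$ is a simple pole of $F$ lying inside $C_N$, with residue $A_n\bigl(\frac1{a_n-z}-\frac1{a_n}\bigr)$. The residue theorem then gives
\[
I_N(z)=F(z)-F(0)-\sum_{a_n\in\operatorname{int}(C_N)}A_n\left(\frac1{z-a_n}+\frac1{a_n}\right).
\]

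It remains to show that $I_N(z)\to0$ uniformly on $K$. Let $\rho:=\sup_{z\in K}|z|$. For $w\in C_N$ we have $|w|\ge d_N$ and $|w-z|\ge d_N-\rho\ge d_N/2$ once $N$ is large. Using the length bound $\bo(d_N)$ for $C_N$ and the growth hypothesis $\sup_{C_N}|F|=\so(d_N)$, we get
\[
|I_N(z)|\le\frac{\rho}{2\pi}\cdot\frac{\sup_{C_N}|F|}{d_N\cdot d_N/2}\cdot\bo(d_N)=\so(1),
\]
and this bound is uniform in $z\in K$. This yields the limit formula locally uniformly. Note that in the statement the limit is taken along the partial sums over $\operatorname{int}(C_N)$, so no absolute convergence is needed.

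The only delicate point is bookkeeping rather than analysis. One must check that $K$ eventually lies inside $C_N$ and stays at distance of order $d_N$ from it, which follows from $d_N\to\infty$ since $d_N$ is the minimal distance of $C_N$ from the origin. One must also make sure no pole lies on $C_N$, so the residue theorem applies; this is part of the hypotheses. The main estimate is the one displayed above: the $\bo(d_N)$ length, the $\so(d_N)$ growth of $F$, and the $d_N^{-2}$ decay of the kernel together give $\so(1)$.
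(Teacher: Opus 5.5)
Your proof is correct, and it is essentially the same classical Cauchy-kernel argument as Titchmarsh's Section~3.2, which the paper cites instead of giving its own proof. You integrate $F(w)/(w(w-z))$ over $C_N$ and collect the residues at $z$, $0$ and the $a_n$. You then bound the integral by $\so(d_N)\cdot\bo(d_N)/d_N^2=\so(1)$, uniformly for $z$ in compact sets.
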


\begin{proof}[Proof of Theorem~\ref{thm:positive-global-results}]
Suppose first that $\beta$ is a positive integer. In this case
$1/\phi_\beta$ is rational, so the reciprocal representation follows
from its finite partial-fraction decomposition. The formula for
$U_\beta$ then follows by Laplace inversion and uniqueness.

We now assume that $\beta=n_\beta+\theta_\beta$ is not an integer, where
$n_\beta=\lfloor\beta\rfloor$. Define
\begin{equation}
\label{app:eq:varphi-beta}
\varphi_\beta(z)
:=
z\left[
    \frac{1}{\phi_\beta(z)}
    -\eta_\beta
    -\frac{1}{\mu_\beta z}
    -\sum_{\rho\in\mathcal E_\beta}
     \mathcal P_{\beta,\rho}(z)
\right].
\end{equation}
Since
\[
        \phi_\beta(z)
        =
        \mu_\beta z+O_\beta(z^2),
        \qquad
        \text{as }z\to0,
\]
the singular part of $1/\phi_\beta$ at zero is
$1/(\mu_\beta z)$. The expression in brackets is therefore
holomorphic at zero, and $\varphi_\beta$ extends holomorphically there
with $\varphi_\beta(0)=0$.
The principal parts at the exceptional zeros in $\mathcal E_\beta$
have also been removed. The remaining poles of $\varphi_\beta$ are
therefore precisely the simple real zeros
$s\in\mathcal S_\beta$, and
\begin{equation}
\label{app:eq:varphi-residue}
\operatorname*{Res}_{z=s}\varphi_\beta(z)
=
\frac{s}{\phi_\beta'(s)}.
\end{equation}
We use directly the square contours $D_N$ from
Lemma~\ref{lem:g}. Apply that lemma with
\[
        n=n_\beta,
        \qquad
        \ell=r=\theta_\beta,
        \qquad\text{and}
        \qquad
        \gamma=1-\theta_\beta/2.
\]
Therefore $R_N = N-\gamma= N-1+{\theta_\beta}/{2}$,
and $D_N$ is the positively oriented boundary of the square
\[
        \left\{
        w\in\Cb:
        |\Re w|\leq R_N,
        \quad
        |\Im w|\leq R_N
        \right\}.
\]

Recall that
\[
        f_\beta(w)
        =
        \frac{\phi_\beta(w-\beta-2)}
        {\Gamma(\beta+1)}.
\]
We therefore define
\[
        C_N:=D_N-(\beta+2).
\]

By Lemma~\ref{lem:g}, for all sufficiently large $N$, the function
$f_\beta$ has neither zeros nor poles on $D_N$. Consequently,
$\varphi_\beta$ has no poles on $C_N$.

For sufficiently large $N$, the origin lies inside $C_N$. Also, the interiors of
the contours are increasing and cover $\Cb$. The distance to the origin is $d_N:=R_N-(\beta+2)$, and
trivially the length of $C_N$ is $\bo(d_N)$. 
Moreover, the estimate \eqref{eq:reciprocal-square} gives
\[
\sup_{w\in D_N}
\left|
        \frac1{f_\beta(w)}
        -
        \frac1{c(\beta)}
\right|
=
O_\beta(R_N^{-\alpha}),\qquad
\text{for }\alpha:=n_\beta+1-\theta_\beta>0.
\]

Since
\[
        \phi_\beta(z)
        =
        \Gamma(\beta+1)f_\beta(z+\beta+2),
        \qquad
        \text{and}
        \qquad
        \eta_\beta
        =
        \frac1{\Gamma(\beta+1)c(\beta)},
\]
it follows that
\[
\sup_{z\in C_N}
\left|
        \frac1{\phi_\beta(z)}-\eta_\beta
\right|
=
O_\beta(R_N^{-\alpha}).
\]

The set $\mathcal E_\beta$ is finite, and hence
\[
        \frac1{\mu_\beta z}
        +
        \sum_{\rho\in\mathcal E_\beta}
        \mathcal P_{\beta,\rho}(z)
        =
        O_\beta(R_N^{-1}),
        \qquad
        \text{uniformly for }z\in C_N.
\]
Since $|z|=O_\beta(R_N)$ uniformly on $C_N$, we deduce from
\eqref{app:eq:varphi-beta} that
\[
\frac1{d_N}
\sup_{z\in C_N}|\varphi_\beta(z)|
=
O_\beta(R_N^{-\alpha})
+
O_\beta(R_N^{-1})
\longrightarrow0.
\]
Thus
\[
        \sup_{z\in C_N}|\varphi_\beta(z)|
        =
        \so(d_N),
\]
and all the assumptions of
Lemma~\ref{lem:meromorphic_contours} are satisfied for all large $N$.
Applying
it, together with
\eqref{app:eq:varphi-residue} and $\varphi_\beta(0)=0$, gives
\[
\varphi_\beta(z)
=
\lim_{N\to\infty}
\sum_{s\in\mathcal S_\beta\cap\operatorname{int}(C_N)}
\frac{s}{\phi_\beta'(s)}
\left(
        \frac1{z-s}+\frac1s
\right)
=
z\lim_{N\to\infty}
\sum_{s\in\mathcal S_\beta\cap\operatorname{int}(C_N)}
\frac1{\phi_\beta'(s)(z-s)}.
\]
We next remove the contour grouping. By
Propositions~\ref{prop:realzeroesEven} and
\ref{prop:realzeroesOdd}, the residues $1/\phi_\beta'(s)$ have the same
sign for all sufficiently negative $s\in\mathcal S_\beta$. Since only
finitely many elements of $\mathcal S_\beta$ are not sufficiently
negative, evaluating the preceding contour limit at any $z_0>0$ gives
\begin{equation}\label{app:eq:absolute-residue-sum}
\sum_{s\in\mathcal S_\beta}
\frac1{|\phi_\beta'(s)|(z_0-s)}
<\infty.
\end{equation}
As in the proof of Proposition~\ref{prop:U-expression}, for every
compact set $K\subset\Cb\setminus\mathcal Z_\beta$ and every
$\varepsilon>0$, the estimates
\[
\sup_{z\in K}\frac1{|z-s|}
\leq
\frac{C_K}{z_0-s},
\qquad\text{and}\qquad
e^{sx}
\leq
\frac{C_{\varepsilon,z_0}}{z_0-s},\quad\text{for }
s<0 \text{ and } x\geq\varepsilon,
\]
show that the corresponding series converge locally absolutely and
uniformly on their respective domains. Consequently, the
contour-grouped limit agrees with the ordinary sum, and
\[
\varphi_\beta(z)
=
z\sum_{s\in\mathcal S_\beta}
\frac1{\phi_\beta'(s)(z-s)}.
\]
Substituting this into \eqref{app:eq:varphi-beta}, we obtain
\begin{equation}\label{app:phi-reciprocal}
\frac1{\phi_\beta(z)}
=
\eta_\beta
+
\frac1{\mu_\beta z}
+
\sum_{\rho\in\mathcal E_\beta}
\mathcal P_{\beta,\rho}(z)
+
\sum_{s\in\mathcal S_\beta}
\frac1{\phi_\beta'(s)(z-s)}.
\end{equation}

Now define
\[
\widetilde U_\beta(\D x)
:=
\eta_\beta\delta_0(\D x)
\quad+
\left[
        \frac1{\mu_\beta}
        +
        \sum_{\rho\in\mathcal E_\beta}
        u_{\beta,\rho}(x)
        +
        \sum_{s\in\mathcal S_\beta}
        \frac{e^{sx}}{\phi_\beta'(s)}
\right]\D x.
\]
The absolute convergence of the last series in
\eqref{app:phi-reciprocal}, together with the finiteness of
$\mathcal E_\beta$, justifies termwise Laplace transformation. Thus, for $z>0$,
\[
\int_{[0,\infty)}
e^{-zx}\widetilde U_\beta(\D x)
=
\eta_\beta
+
\frac1{\mu_\beta z}
+
\sum_{\rho\in\mathcal E_\beta}
\mathcal P_{\beta,\rho}(z)
+
\sum_{s\in\mathcal S_\beta}
\frac1{\phi_\beta'(s)(z-s)}=
\frac1{\phi_\beta(z)}.
\]
The potential measure $U_\beta$ has the same Laplace transform.
Arguing as in the proof of Proposition~\ref{prop:U-expression}, since
$\widetilde U_\beta$  has a finite exponential moment by
\eqref{app:eq:absolute-residue-sum}, we
multiply both measures by $e^{-z_0x}$ and use uniqueness of Laplace
transforms to conclude that
\[
        U_\beta=\widetilde U_\beta.
\]
This proves \eqref{eq:u-positive-global-results} and
\eqref{eq:reciprocal-positive-global-results}. We are left only with transferring the result for $U_\beta$ to $\Eb D_n$ and obtain \eqref{eq:EDn-positive-global-results}. By
\eqref{app:eq:absolute-residue-sum}, the expansion of $U_\beta$ may be
integrated term by term against $F_n$. For $s\in\mathcal S_\beta$,
\[
\int_0^\infty F_n(x)e^{sx}\D x
=
-\frac1s-B(-s,n),
\]
while, for $\rho\in\mathcal E_\beta$,
\[
\int_0^\infty F_n(x)u_{\beta,\rho}(x)\D x
=
\mathcal P_{\beta,\rho}(0)-\mathcal B_{\beta,\rho}(n).
\]
Moreover, taking the finite part at zero in
\eqref{app:phi-reciprocal}, we have
\[
\eta_\beta
+\sum_{\rho\in\mathcal E_\beta}\mathcal P_{\beta,\rho}(0)
-\sum_{s\in\mathcal S_\beta}
\frac1{s\phi_\beta'(s)}
=
-\frac{\phi_\beta''(0)}{2\mu_\beta^2}.
\]
Combining these identities with
$\Eb D_n=\mathcal U_\beta F_n$ gives exactly
\eqref{eq:EDn-positive-global-results}.
Finally, for every fixed $z_0>0$ and all $s\leq -z_0$,
\[
B(-s,n)
=
\int_0^1 t^{-s-1}(1-t)^{n-1}\D t\leq
\int_0^1 t^{-s-1}\D t
=
\frac1{-s}
\leq
\frac2{z_0-s},
\]
so the absolute convergence of the last series in
\eqref{eq:EDn-positive-global-results} follows from
\eqref{app:eq:absolute-residue-sum}. For positive integer $\beta$, the
same argument applies to the finite partial-fraction decomposition.
\end{proof}
         \section{Discrete-height asymptotics}
    \label{app:discrete-height}
    \begin{proof}[Proof of Corollary~\ref{cor:discrete-height-phase-results}]
    Put
    \[
            w_\beta(s):=s^{\beta+1}(1-s)^\beta,
            \qquad\text{for }0<s<1.
    \]
    Summing the formula for $a_\beta(n,j)$ in
    Theorem~\ref{thm:potential-identities-results}, writing $k=j-1$, and using
    \[
            \phi_\beta(k)
            =
            \int_0^1(1-s^k)w_\beta(s)\D s,
    \]
    Tonelli's theorem and the binomial formula give
    \begin{align*}
            \Eb L_n
            &=
            \int_{[0,\infty)}
            \sum_{k=1}^{n-1}
            \phi_\beta(k)\binom{n-1}{k}
            e^{-kx}(1-e^{-x})^{n-1-k}
            U_\beta(\D x)\\
            &=
            \int_0^1 w_\beta(s)
            \int_{[0,\infty)}
            \sum_{k=1}^{n-1}(1-s^k)\binom{n-1}{k}
            e^{-kx}(1-e^{-x})^{n-1-k}
            U_\beta(\D x)\,\D s\\
            &=
            \int_0^1 w_\beta(s)
            \int_{[0,\infty)}
            \left[1-\bigl(1-(1-s)e^{-x}\bigr)^{n-1}\right]
            U_\beta(\D x)\,\D s\\
            &=
            \int_0^1 w_\beta(s)
            \int_{[0,\infty)}
            F_n\bigl(x-\log(1-s)\bigr)
            U_\beta(\D x)\,\D s.
    \end{align*}
    We recall that we defined in
    \eqref{eq:def F_n}
    $F_n(x):=1-(1-e^{-x})^{n-1}=\P(M_{n-1}>x)$. Hence, for $s\in(0,1)$, 
    Tonelli's theorem gives
    \begin{align*}
            \int_{[0,\infty)}F_n(x-\log(1-s))U_\beta(\D x)
            &=
            \Eb\int_{[0,\infty)}
            \mathbf 1_{\{x<M_{n-1}+\log(1-s)\}}U_\beta(\D x)\\
            &=
            \Eb U_\beta\bigl((M_{n-1}+\log(1-s))_+\bigr),
    \end{align*}
    where, as above, $U_\beta(x)=U_\beta([0,x))$. After the change of
    variables $s=1-e^{-a}$, another application of Tonelli's theorem gives
    the exact representation
    \begin{equation}\label{eq:Ln-convolution-proof}
            \Eb L_n=\Eb V_\beta(M_{n-1}),
    \end{equation}
    where
    \[
            V_\beta(x)
            :=
            \int_0^x k_\beta(a)U_\beta(x-a)\D a,
            \qquad
            k_\beta(a)
            :=
            e^{-(\beta+1)a}(1-e^{-a})^{\beta+1}.
    \]
    The three regimes are determined by the behaviour of $k_\beta$ at
    infinity: it is integrable when $\beta>-1$, equals $1$ when
    $\beta=-1$, and grows exponentially when $-2<\beta<-1$.
    We shall use
    \[
            U_\beta(x)\sim\frac{x}{\mu_\beta},
            \qquad
            \text{and}
            \qquad
            U_\beta(x)\le C_\beta(1+x),
    \]
    from \eqref{eq:inverse-mean-proof} and
    \eqref{eq:renewal-linear-bound-proof}.
    
    Suppose first that $\beta>-1$. Then $k_\beta$ is integrable and
    \[
            \int_0^\infty k_\beta(a)\D a
            =
            B(\beta+2,\beta+1)
            =
            \phi_\beta(\infty).
    \]
    Moreover,
    \[
            \frac{V_\beta(x)}x
            =
            \int_0^\infty
            \mathbf 1_{\{a<x\}}k_\beta(a)
            \frac{U_\beta(x-a)}x\D a.
    \]
    The linear bound for $U_\beta$ provides an integrable majorant, and
    dominated convergence therefore gives
    \[
            V_\beta(x)
            \sim
            \frac{\phi_\beta(\infty)}{\mu_\beta}x,
            \qquad\text{as }x\to\infty.
    \]
    The same bound also gives $V_\beta(x)\le C_\beta(1+x)$.
    
    If $\beta=-1$, then $k_{-1}(a)=1$, and hence
    \[
            V_{-1}(x)
            =
            \int_0^xU_{-1}(u)\D u
            \sim
            \frac{x^2}{2\mu_{-1}},
            \qquad\text{as }x\to\infty.
    \]
    Moreover, $V_{-1}(x)\le C(1+x^2)$.
    
    Finally, suppose that $-2<\beta<-1$. Put $        \alpha:=-\beta-1$, so that $\alpha\in(0,1)$, and
    \[
            k_\beta(a)
            =
            e^{\alpha a}(1-e^{-a})^{-\alpha}.
    \]
    For $x\ge0$, define
    \[
            g_\beta(x)
            :=
            e^{-\alpha x}V_\beta(x).
    \]
    Changing variables  $u=x-a$ gives
    \[
            g_\beta(x)
            =
            \int_0^x
            e^{-\alpha u}
            (1-e^{-(x-u)})^{-\alpha}
            U_\beta(u)\D u.
    \]
    For $x>1$, split the integral at $x-1$. On the interval from $0$ to
    $x-1$, we have
    \[
            (1-e^{-(x-u)})^{-\alpha}
            \le
            (1-e^{-1})^{-\alpha}.
    \]
    The linear bound for $U_\beta$ therefore provides an integrable
    majorant which is a constant multiple of
    \[
            e^{-\alpha u}(1+u).
    \]
    For every fixed $u\ge0$, the remaining factor converges to $1$ as
    $x\to\infty$. Extending the integrand by zero outside the interval from
    $0$ to $x-1$, dominated convergence gives
    \[
            \int_0^{x-1}
            e^{-\alpha u}
            (1-e^{-(x-u)})^{-\alpha}
            U_\beta(u)\D u
            \longrightarrow
            \int_0^\infty e^{-\alpha u}U_\beta(u)\D u
    \]
    as $x\to\infty$.
    For the remaining interval, put $r=x-u$. The linear bound for
    $U_\beta$ gives
    \[
            \int_{x-1}^x
            e^{-\alpha u}
            (1-e^{-(x-u)})^{-\alpha}
            U_\beta(u)\D u
            \le
            C_\beta(1+x)e^{-\alpha x}
            \int_0^1
            e^{\alpha r}(1-e^{-r})^{-\alpha}\D r.
    \]
    The last integral is finite because its integrand is of order
    $r^{-\alpha}$ as $r\downarrow0$ and $\alpha<1$. Hence the right-hand
    side converges to zero as $x\to\infty$.
    
    Using the definition $U_\beta(u)=U_\beta([0,u))$ and Tonelli's theorem,
    we obtain
    \begin{align*}
            \int_0^\infty e^{-\alpha u}U_\beta(u)\D u
            &=
            \int_{[0,\infty)}
            \int_v^\infty e^{-\alpha u}\D u\,
            U_\beta(\D v)=
            \frac1\alpha
            \int_{[0,\infty)}
            e^{-\alpha v}U_\beta(\D v)=
            \frac1{\alpha\phi_\beta(\alpha)}.
    \end{align*}
    Consequently,
    \[
            g_\beta(x)
            \longrightarrow
            \frac1{\alpha\phi_\beta(\alpha)}
    \]
    as $x\to\infty$. The preceding estimates, together with the same
    integral bound for $0\le x\le1$, show that $g_\beta$ is bounded.
    Therefore, for every $x\ge0$,
    $V_\beta(x)\le C_\beta e^{\alpha x}.
    $
    
    We have obtained the asymptotics of $V_\beta(x)$ for large $x$. It remains to evaluate these asymptotics at $M_{n-1}$ in order to use them in \eqref{eq:Ln-convolution-proof}, that is, $\Eb L_n = \Eb V_\beta(M_{n-1})$. By
    \eqref{eq:maximum-exponential-proof},
    \[
            \frac{M_{n-1}}{\log n}\longrightarrow1
            \qquad\text{in }L^2.
    \]
    In particular,
    \[
            \frac{M_{n-1}^2}{(\log n)^2}\longrightarrow1
            \qquad\text{in }L^1.
    \]
    Together with the bounds for $V_\beta$, this gives the required uniform
    integrability in both cases $\beta\ge-1$. Consequently,
    \[
            \Eb V_\beta(M_{n-1})
            \sim
            \begin{cases}
            \displaystyle
            \frac{\phi_\beta(\infty)}{\mu_\beta}\log n,
            &\beta>-1,\\[1em]
            \displaystyle
            \frac1{2\mu_{-1}}(\log n)^2,
            &\beta=-1.
            \end{cases}
    \]
    Return now to the case $-2<\beta<-1$. Since $M_{n-1}$ is the maximum
    of $n-1$ independent standard exponential random variables,
    $e^{-M_{n-1}}$ is the minimum of $n-1$ independent uniform random
    variables.
    It follows that,
    as $n\to\infty$,
    \begin{align*}
            \Eb e^{\alpha M_{n-1}}
            &=
            (n-1)\int_0^1
            u^{-\alpha}(1-u)^{n-2}\D u=
            \frac{\Gamma(n)\Gamma(1-\alpha)}
                 {\Gamma(n-\alpha)}     \sim
            \Gamma(1-\alpha)n^\alpha
    \end{align*}
    Put $        \ell_\beta
            :=
            1/{\alpha\phi_\beta(\alpha)}.$
    We have shown that $g_\beta$ is bounded and converges to $\ell_\beta$
    as $x\to\infty$. Hence
    \[
            C_\beta
            :=
            \sup_{x\ge0}|g_\beta(x)-\ell_\beta|
            <
            \infty.
    \]
    Let $\varepsilon>0$ and choose $A>0$ such that
    $
            |g_\beta(x)-\ell_\beta|\le\varepsilon
            $ for $x>A.
    $
    Since
    \[
            V_\beta(M_{n-1})
            =
            e^{\alpha M_{n-1}}g_\beta(M_{n-1}),
    \]
    we have
    \begin{align*}
    \left|
            \Eb V_\beta(M_{n-1})
            -
            \ell_\beta\Eb e^{\alpha M_{n-1}}
     \right|&\leq
            \Eb\left[
            e^{\alpha M_{n-1}}
            |g_\beta(M_{n-1})-\ell_\beta|;
            M_{n-1}\le A
            \right]\\
    &\qquad+
            \Eb\left[
            e^{\alpha M_{n-1}}
            |g_\beta(M_{n-1})-\ell_\beta|;
            M_{n-1}>A
            \right]\\
    &\le
            C_\beta e^{\alpha A}
            +
            \varepsilon\Eb e^{\alpha M_{n-1}}.
    \end{align*}
    Dividing by $\Eb e^{\alpha M_{n-1}}$ gives
    \[
            \left|
            \frac{\Eb V_\beta(M_{n-1})}
                 {\Eb e^{\alpha M_{n-1}}}
            -
            \ell_\beta
            \right|
            \le
            \frac{C_\beta e^{\alpha A}}
                 {\Eb e^{\alpha M_{n-1}}}
            +
            \varepsilon.
    \]
    For fixed $A$, the first term converges to zero as $n\to\infty$.
    Letting then $\varepsilon\downarrow0$, we obtain
    \[
            \frac{\Eb V_\beta(M_{n-1})}
                 {\Eb e^{\alpha M_{n-1}}}
            \longrightarrow
            \frac1{\alpha\phi_\beta(\alpha)}.
    \]
    Therefore,
    \[
            \Eb V_\beta(M_{n-1})
            \sim
            \frac{\Gamma(1-\alpha)}
                 {\alpha\phi_\beta(\alpha)}
            n^\alpha
    \]
    as $n\to\infty$. Substituting $\alpha=-\beta-1$, proves the last part of
    \eqref{eq:discrete-height-phase-results}.
    \end{proof}

\subsection{Limit theorems for the discrete height}\label{appn: C1}

In this section we provide limit theorems for the discrete height $L_n$,
based on its interpretation through a regenerative composition generated by
the tagged subordinator.

We first describe semi-formally the connection. A composition of an integer
$m$ is an ordered sequence $(n_1,\ldots,n_k)$ of positive integers such that
$n_1+\cdots+n_k=m$; unlike for a partition, the order of the parts matters.
A family $(\mathcal C_m)_{m\ge1}$ of random compositions is called a
regenerative composition structure if, conditionally on the first part of
$\mathcal C_m$ being $r$, deleting this part leaves a composition distributed
as $\mathcal C_{m-r}$; see
\cite[Definition~1.1]{Gnedin-Pitman-2005}.

To see the connection with $L_n$, tag label~$1$ and follow its spine. At
every split visible in the restriction to $[n]$, some of the other $n-1$
labels leave the block containing label~$1$. Recording these numbers in the
order in which the splits occur gives a composition of $n-1$. Its number of
parts is exactly the discrete height of label~$1$, which, by exchangeability,
has the same distribution as $L_n$.

This can be visualised using a construction close to Kingman's paintbox.
Conditionally on the tagged fragment, we may represent the nested blocks
containing label~$1$ by intervals with a common left endpoint and lengths
\[
        |\Pi_1(t)|=e^{-\xi_t}.
\]
Place iid uniform random variables $U_1,U_2,\ldots$ on $(0,1)$, with $U_j$
corresponding to label $j+1$. In the restriction to $[n]$, label $j+1$
remains in the tagged block at time $t$ precisely when
\[
        U_j<e^{-\xi_t}, \qquad \text{for } 1\leq j\leq n-1.
\]
Thus, whenever the interval becomes shorter, the uniform points which are cut
off correspond exactly to the labels leaving the tagged block at that split.

Passing to logarithmic coordinates, put $E_j=-\log U_j$. Then
$E_1,E_2,\ldots$ are iid exponential random variables of rate $1$,
independent of $\xi$. Consider the closed range
\[
        \mathcal R_\beta
        :=
        \overline{\{\xi_t:t\ge0\}}.
\]
Its complement is a union of disjoint open intervals, called gaps, which
correspond to the jumps of $\xi$. Since $\xi$ has no drift, for example because $d=
\lim_{a\to\infty}\phi_\beta(a)/a=0$,
$\mathcal R_\beta$ has zero Lebesgue measure and, almost surely, every $E_j$
belongs to a gap. If $\xi$ jumps at time $t$ from $a=\xi_{t-}$ to
$b=\xi_t$, the corresponding gap is $(a,b)$ and the points in this gap
correspond exactly to the labels leaving the tagged block at that split.

For each $m\ge1$, list the gaps containing at least one of
$E_1,\ldots,E_m$ from left to right, which is also the order of the
corresponding splits along the tagged spine, and record the number of sample
points in each of them. The resulting ordered sequence
\[
        \mathcal C_m=(N_{m,1},\ldots,N_{m,K_m})
\]
is a composition of $m$, and $K_m$ is its number of parts, or equivalently
the number of occupied gaps. By
\cite[Theorem~5.2(i)]{Gnedin-Pitman-2005},
$(\mathcal C_m)_{m\ge1}$ is a regenerative composition structure.
Intuitively, this follows from the regenerative property of the range of a
subordinator together with the memoryless property of the exponential
sample.

For $\beta=-1$, Iksanov
\cite[p.~2]{Iksanov-2025-Regenerative} identified the harmonic descent chain
with the decrement chain of this composition. A stronger joint identity is
given in
\cite[Proposition~2.1]{Iksanov-Nikitin-Yakymiv-2026}; see also
\cite[Section~5.4.1]{beta2-arxiv} for the corresponding paintbox construction
in the critical case. The following lemma records that the same coupling
holds throughout the beta-splitting family.

    \begin{lemma}\label{lem:Ln-occupied-gaps}
    For every $n\ge2$,
    \begin{equation}\label{eq:Ln-occupied-gaps}
            L_n\stackrel{d}=K_{n-1}.
    \end{equation}
    Moreover, there is a coupling in which $K_{n-1}$ equals the discrete
    height of label $1$ almost surely.
    \end{lemma}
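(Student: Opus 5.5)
The plan is to build the coupling explicitly on a single probability space carrying the homogeneous fragmentation $\Pi$, and then read off the discrete height of label~$1$ as the number of occupied gaps.

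\textbf{Step 1: paintbox inside the tagged block.} Realise $\Pi$ through an interval fragmentation $I(t)$ of $[0,1]$, with iid uniforms attached to the labels. Using the homogeneous fragmentation property and a rearrangement of the intervals, we may arrange that the interval containing label~$1$ is always $[0,e^{-\xi_t})$, where $\xi$ is the tagged subordinator of \eqref{eq:tagged-subordinator-prelim}. At each dislocation of the tagged interval, the retained part is placed on the left, and the discarded part is sent off to evolve independently.

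\textbf{Step 2: positions of the other labels.} Conditionally on $\xi$, the positions $U_1,\dots,U_{n-1}$ of labels $2,\dots,n$ are iid uniform on $(0,1)$. Label $j+1$ lies in $\Pi_1(t)$ exactly when $U_j<e^{-\xi_t}$. This is the exchangeable paintbox description: membership in the tagged block depends only on the nested tagged intervals. It is consistent with \eqref{eq:tagged-binomial-proof}, because $Z_n(t)-1$ is then $\operatorname{Bin}(n-1,e^{-\xi_t})$ given $\xi_t$. Set $E_j=-\log U_j$; these are iid $\operatorname{Exp}(1)$ and independent of $\xi$. Label $j+1$ leaves the tagged block at the first time $t$ with $\xi_t\ge E_j$, namely $T_{E_j}$ in the notation of \eqref{eq:Dn-passage-results}.

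\textbf{Step 3: splits versus gaps.} A split of the restricted tagged block, i.e.\ a change of $\Pi_1(t)\cap[n]$, occurs exactly at a time $t$ where some $E_j\in(\xi_{t-},\xi_t]$. Since $\xi$ has zero drift, its range $\mathcal R_\beta$ is Lebesgue-null, so almost surely each $E_j$ lies in the interior of a gap $(\xi_{t-},\xi_t)$. Distinct gaps correspond to distinct jump times, and the ordering of gaps from left to right matches the ordering of jump times, because $\xi$ is increasing. Each jump time whose gap contains at least one $E_j$ is therefore one visible split along the spine of label~$1$ in the restriction to $[n]$. Conversely, each visible split removes at least one label, which forces an occupied gap. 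Consequently the discrete height of label~$1$ equals the number of occupied gaps among $E_1,\dots,E_{n-1}$, and this number is $K_{n-1}$ by definition.

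\textbf{Step 4: distributional identity.} Exchangeability gives that the height of label~$1$ has the law of $L_n$, which yields \eqref{eq:Ln-occupied-gaps}.

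\textbf{Main obstacle.} The delicate point is Step~2: justifying that the labels' uniforms are independent of the whole path of $\xi$, and that "remaining in the tagged block" is exactly $U_j<e^{-\xi_t}$ simultaneously for all $t$. This is not merely a statement for each fixed $t$. For infinite $\nu_\beta$ (the case $\beta\le-1$) one cannot simply rely on finitely many dislocations. I would first try to use Bertoin's construction of the tagged fragment via a Poisson point process of dislocations together with independent marking, as in \cite[Theorem~3.2]{Bertoin-2006}. Label $j+1$ stays with label~$1$ at a dislocation with retained fraction $x$ with conditional probability $x$, given survival so far. Iterating this, the conditional survival probability up to time $t$ given $\xi$ is $e^{-\xi_t}$, and these events are conditionally independent across labels. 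The events are also nested in $t$, which shows that the joint law of the exit times $(T_{E_j})_j$ given $\xi$ is exactly the one produced by independent exponential levels. Working on $[n]$, where the rate of splits is finite, lets us proceed split by split and then pass to the limit.
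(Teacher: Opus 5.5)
Your proposal is correct and follows essentially the same route as the paper. The paper likewise shows that, conditionally on $\xi$, the exit times $T_2,\dots,T_n$ of the labels from the tagged block are iid with $\P(T_j>t\mid\xi)=e^{-\xi_t}$. It then realises them as $\inf\{t\ge0:\xi_t\ge E_j\}$ with independent exponential levels $E_j$, and identifies the discrete height of label~$1$ with the number of distinct exit times, i.e.\ the number of occupied gaps.

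One remark on your Step~1: the literal pathwise rearrangement is not needed. As stated it is also slightly inconsistent, since label~$1$ also carries a uniform position. What the argument actually uses is the equality of conditional joint laws of the exit times given $\xi$, which you establish in your final paragraph, and this is exactly what the paper uses.
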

    
    \begin{proof}
    Tag label $1$ and put
    \[
            p_t:=|\Pi_1(t)|=e^{-\xi_t}.
    \]
    For $j\ge2$, define the first time label $j$ split from the tagged block as
    \[
            T_j:=\inf\{t\ge0:j\notin\Pi_1(t)\}.
    \]
Conditionally
on $\xi$, the variables $T_2,\ldots,T_n$ are iid with
\[
        \P(T_j>t\mid\xi)=p_t,
        \qquad \text{for }t\ge0,
\]
which follows from the paintbox construction. 
Let
$U_2,\ldots,U_n$ be iid uniform random variables, independent of $\xi$, and set
\[
        \widetilde T_j:=\inf\{t\ge0:p_t\le U_j\}.
\] 
Since $p$ is non-increasing,
\[
        \P(\widetilde T_j>t\mid\xi)
        =
        \P(U_j<p_t\mid\xi)
        =
        p_t,
\]
so, conditionally on $\xi$, the random variables
$\widetilde T_2,\ldots,\widetilde T_n$ are iid and have the same conditional
joint law as $T_2,\ldots,T_n$. We may therefore work with this version of
the departure times. In particular, simultaneously for every $t\ge0$,
\[
        j\in\Pi_1(t)
        \quad\Longleftrightarrow\quad
        U_j<e^{-\xi_t}       \quad\Longleftrightarrow\quad \xi_t<-\log U_j.
\]
    Put $E_j:=-\log U_j$. Then $E_2,\ldots,E_n$ are iid exponential random
    variables of rate $1$, independent of $\xi$. If $\xi$ jumps at time $t$
    from $a=\xi_{t-}$ to $b=\xi_t$, then
    \[
            j\in\Pi_1(t-)\setminus\Pi_1(t)
            \quad\Longleftrightarrow\quad
            e^{-b}\le U_j<e^{-a}          \quad\Longleftrightarrow\quad
            a<E_j\le b.
    \]
    Thus this jump produces a visible split on the ancestral line of label $1$
    in the restriction to $[n]$ if and only if the corresponding gap contains
    at least one of $E_2,\ldots,E_n$. The discrete height of label $1$ therefore
    equals $K_{n-1}$ almost surely in this coupling. Since label $1$ has the
    same distribution as a uniformly chosen label,
    \eqref{eq:Ln-occupied-gaps} follows.
    \end{proof}
    
\begin{corollary}[Central limit theorem for the discrete height]
\label{cor:Ln-clt-appendix}
If $\beta>-1$, then
\begin{equation}\label{eq:Ln-clt-finite-appendix}
        \frac{
        L_n-\dfrac{\phi_\beta(\infty)}{\mu_\beta}\log n
        }{
        \sqrt{
        \left(
        \dfrac{\phi_\beta(\infty)^2\tau_\beta^2}{\mu_\beta^3}
        -
        \dfrac{\phi_\beta(\infty)}{\mu_\beta}
        \right)\log n
        }}
        \xrightarrow{d}\mathcal N(0,1).
\end{equation}
At $\beta=-1$,
\begin{equation}\label{eq:Ln-clt-critical-appendix}
        \frac{
        L_n-\dfrac1{2\mu_{-1}}(\log n)^2
        }{
        \sqrt{
        \dfrac{\tau_{-1}^2}{3\mu_{-1}^3}(\log n)^3
        }}
        \xrightarrow{d}\mathcal N(0,1),
\end{equation}
where
\[
        \mu_{-1}=\zeta(2),
        \qquad\text{and}\qquad
        \tau_{-1}^2=2\zeta(3).
\]
\end{corollary}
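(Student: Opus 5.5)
The plan rests on Lemma~\ref{lem:Ln-occupied-gaps}: we will prove both statements for $K_{n-1}$, the number of gaps of $\mathcal R_\beta$ that contain at least one of the iid exponential points $E_1,\dots,E_{n-1}$. Throughout put $\ell_n:=\log n$. We reuse the facts from the proof of Corollary~\ref{cor:Dn-renewal-results}: the maximum $M_{n-1}$ satisfies $M_{n-1}-\ell_n=O_{\P}(1)$, and the passage time $T_x$ obeys the functional version of the renewal CLT \eqref{eq:inverse-clt-proof}. The second fact follows from Donsker's theorem for $\xi$ by the standard inversion.

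\textbf{Case $\beta>-1$.} Here $\nu_\beta$ is finite, so $\xi$ is compound Poisson. Its jump rate is $\phi_\beta(\infty)$ and its jump law $J$ has density $\lambda_\beta/\phi_\beta(\infty)$. Let $N(x)$ be the number of jumps of $\xi$ strictly before $T_x$, a renewal counting process with interarrival law $J$. We have
\[
m:=\Eb J=\frac{\mu_\beta}{\phi_\beta(\infty)},\qquad \Var J=\frac{\tau_\beta^2}{\phi_\beta(\infty)}-m^2,
\]
and the renewal CLT gives that $N(x)$ has mean $\sim x/m$ and variance $\sim x\Var(J)/m^3$. This is exactly the variance in \eqref{eq:Ln-clt-finite-appendix}.

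It then remains to show $K_{n-1}-N(\ell_n)=O_{\P}(1)$, which I split into two parts.
\begin{enumerate}[(a)]
\item Gaps starting beyond $\ell_n+A$ are occupied only if some $E_j>\ell_n+A$. The expected number of such points is at most $e^{-A}$.
\item Gaps below $\ell_n+A$ that are empty. A gap $(a,a+J)$ is empty with probability $\bigl(1-e^{-a}(1-e^{-J})\bigr)^{n-1}\le\exp\bigl(-ne^{-a}(1-e^{-J})\bigr)$. Summing against $\phi_\beta(\infty)\,U_\beta(\D a)$ and using $U_\beta(\D a)\le C\,\D a$ locally uniformly, together with the substitution $y=\ell_n-a$, bounds the expected number of empty gaps by
\[
C\int_{\Rb}\Eb\exp\bigl(-e^{y}(1-e^{-J})\bigr)\D y\le C'\bigl(1+\Eb|\log(1-e^{-J})|\bigr)<\infty.
\]
The last expectation is finite because $\lambda_\beta(x)\asymp x^\beta$ near $0$ with $\beta>-1$.
\end{enumerate}
Combining (a) and (b) with the monotonicity of $N$ and $N(\ell_n\pm A)=N(\ell_n)+O_{\P}(1)$ gives the claim, and \eqref{eq:Ln-clt-finite-appendix} follows by Slutsky.

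\textbf{Case $\beta=-1$.} Now there are infinitely many small jumps, so counting jumps is no longer useful. Instead I would use the compensator of $K$ along the tagged spine. In the $[n]$-restriction, conditionally on $\xi$, the tagged block has $Z_n(t)$ labels and splits visibly at rate $\phi_{-1}(Z_n(t)-1)=h_{Z_n(t)-1}$. Hence
\[
K_{n-1}-\int_0^{D_n}h_{Z_n(t)-1}\,\D t
\]
is a martingale whose variance equals the expected compensator, which is $O(\ell_n^2)=o(\ell_n^3)$. By \eqref{eq:tagged-binomial-proof}, $Z_n(t)\approx ne^{-\xi_t}$, so $h_{Z_n(t)-1}=(\ell_n-\xi_t)_+ +O(1)$ except on a time set whose contribution is $O_{\P}(\ell_n)$. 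This uses $D_n=T_{\ell_n}+O_{\P}(\sqrt{\ell_n})$ and the fact that $h$ is small near the end of the path.

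The compensator is therefore, up to $o_{\P}(\ell_n^{3/2})$,
\[
\int_0^{T_{\ell_n}}(\ell_n-\xi_t)\,\D t=\int_0^{\ell_n}T_x\,\D x .
\]
Write $T_x=x/\mu_{-1}+\tau_{-1}\mu_{-1}^{-3/2}B(x)+o(\sqrt{\ell_n})$ uniformly on $[0,\ell_n]$ via the functional CLT. Then
\[
\int_0^{\ell_n}T_x\,\D x=\frac{\ell_n^2}{2\mu_{-1}}+\frac{\tau_{-1}}{\mu_{-1}^{3/2}}\int_0^{\ell_n}B+o_{\P}(\ell_n^{3/2}),
\]
and $\int_0^{L}B$ is Gaussian with variance $L^3/3$. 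This gives \eqref{eq:Ln-clt-critical-appendix}. The constants $\mu_{-1}=\zeta(2)$ and $\tau_{-1}^2=2\zeta(3)$ come from differentiating $\phi_{-1}(z)=\psi(z+1)-\psi(1)$ at $0$: $\mu_{-1}=\psi'(1)$ and $\tau_{-1}^2=-\psi''(1)$.

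\textbf{Main obstacle.} I expect the hardest step to be the uniform replacement $h_{Z_n(t)-1}\approx\ell_n-\xi_t$ over the whole path. It needs binomial concentration while $ne^{-\xi_t}$ is large, plus a separate bound for the final stretch where it is $O(1)$. I would attempt it by splitting at the time $T_{\ell_n-A}$ and controlling the last stretch by its length, which is $O_{\P}(\sqrt{\ell_n})$, times $h$, which is $O(\log A)$ there.
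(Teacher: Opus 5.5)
Your proof is correct, and it takes a different route from the paper. The paper proves both statements by quoting Gnedin--Iksanov's limit theorems for the number of occupied gaps of a regenerative composition:
\begin{itemize}
\item for $\beta>-1$, Theorem~4.1(a), after checking $\Eb|\log(1-W)|<\infty$ for $W=e^{-X}$;
\item at $\beta=-1$, Theorem~3.1(a), after checking that $\Phi(t)=\log t+\gamma+\so(1)$ satisfies Condition~A; the limit $\int_0^1B(1-y)\,\D y$ then supplies the factor $1/3$.
\end{itemize}
You instead prove both cases from first principles. For $\beta>-1$ you show that $K_{n-1}$ differs from the renewal count $N(\ell_n)$ by $O_{\P}(1)$, and then apply the classical renewal CLT. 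Your empty-gap estimate (b) is exactly where the paper's hypothesis $\Eb|\log(1-W)|<\infty$ enters. At $\beta=-1$ you split the split-counting process of the chain $Z_n$ into two parts. The martingale part is $O_{\P}(\ell_n)$. The compensator equals $\int_0^{\ell_n}T_x\,\D x+O_{\P}(\ell_n)$, and its $\ell_n^{3/2}$ fluctuations come from the functional CLT for $T$. It is enough to apply continuous mapping under $f\mapsto\int_0^1 f$ to $u\mapsto(T_{u\ell_n}-u\ell_n/\mu_{-1})/\sqrt{\ell_n}$; no strong approximation is needed. Your route is longer but elementary, and it makes the factor $1/3$ transparent. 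It also shows why the regimes need different arguments. For $\beta>-1$ the martingale part is of the same order $\ell_n^{1/2}$ as the compensator's fluctuations, and negatively correlated with them. So a compensator argument cannot produce the correction $-\phi_\beta(\infty)/\mu_\beta$ there, and you rightly switch to the renewal sandwich. The paper's route is shorter, at the price of a black box.

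A few details need repair, though none affects the approach.
\begin{itemize}
\item In (b), the integral over $y\in\Rb$ diverges, because the integrand tends to $1$ as $y\to-\infty$. The sum runs only over gap starts $a<\ell_n+A$, so after a unit shift $y>-A-1$. This gives the bound $C'(1+A+\Eb|\log(1-e^{-J})|)$.
\item The bound $U_\beta(\D a)\le C\,\D a$ is false: $U_\beta$ has an atom $\eta_\beta\delta_0$, and for $-1<\beta<0$ its density blows up at $0$. Use instead $\sup_aU_\beta([a,a+1))\le U_\beta([0,1))$, together with the fact that the empty-gap probability is monotone in $a$.
\item At $\beta=-1$, $h_{Z_n(t)-1}$ is the split intensity with respect to the natural filtration of the Markov chain $Z_n$, not conditionally on $\xi$. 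Given $\xi$, splits can only occur at the jump times of $\xi$. Writing $A_\infty$ for the total compensator, the identity $\Eb(K_{n-1}-A_\infty)^2=\Eb A_\infty=\Eb L_n$ holds in that filtration. The comparison with $(\ell_n-\xi_t)_+$ is then done pathwise in the coupling of Lemma~\ref{lem:Ln-occupied-gaps}.
\item The last stretch is better than you state. In that coupling $D_n=T_{M_{n-1}}$ exactly, so $D_n-T_{\ell_n-A}=O_{\P}(1)$ by the strong Markov property. On that stretch $h_{Z_n(t)-1}$ is $O_{\P}(1+A)$, not $O(\log A)$.
\end{itemize}
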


\begin{proof}[Proof of Corollary~\ref{cor:Ln-clt-appendix}]
By Lemma~\ref{lem:Ln-occupied-gaps},
\[
        L_n\stackrel{d}=K_{n-1},
\]
so it remains to apply the corresponding limit theorems for the number of
occupied gaps.

Suppose first that $\beta>-1$ and put
\[
        c_\beta:=\phi_\beta(\infty)
        =B(\beta+2,\beta+1).
\]
The L\'evy measure of $\xi$ is finite and has total mass $c_\beta$. Note that because the
occupied gaps depend only on the jump sizes and not on the times at which
they happen, we may assume that the jump rate is one. Let $X$
be a positive random variable which has the jump distribution,
\[
        \P(X\in\D x)
        =
        \frac1{c_\beta}
        e^{-(\beta+2)x}(1-e^{-x})^\beta\D x.
\]
By the definitions of $\mu_\beta$ and $\tau_\beta^2$, see \eqref{eq:mu-tau-results},
we have that
\begin{equation}
    \label{eq: EX and VarX}
        \Eb X=\frac{\mu_\beta}{c_\beta},
        \qquad
        \text{and}
\qquad        \Var(X)
        =
        \frac{\tau_\beta^2}{c_\beta}
        -\frac{\mu_\beta^2}{c_\beta^2}.
\end{equation}
Define $W:=e^{-X}$ which measures the preserved fraction, so we also have that $1-W$ is the relative size of the gap. A calculation shows that
$     \Eb|\log(1-W)|
$ is finite so we are in the setting of \cite[Section 4]{Gnedin-Iksanov-2012}: using the second choice of centering in
\cite[Theorem~4.1(a)]{Gnedin-Iksanov-2012},
\[
        b_n
        :=
        \frac1m
        \int_0^{\log n}
        \P\bigl(|\log(1-W)|\leq z\bigr)\D z=   \frac1m
        \left(
        \log n-\Eb(|\log(1-W)\wedge\log n)
        \right)
        =
        \frac{\log n}{m}+\bo(1).
\]
Therefore the last theorem and Slutsky's lemma give, for
$m:=\Eb X$ and $\sigma^2:=\Var X$, that
\[
        \frac{
        K_N-m^{-1}\log N
        }{
        \sqrt{\sigma^2m^{-3}\log N}
        }
        \xrightarrow{d}\mathcal N(0,1).
\]
Substituting the expression for $m$ and $\sigma^2$ from \eqref{eq: EX and VarX} and using Lemma~\ref{lem:Ln-occupied-gaps} proves
\eqref{eq:Ln-clt-finite-appendix}.

At $\beta=-1$, the L\'evy density in
\eqref{eq:levy-density-prelim} is
\[
        \lambda_{-1}(x)=\frac1{e^x-1},
\]
so its total mass is infinite. Such cases are treated in \cite[Section 3]{Gnedin-Iksanov-2012} under assumptions on the function
$\Phi$ defined below. Indeed, by \cite[(6.2.3), (6.2.4), and (6.12.1)]{NIST-Handbook} for the last equality,
\[
        \Phi(t)
        :=
        \int_0^\infty
        \left(1-e^{-t(1-e^{-x})}\right)
        \lambda_{-1}(x)\D x
        =
        \int_0^1\frac{1-e^{-ty}}y\D y
       =    \log t+\gamma+\bo(t^{-1}e^{-t}),
\]
where $\gamma$ is Euler's constant. In particular,
$\varphi(t):=\Phi(e^t)\sim t$, so Condition~A in
\cite[Theorem~3.1(a)]{Gnedin-Iksanov-2012} is satisfied with index $1$.
As before, since
\[
        \Eb\xi_1=\mu_{-1},
        \qquad
        \text{and}
        \qquad
        \Var(\xi_1)=\tau_{-1}^2,
\]
the theorem gives
\begin{equation}\label{eq:Ln-critical-intermediate}
        \frac{
        K_n
        -\mu_{-1}^{-1}\displaystyle\int_1^n
        \frac{\Phi(y)}y\D y
        }{
        \Phi(n)
        \sqrt{\tau_{-1}^2\mu_{-1}^{-3}\log n}
        }
        \xrightarrow{d}
        \int_0^1 B(1-y)\D y,
\end{equation}
where $B$ is a Brownian motion. The random variable on the
right-hand side is a centred Gaussian and a calculation shows its variance is $1/3$.
Furthermore, we have seen that
$
        \Phi(n)=\log n + \gamma + \bo(n^{-1}e^{-n}),$
        so
\[
        \int_1^n\frac{\Phi(y)}y\D y
        =
        \frac12(\log n)^2+\gamma\log n+\bo(1).
\]
Substituting in
\eqref{eq:Ln-critical-intermediate}, using
Lemma~\ref{lem:Ln-occupied-gaps}, and applying Slutsky's lemma proves
\eqref{eq:Ln-clt-critical-appendix}.

Finally, a classical integral representation of the Riemann zeta
function, see e.g. \cite[Eq.~(25.5.1)]{NIST-Handbook}, gives, for $p>1$,
\[
        \int_0^\infty\frac{x^{p-1}}{e^x-1}\D x
        =
        \Gamma(p)\zeta(p),
\qquad\text{so }
        \mu_{-1}=\zeta(2)
        \text{ and }
\tau_{-1}^2=2\zeta(3).
\]
\end{proof}
\begin{remark}\label{rem:Ln-power-limit}
For $-2<\beta<-1$, the limit is not Gaussian. Let
$\alpha=-\beta-1\in(0,1)$. Then, as $x\downarrow0$,
\[
\int_x^\infty\lambda_\beta(y)\D y
       =
        \int_x^\infty
        e^{-(\beta+2)y}(1-e^{-y})^\beta\D y\sim
        \frac{x^{-\alpha}}{\alpha}.
\]
Define the exponential functional
\[
        I_{\beta,\alpha}
        :=
        \int_0^\infty e^{-\alpha\xi_t}\D t.
\]
Using the coupling in
Lemma~\ref{lem:Ln-occupied-gaps},
\cite[Corollary~5.2 and Theorem~6.3]{Gnedin-Pitman-Yor-2006}
give, as $n\to\infty$,
\begin{equation}\label{eq:Ln-power-limit}
        n^{-\alpha}L_n
        \longrightarrow
        \frac{\Gamma(1-\alpha)}{\alpha}
        I_{\beta,\alpha},
\qquad\text{almost surely and for all positive integer moments.}
\end{equation}
Note that the moments of exponential functionals can be easily computed
in terms of the Laplace exponent using the recurrence equation; see e.g.
\cite[Equation~(2.2)]{Minchev-Savov-2026} or \cite{Patie-Savov-2018}. In the present case, for every
positive integer $k$,
\[
        \Eb\left[I_{\beta,\alpha}^k\right]
        =
        \frac{k!}
        {\prod_{j=1}^k\phi_\beta(j\alpha)}.
\]
Taking $k=1$ shows directly that the mean in
\eqref{eq:Ln-power-limit} matches with the last line of
\eqref{eq:discrete-height-phase-results}.
\end{remark}

    \bibliography{Bibliography}
    \bibliographystyle{alpha}
            \end{document}